\documentclass[12pt,twoside]{article}
\usepackage{amsmath,amsthm}
\usepackage{amssymb,latexsym}
\usepackage{mathrsfs}
\usepackage{amscd}
\usepackage{enumerate}
\usepackage{color}
\usepackage{cite}
\usepackage{indentfirst}
\usepackage{amssymb}
\usepackage[mathscr]{euscript}
\usepackage{graphicx}
\usepackage{titletoc}
\usepackage{hyperref}

%    Table of contents
\titlecontents{section}[0pt]{\addvspace{3pt}\filright}
              {\contentspush{\thecontentslabel\quad}}
              {}{\titlerule*[8pt]{$\cdot$}\contentspage}

%----------------------------------------------------------------------------
\vfuzz2pt

\headsep 0cm \evensidemargin 0cm \oddsidemargin 0cm \textheight
21cm \textwidth 16cm \topmargin 0pt

\headsep 16pt

\footskip 27pt

%----------------------------------------------------------------------------

\newtheorem{theorem}{Theorem}[section]
\newtheorem{corollary}[theorem]{Corollary}
\newtheorem{lemma}[theorem]{Lemma}
\newtheorem{proposition}[theorem]{Proposition}
\newtheorem{definition}[theorem]{Definition}

\newtheorem{example}[theorem]{Example}

\theoremstyle{definition} \theoremstyle{remark}
\numberwithin{equation}{section}

\linespread{1.1}
%-----------------------------------------------------------------------------

%    Some letters

\newcommand{\iii}{\mathbf{i}}

\newcommand{\uuu}{\mathbf{u}}
\newcommand{\vvv}{\mathbf{v}}
\newcommand{\www}{\mathbf{w}}
\newcommand{\xxx}{\mathbf{x}}
\newcommand{\yyy}{\mathbf{y}}
\newcommand{\zzz}{\mathbf{z}}

%    Diameter
\newcommand{\diam}{{\rm diam}}

%    Edge set
\newcommand{\ed}{\mathfrak{E}}

%    Energy
\newcommand{\en}{\mathcal{E}}

%    Domain
\newcommand{\dom}{\mathcal{D}}

%    Support

%    Displaystyle
\newcommand{\ds}{\displaystyle}

%    Seminorm

%    Harmonicity
\newcommand{\ha}{\mathcal{H}}

\begin{document}

\title{\large{{\bf CRITICAL EXPONENTS OF INDUCED DIRICHLET FORMS ON SELF-SIMILAR SETS}}}

\author{Shi-Lei Kong and Ka-Sing Lau}

\date{}

\maketitle

\abstract{In \cite{KLW}, we studied certain random walks on the hyperbolic graphs $X$ associated with the self-similar sets $K$, and showed that the discrete energy  ${\mathcal E}_X$ on $X$ has an induced energy form ${\mathcal E}_K$ on $K$ that is a Gagliardo-type integral. The domain of ${\mathcal E}_K$ is a Besov space $\Lambda^{\alpha, \beta/2}_{2,2}$ where $\alpha$ is the Hausdorff dimension of $K$ and $\beta$ is a parameter determined by the ``return ratio" of the random walk. In this paper, we study the functional relationship of ${\mathcal E}_X$ and ${\mathcal E}_K$ as well as the associated Besov spaces. In particular, we investigate the critical exponents of the $\beta$ in  $\Lambda^{\alpha, \beta/2}_{2,2}$ in order for ${\mathcal E}_K$ to be a regular Dirichlet form. We provide some criteria to determine the critical exponents through the effective resistance of the random walk on $X$, and make use of certain electrical network techniques to calculate the exponents for some concrete examples.

\renewcommand{\thefootnote}{}

\footnote {{\it 2010 Mathematics Subject Classification}. Primary 28A80, 60J10; Secondary 60J50.}
\footnote {{\it Keywords}: Besov space,  Dirichlet form, hyperbolic graph, Martin boundary, self-similar set,\\
\indent \hspace {0.5cm} reversible random walk.}
\footnote{The research is supported in part by the HKRGC grant and the NNSF of China (no. 11371382).}
\renewcommand{\thefootnote}{\arabic{footnote}}
\setcounter{footnote}{0}

\tableofcontents

\bigskip

\section{Introduction}
\label{sec:1}

Let $(K,\rho,\nu)$ be a metric measure space in which $(K, \rho)$ is a locally compact separable metric space and $\nu$ is an {\it $\alpha$-Ahlfors measure}, i.e., $\nu$ satisfies $\nu (B(x, r)) \asymp r^\alpha$ for any ball $B(x,r)$ with center at $x\in K$ and radius $r \in (0,1)$ (by $f \asymp g$, we mean $f$ and $g$ are positive functions, and $C^{-1} g\leq f \leq Cg$ for some $C>0$). We call such $K$ an $\alpha$-set in the case that $K$  is a compact subset in ${\mathbb R}^d$ with the Euclidian metric \cite{JW}.  The Besov space $\Lambda_{2,2}^{\alpha,\beta/2}$, $\beta>0$ is the Banach space contained in $L^2(K, \nu)$ defined via the norm
\begin{equation} \label{eq1.1}
 \Vert u \Vert_{{\Lambda_{2,2}^{\alpha,\beta/2}}} = \Vert u \Vert_{L^2}+ \big (\iint_{K \times K } \frac{|u(\xi)-u(\eta)|^2} {|\xi-\eta|^{\alpha+\beta}}d\nu(\xi)d\nu(\eta) \big )^{1/2},
\end{equation}
(note that $\nu\times \nu$ vanishes on the diagonal)  where the integral term is called the Gagliardo integral and denoted by ${\mathcal E}^{(\beta)}[u]$. Similarly we define another Besov space $\Lambda_{2,\infty}^{\alpha,\beta/2}$ via
 $$
 \Vert u \Vert_{{\Lambda_{2,\infty}^{\alpha,\beta/2}}}  =\Vert u \Vert_{L^2} +\Big(\sup_{0<r\leq1} r^{-(\alpha+ \beta)}\int_K\int_{B(\eta, r)} |u(\xi)-u(\eta)|^2 d\nu(\xi)d\nu(\eta)\Big)^{1/2}.
 $$
On a classical domain (with $\alpha = d)$ in ${\mathbb R}^d$, it is well-known that  $\Lambda_{2,\infty}^{\alpha,1}$ equals the Sobolev space $W^{1,2}$,  and for $0<\beta<2$, $\Lambda_{2,2}^{\alpha,\beta/2}$ equals the fractional Sobolev space $W^{s,2}$ with $s= \beta/2$ \cite{A}.
It is easy to see that $\Lambda_{2,2}^{\alpha,\beta' /2}\subset \Lambda_{2,\infty}^{\alpha,\beta'/2}  \subset \Lambda_{2,2}^{\alpha,\beta /2}$ for $\beta < \beta'$;
$\Lambda_{2,2}^{\alpha,\beta/2}$ can be trivial for sufficiently large $\beta$. We define a {\it critical exponent} $\beta^*$ of $K$ by
$$
\beta^* = \sup \{\beta>0: \Lambda_{2,2}^{\alpha,\beta/2} \hbox{ contains nonconstant functions}\}.
$$

The Besov spaces $\Lambda_{2,\infty}^{\alpha,\beta/2}$, $\Lambda_{2,2}^{\alpha,\beta/2}$ and the critical exponents play an important role in the study of the Dirichlet forms.  For a classical domain $\Omega$, the standard Dirichlet form $\mathcal E (u, v) = \int_\Omega \nabla u(x) \nabla v(x) dx$ is defined on the domain ${\mathcal D} =\Lambda_{2,\infty}^{\alpha,1} (= W^{1,2})$, and $\beta^* =2$. The theory of Dirichlet forms on a metric measure space was originated in the seminal work of Beurling and Deny \cite {BeD,FOT},  in which a local regular Dirichlet form $(\mathcal E, \mathcal D)$ (if exists) gives a generalization of Laplacian.   In \cite{Jo}, Jonsson showed that the domain $\mathcal D$ of the local regular Dirichlet form on the Sierpi{\'n}ski gasket is the Besov space $\Lambda_{2,\infty}^{\alpha,\beta^*/2}$, where $\alpha = \log 3/ \log 2$ and $\beta^* = \log 5/\log 2 (\approx 2.322$). This consideration was extended by Pietruska-Pa{\l}uba to nested fractals and $\alpha$-sets \cite {P1,P2}.  From the probabilistic point of view, the $\beta^*$ is referred to as the {\it ``walk dimension"}, which is the scaling exponent in the space-time relation of the  diffusion process (Brownian motion) $\{X_t\}$} on the underlying set $K$:  ${\mathbb E}_x(|X_t-x|^2)\approx t^{2/\beta^*}$.  Typically, $\{X_t\}$ has heat kernels that obey the sub-Gaussian estimate:
\begin{equation} \label{eq1.0}
p(t,\xi,\eta) \asymp \frac{1}{t^{\alpha/\beta^*}} \exp\Big(-c\left(\frac{|\xi-\eta|}{t^{1/\beta^*}}\right)^{\frac{\beta^*}{\beta^*-1}}\Big)
\end{equation}
(here the value of $c>0$ varies in the upper and lower bounds). In particular, Barlow and Bass in \cite{BB1,BB2,BB3} proved the above heat kernel estimate of the Brownian motion on the Sierpinski carpet, and a numerical approximation $\beta^* \approx 2.097$ is highlighted in \cite{BB2}.   The estimates \eqref{eq1.0} on nested fractals were also obtained by Kumagai \cite{Ku}, in which he evaluated $\beta^*$ for some specific cases.
 Local regular Dirichlet forms have also been studied in the general setting of metric measure spaces together with the heat kernel estimates (e.g., \cite{GH1, GHL1, GHL2, GHL4}). In particular,  Grigor'yan, Hu and Lau \cite {GHL1} proved that $2\leq  \beta^* \leq \alpha +1$ under the assumption that a sub-Gaussian heat kernel exists together with a chain condition (see also St\'{o}s \cite{St} for the same inequality  on the $\alpha$-sets). However, despite the various developments, there is no effective algorithm to determine the critical exponent $\beta^*$, and it is still an open question whether a Laplacian will exist on some more general fractal sets.

\bigskip

On a classical domain in ${\mathbb R}^d$, the Gagliardo integral ${\mathcal E}^{(\beta)}$ in \eqref{eq1.1} with $0<\beta<2$ defines a non-local regular Dirichlet form that corresponds to a fractional Laplacian and a symmetric $\beta$-stable process.  In \cite {St}, assuming a Brownian motion exists on an $\alpha$-set $K$,  St{\'o}s investigated the same type of non-local Dirichlet forms ${\mathcal E}^{(\beta)}$, $\beta <\beta^*$ from the associated stable-like processes that is subordinate to the Brownian motion, and he showed that the Besov spaces $\Lambda_{2,2}^{\alpha,\beta/2}$ are the domains of ${\mathcal E}^{(\beta)}$. For such processes, the heat kernels were studied in detail by Chen and Kumagai \cite{CK} on an $\alpha$-set with $0<\beta<2$. Recently, there is a considerable interest devoted to the regular non-local Dirichlet forms and the jump processes on metric measure spaces (e.g., \cite{CKW1,GHH,GHL3,HK}).

\bigskip

In \cite{KLW}, we studied the non-local Dirichlet forms with another approach. For a self-similar set $K$ in ${\mathbb R}^d$ with the open set condition (OSC), it is known that there is a hyperbolic graph $(X, {\mathfrak E})$ (augmented tree) on the symbolic space $X$ of $K$, and the hyperbolic boundary and $K$ are H\"older equivalent\cite{Ka, LW1, LW2}. On $(X, {\mathfrak E})$, we introduced a class of transient reversible random walks with return ratio $\lambda \in (0, 1)$ (the conductance $c(\bf x, \bf y)$ depends on $\lambda$), and called them {\it $\lambda$-natural random walks }($\lambda$-NRW) (see Section \ref{sec:2}). The random walk was shown to satisfy the conditions of Ancona's theorem in \cite{An} so that the Martin boundary and the hyperbolic boundary (and hence $K$) are homeomorphic. Moreover, the hitting distribution $\nu$ is the normalized $\alpha$-Hausdorff measure where $\alpha$ is the Hausdorff dimension of $K$. By using a boundary theory of Silverstein on Markov chains \cite{Si}, we proved that the graph energy
  \begin{equation} \label{eq1.2}
  {\mathcal E}^{(\lambda)}_X [f] = \frac 12 {\sum}_{\xxx, \yyy \in X} c(\xxx,\yyy)|f(\xxx) -f(\yyy)|^2
  \end{equation}
  defined by the  $\lambda$-NRW induces a  non-negative definite bilinear form on $K$:
\begin{equation} \label {eq1.3}
\mathcal E_K^{(\beta)}(u,v) \asymp \iint_{K \times K } \frac{(u(\xi)-u(\eta))(v(\xi)-v(\eta))}{|\xi-\eta|^{\alpha+\beta}} d\nu(\xi)d\nu(\eta)
\end{equation}
with $\beta = \log \lambda/ \log r$, where $r$ equals the minimal contraction ratio among the maps in the IFS that generates $K$. Clearly the domain ${\mathcal D}^{(\beta)}_K = \{u \in L^2(K, \mu): {\mathcal E}^{(\beta)}_K [u]< \infty\}$ is the Besov space $\Lambda_{2,2}^{\alpha,\beta/2}$.

\medskip

As we see from the above, unlike the classical case, the  Dirichlet forms in \eqref{eq1.3} can be obtained more directly on general self-similar sets without recourse to the local regular Dirichlet form (i.e., the Laplacian). In this paper we continue the investigation of the induced bilinear functional $\mathcal E_K^{(\beta)}$. We are aiming for a deeper understanding of the boundary theory of this $\lambda$-NRW, in particular on the critical exponents, so as to shed some light on the problem of the existence of Laplacians on the more general fractal sets. We will focus on two issues, namely, to establish the functional relationship of the discrete energy ${\mathcal E}_X^{(\lambda)}$ and the induced ${\mathcal E}_K^{(\beta)}$ where $\beta = \log \lambda/ \log r$, then use it to study the critical exponents of  $\{\Lambda^{\alpha, \beta/2}_{2,2}\}_{\beta>0}$, domains of $\mathcal E_K^{(\beta)}$'s.

 \medskip

 Let ${\mathcal D}_X^{(\lambda)}$ be the domain of ${\mathcal E}_X^{(\lambda)}$, and let ${\mathcal H}{\mathcal D}^{(\lambda)}_X$ be the class of harmonic functions in  ${\mathcal D}_X^{(\lambda)}$. For $u \in {\mathcal D}_K^{(\beta)}$, we use $Hu$ to denote the Poisson integral of $u$ on $X$,  and for  $f \in {\mathcal D}_X^{(\lambda)}$, we let ${\rm Tr}f (\xi) = \lim_{\xxx_n \to \xi} f(\xxx_n)$. By imposing a norm on ${\mathcal D}_X^{(\lambda)}$,  we prove a theorem analogous to the classical trace theorem (see Theorem \ref{th3.6}, Corollaries \ref{th3.6'} and \ref{th3.5}).

\vspace{1mm}

\begin{theorem} \label{th1.1} Suppose $K$ is a self-similar set and assume that the OSC holds. Then for a $\lambda$-NRW with $\lambda \in (0, r^\alpha)$, ${\rm Tr}({\mathcal D}_X^{(\lambda)}) = {\mathcal D}_K^{(\beta)}$. Moreover, ${\rm Tr}: \mathcal {HD}^{(\lambda)}_X \to {\mathcal D}^{(\beta)}_K$ is a Banach space isomorphism, and ${\rm Tr}^{-1} = H$  on ${\mathcal D}^{(\beta)}_K$. (Here $\beta = \log \lambda / \log r$.)
\end{theorem}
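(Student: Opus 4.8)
\emph{Proof plan.} The plan is to deduce Theorem~\ref{th1.1} from three ingredients: a Royden-type orthogonal decomposition of $\dom_X^{(\lambda)}$, an almost-everywhere boundary limit (Fatou-type) theorem for finite-energy functions, and the two-sided energy comparison between the graph energy of a Poisson extension and the Gagliardo integral of its boundary data. I would first record the decomposition. Since the $\lambda$-NRW is transient, the space $\dom_X^{(\lambda)}$ equipped with its energy norm (normalized at a base point to kill constants) splits orthogonally as
\[
\dom_X^{(\lambda)}=\ha\dom^{(\lambda)}_X\ \oplus\ \dom_0,
\]
where $\dom_0$ is the energy-closure of the finitely supported functions and $\ha\dom^{(\lambda)}_X$ is the harmonic part; the projection onto $\ha\dom^{(\lambda)}_X$ is the energy-minimizing harmonic extension of the common boundary data. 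Thus every $f\in\dom_X^{(\lambda)}$ writes uniquely as $f=f_h+f_0$ with $f_h$ harmonic of finite energy and $f_0\in\dom_0$.

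Next, boundary limits. Let $\{Z_n\}$ denote the $\lambda$-NRW started at the root; by transience and the Martin-boundary identification of \cite{KLW}, $Z_n\to Z_\infty\in K$ with hitting distribution $\nu$. For harmonic $f\in\ha\dom^{(\lambda)}_X$ the process $f(Z_n)$ is an $L^2$-bounded martingale, since finiteness of $\en_X^{(\lambda)}[f]$ bounds $\sum_n\mathbb{E}\big[|f(Z_{n+1})-f(Z_n)|^2\big]$; hence it converges almost surely and in $L^2$, producing $\mathrm{Tr}f(\xi)=\lim_{\xxx_n\to\xi}f(\xxx_n)$ for $\nu$-a.e.\ $\xi$. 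I would then show $\mathrm{Tr}f_0=0$ $\nu$-a.e.\ for $f_0\in\dom_0$: this is immediate for finitely supported functions and passes to the energy-closure because $\mathrm{Tr}$ is continuous from $(\dom_0,\en_X^{(\lambda)})$ into $L^2(K,\nu)$. Consequently $\mathrm{Tr}f=\mathrm{Tr}f_h$ depends only on the harmonic part.

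The analytic crux is the two-sided energy estimate
\[
\en_X^{(\lambda)}[Hu]\ \asymp\ \en_K^{(\beta)}[u]\ \asymp\ \iint_{K\times K}\frac{|u(\xi)-u(\eta)|^2}{|\xi-\eta|^{\alpha+\beta}}\,d\nu(\xi)\,d\nu(\eta),\qquad \beta=\tfrac{\log\lambda}{\log r},
\]
which is in essence the induced-form identity \eqref{eq1.3} of \cite{KLW}; here the hypothesis $\lambda<r^\alpha$, i.e.\ $\beta>\alpha$, is precisely what forces the Poisson integral to converge and $\en_X^{(\lambda)}[Hu]$ to be finite. This at once gives that $H\colon\dom_K^{(\beta)}\to\ha\dom^{(\lambda)}_X$ is bounded and that $\mathrm{Tr}$ is bounded from $\ha\dom^{(\lambda)}_X$ into $\dom_K^{(\beta)}$; combined with the energy-minimizing property of $H$, it upgrades the trace bound to $\en_K^{(\beta)}[\mathrm{Tr}f]\le C\,\en_X^{(\lambda)}[f]$ for all $f\in\dom_X^{(\lambda)}$.

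Finally I would assemble the isomorphism. The limit theorem gives $\mathrm{Tr}\circ H=\mathrm{id}$ on $\dom_K^{(\beta)}$; conversely, for harmonic $f$ the difference $f-H(\mathrm{Tr}f)$ is harmonic with zero trace, so it lies in $\ha\dom^{(\lambda)}_X\cap\dom_0=\{0\}$, whence $H\circ\mathrm{Tr}=\mathrm{id}$ on $\ha\dom^{(\lambda)}_X$. Together with the decomposition this yields $\mathrm{Tr}(\dom_X^{(\lambda)})=\mathrm{Tr}(\ha\dom^{(\lambda)}_X)=\dom_K^{(\beta)}$, and the two-sided energy bound makes $\mathrm{Tr}\colon\ha\dom^{(\lambda)}_X\to\dom_K^{(\beta)}$ a bounded bijection with bounded inverse $H$, hence a Banach-space isomorphism. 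The step I expect to be the main obstacle is the $\nu$-a.e.\ existence of the trace together with $\mathrm{Tr}(\dom_0)=0$ — a Fatou-type control of the boundary behavior of possibly unbounded finite-energy functions and the continuity of $\mathrm{Tr}$ on the closure — and, beneath it, the lower bound in the energy comparison (graph energy dominating the Gagliardo integral), the delicate direction in which the augmented-tree geometry and the condition $\beta>\alpha$ genuinely enter.
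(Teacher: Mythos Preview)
Your route is workable but genuinely different from the paper's, and you misplace the role of the hypothesis $\lambda<r^\alpha$. The paper does not use martingale convergence at all: it proves \emph{pointwise} existence of $\mathrm{Tr}f(\xi)=\lim_n f(\xxx_n)$ along every geodesic ray by the elementary estimate $|f(\xxx_{n+1})-f(\xxx_n)|\le (\en_X[f]/c(\xxx_{n+1},\xxx_n))^{1/2}\le C(\lambda/r^\alpha)^{n/2}$, which is exactly where $\lambda<r^\alpha$ enters. This yields a continuous trace directly (Lemma~3.4), and then $H\circ\mathrm{Tr}=\mathrm{id}$ on $\ha\dom_X$ is obtained by the maximum principle rather than by the Royden decomposition. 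The identity $\mathrm{Tr}\circ H=\mathrm{id}$ comes from a uniform Fatou lemma for continuous $u$ (Corollary~3.2) combined with the H\"older embedding $\dom_K^{(\beta)}\hookrightarrow C^{(\beta-\alpha)/2}(K)$ (again using $\beta>\alpha$). By contrast, the energy comparison $\en_X[Hu]\asymp\en_K^{(\beta)}[u]$ from \cite{KLW} holds for all $\lambda\in(0,1)$ and is \emph{not} where $\lambda<r^\alpha$ is needed, contrary to what you write.

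Your probabilistic argument can be made to work, but two points need tightening. First, the step ``$f-H(\mathrm{Tr}f)$ is harmonic with zero trace, hence lies in $\ha\dom_X\cap\dom_0$'' presupposes that zero trace forces membership in $\dom_0$, which you have not established; it is cleaner to note that $L^2$-martingale convergence of $f(Z_n)$ gives $f(\xxx)=\mathbb E_{\xxx}[\mathrm{Tr}f(Z_\infty)]=H(\mathrm{Tr}f)(\xxx)$ directly. Second, since the statement defines $\mathrm{Tr}$ via geodesic-ray limits, your $\nu$-a.e.\ limit along $\{Z_n\}$ must be reconciled with that definition; this is fine once $\beta>\alpha$ because $\mathrm{Tr}f\in\dom_K^{(\beta)}\subset C(K)$ has a unique continuous representative, but you should say so. What the paper's approach buys is a deterministic, uniform trace with explicit rate (used repeatedly later for the effective-resistance estimates in Sections~4--5); what your approach buys is a softer argument that would survive without the explicit conductance asymptotics, at the cost of only an a.e.\ trace and an extra identification step.
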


\medskip

The condition  $\lambda \in (0, r^\alpha)$ in Theorem \ref{th1.1} will be used throughout the paper.  It implies that $\beta > \alpha$, and functions in ${\mathcal D}^{(\beta)}_K$ are H\"older continuous (Proposition \ref{th2.4}); moreover, the convergence rate  $(\lambda/r^\alpha)^n$ is essential when we consider functions in ${\mathcal D}^{(\lambda)}_X$ that tend to the boundary $K$.

\medskip

To consider the critical exponent of ${\mathcal D}^{(\beta)}_K$,   we introduce some finer classification of the domains. We let
\begin{align*}
\beta_1^*:= & \sup \{\beta > 0: {\mathcal D}^{(\beta)}_K \cap C(K) \hbox{ is dense in } C(K)\}, \\
\beta_2^*:=& \sup \{\beta>0: \dim {\mathcal D}^{(\beta)}_K = \infty\} \\
\beta_3^*:= & \sup \{\beta > 0: {\mathcal D}^{(\beta)}_K \hbox{ contains nonconstant functions} \},
\end{align*}
Clearly we have $2 \leq \beta_1^* \leq \beta_2^* \leq \beta_3^* \leq \infty$, and $\beta_3^* = \beta^*$ for the $\beta^*$ defined previously.  In the standard cases, these three exponents are equal, but there are also examples that they are different \cite {GuL}.  We will discuss these exponents and to provide some criteria to determine them. Our approach relies on the effective resistance. We use $R^{(\lambda)}(\xi, \eta)$ to denote the {\it limiting resistance} for $\xi, \eta \in K$ (see Section \ref{sec:4}), and note that the infinite word $i^\infty$ of $\{S_i\}_{i=1}^N$ will represent an element in $K$.

\medskip

\begin{theorem} \label{th1.2}  With the assumptions as in Theorem \ref{th1.1}, the domain ${\mathcal D}^{(\beta)}_K$ consists of only constant functions if and only if $R^{(\lambda)}(i^\infty, j^\infty) =0 $ for all $i, j = 1,\cdots, N$.

\vspace {0.1cm}

Consequently, (i) if we let $\lambda^*_3 = \sup \{\lambda>0: \ R^{(\lambda)}(i^\infty, j^\infty) =0,  \   1\leq i,j \leq N\}$, then $\beta^*_3 = \log  \lambda^*_3/ \log r$; \,(ii) if $\beta_3^* > \alpha$ and $K$ is connected, then $\beta_2^* = \beta_3^*$ .
\end{theorem}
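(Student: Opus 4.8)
The plan is to prove the main equivalence by bridging the limiting resistance to the existence of finite-energy functions that separate boundary points, and then to exploit self-similarity to reduce everything to the corner points $i^\infty$. The bridge I would establish (using Theorem \ref{th1.1} together with the energy-minimizing characterization of effective resistance) is
\[
R^{(\lambda)}(\xi,\eta)=0 \iff u(\xi)=u(\eta)\ \text{ for every } u\in\mathcal{D}^{(\beta)}_K .
\]
Indeed, $R^{(\lambda)}(\xi,\eta)>0$ exactly when some finite-energy harmonic function on $X$ separates $\xi,\eta$, and via the isomorphism $\mathrm{Tr}$/$H$ of Theorem \ref{th1.1} such a function corresponds to a $u\in\mathcal{D}^{(\beta)}_K$ with $u(\xi)\neq u(\eta)$ (since $\lambda<r^\alpha$ forces $\beta>\alpha$, these $u$ are H\"older by Proposition \ref{th2.4} and have genuine point values). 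Granting the bridge, the direction ``$\mathcal{D}^{(\beta)}_K$ constant $\Rightarrow$ all $R^{(\lambda)}(i^\infty,j^\infty)=0$'' is immediate.

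For the converse I would show that vanishing of all corner resistances forces every $u\in\mathcal{D}^{(\beta)}_K$ to be constant. The first ingredient is the exact self-similar scaling of the Gagliardo integral: substituting $\xi=S_w\xi'$, using $|\xi-\eta|=r_w|\xi'-\eta'|$ and $\nu\circ S_w=r_w^\alpha\,\nu$, one gets $\mathcal{E}^{(\beta)}_K[u\circ S_w]\asymp r_w^{\beta-\alpha}\iint_{K_w\times K_w}|u(\xi)-u(\eta)|^2|\xi-\eta|^{-(\alpha+\beta)}\,d\nu\,d\nu\le Cr_w^{\beta-\alpha}\mathcal{E}^{(\beta)}_K[u]<\infty$, so $u\circ S_w\in\mathcal{D}^{(\beta)}_K$ for every finite word $w$. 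Applying the bridge (at the \emph{top} corners $i^\infty,j^\infty$) to $v:=u\circ S_w$ shows $u$ takes a common value $c_w$ on all corners $\{S_w(i^\infty)\}_i$ of each cell $K_w$. I would then propagate with the fixed-point identity $S_i(i^\infty)=i^\infty$: the top-corner value $v(i^\infty)=c_w$ equals the $K_i$-subcell corner value $v(S_i(i^\infty))=c_{wi}$, whence $c_{wi}=c_w$ and, by induction, $c_w=c_\emptyset$ for all $w$. As $\{S_w(i^\infty)\}$ is dense in $K$ and $u$ is continuous, $u$ is constant.

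For consequence (i) I would transport the equivalence through the decreasing bijection $\lambda\mapsto\beta(\lambda)=\log\lambda/\log r$. The Besov nesting $\Lambda^{\alpha,\beta'/2}_{2,2}\subset\Lambda^{\alpha,\beta/2}_{2,2}$ for $\beta<\beta'$ makes ``$\mathcal{D}^{(\beta)}_K$ contains a nonconstant function'' downward closed in $\beta$, hence holding precisely for $\beta<\beta^*_3$; its negation is, by the main equivalence, exactly the condition defining $\lambda^*_3$. Thus the set $\{\lambda:R^{(\lambda)}(i^\infty,j^\infty)=0\ \forall i,j\}$ is $(0,r^{\beta^*_3})$ up to its endpoint, so $\lambda^*_3=r^{\beta^*_3}$ and $\beta^*_3=\log\lambda^*_3/\log r$.

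For consequence (ii), since $\beta^*_2\le\beta^*_3$ always, it suffices to prove $\dim\mathcal{D}^{(\beta)}_K=\infty$ for every $\beta\in(\alpha,\beta^*_3)$. The key point is that for $\beta>\alpha$ the space $\mathcal{D}^{(\beta)}_K$ is a subalgebra of $C(K)$: its elements are bounded, and from $|uv(\xi)-uv(\eta)|^2\le 2\|u\|_\infty^2|v(\xi)-v(\eta)|^2+2\|v\|_\infty^2|u(\xi)-u(\eta)|^2$ one gets $\mathcal{E}^{(\beta)}_K[uv]\le 2\|u\|_\infty^2\mathcal{E}^{(\beta)}_K[v]+2\|v\|_\infty^2\mathcal{E}^{(\beta)}_K[u]<\infty$. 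Given a nonconstant $u\in\mathcal{D}^{(\beta)}_K$ (available as $\beta<\beta^*_3$) and $K$ connected, $u(K)$ is a nondegenerate interval, so the powers $1,u,u^2,\dots$ lie in $\mathcal{D}^{(\beta)}_K$ and are linearly independent, giving $\dim\mathcal{D}^{(\beta)}_K=\infty$; letting $\beta\uparrow\beta^*_3$ (possible since $\beta^*_3>\alpha$) yields $\beta^*_2\ge\beta^*_3$, hence equality. The main obstacle I anticipate is making the resistance--separation bridge rigorous: identifying the boundary object $R^{(\lambda)}$ on $X$ with the separation property on $K$ requires controlling the convergence $f(\xi|_n)\to\mathrm{Tr}f(\xi)$ well enough to preserve the extremal role of harmonic functions, and this is precisely where $\lambda<r^\alpha$ and the H\"older regularity of Proposition \ref{th2.4} are used.
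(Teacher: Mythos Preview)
Your argument for the main equivalence and for consequence (i) is essentially the paper's: the ``bridge'' you state is precisely Corollary~\ref{th4.3'}, and your self-similarity reduction (showing $u\circ S_w\in\mathcal{D}^{(\beta)}_K$ and then propagating corner values via $S_i(i^\infty)=i^\infty$) is the contrapositive of the paper's proof of Theorem~\ref{th4.4}. You correctly identify that the nontrivial work lies in establishing the bridge, which the paper does through Theorem~\ref{th4.1} (existence of the limiting resistance) and Theorem~\ref{th4.2} (construction of the energy minimizer $u_{\Phi,\Psi}$); your sketch of this via Theorem~\ref{th1.1} is on the right track but would need that machinery fleshed out.

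Your proof of consequence (ii), however, is genuinely different from and more elementary than the paper's. The paper (Theorem~\ref{th4.6}) proceeds by an iterative construction: starting from one nonconstant energy minimizer $u_1=u_{\xi,\eta}$, it uses connectedness to find an intermediate point $\eta_1$ with $u_1(\eta_1)=1/2$, then invokes the technical Lemma~\ref{th4.5} to show $R(\xi,\{\eta,\eta_1\})>0$ and produce a second minimizer $u_2$, and so on, yielding an infinite sequence $\{u_n\}$ whose evaluation matrix $[u_i(\eta_j)]$ is upper triangular with diagonal $1/2$. Your route instead exploits that for $\beta>\alpha$ the space $\mathcal{D}^{(\beta)}_K$ is an algebra (via the H\"older embedding and the elementary product inequality), so the powers $1,u,u^2,\dots$ of a single nonconstant $u$ already furnish infinitely many independent elements, connectedness ensuring $u(K)$ is a nondegenerate interval. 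This bypasses Lemma~\ref{th4.5} entirely and is both shorter and conceptually cleaner; the paper's approach, on the other hand, produces functions with prescribed boundary behavior, which may be of independent use.
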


\medskip
The theorem is proved in Theorems \ref{th4.4} and \ref{th4.6}. The main idea is that  the condition $R^{(\lambda)}(\cdot,\cdot)=0$ on   the finite set $\{i^\infty\}$ implies that it equals zero on a dense subset in $K$, and this leads to an infinite effective conductance on the dense subset. Then the continuity of $u \in {\mathcal D}^{(\beta)}_K$ implies that $u$ can only be a constant function.

\medskip

For $\beta_1^*$, we have a result on the post critically-finite (p.c.f.) sets \cite{Ki1}. We let $V_0$ denote the ``boundary" of $K$.

\medskip

\begin{theorem} \label{th1.3}
If in addition, $K$ is a p.c.f.~set and satisfies another mild geometric condition (see Theorem \ref{th4.9}). Then if
$$
R^{(\lambda-\epsilon)}(\xi, \eta)>0, \qquad \forall \ \xi \neq \eta \in V_0,
$$
for some $0<\epsilon <\lambda$, then $\dom_K^{(\beta)}$ is dense in $C(K)$ with supremum norm.

\vspace {0.1cm}
Consequently, if
$
\lambda_1^*:= \inf \{\lambda>0: R^{(\lambda)}(\xi, \eta)>0, \ \forall \, \xi \neq \eta \in V_0\} \in (0, r^\alpha),
$
then $\beta_1^* = \log \lambda_1^* / \log r$.
\end{theorem}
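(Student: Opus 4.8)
The plan is to establish sup-norm density by approximating an arbitrary $f\in C(K)$ with splines $u_m$ adapted to the cell structure $\{K_w\}_{|w|=m}$, spending the slack between $\lambda-\epsilon$ and $\lambda$ to keep their Gagliardo energy finite at parameter $\beta$. Set $\beta'=\log(\lambda-\epsilon)/\log r$; since $\lambda-\epsilon<\lambda$ and $\log r<0$ we have $\beta'>\beta>\alpha$, so by Proposition \ref{th2.4} functions in $\dom_K^{(\beta')}$ are Hölder of exponent $\gamma:=(\beta'-\alpha)/2>0$, and $\dom_K^{(\beta')}\subset\dom_K^{(\beta)}$ with $\mathcal{E}_K^{(\beta)}[u]\le\diam(K)^{\beta'-\beta}\,\mathcal{E}_K^{(\beta')}[u]$. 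First I would record that $\dom_K^{(\beta')}$ is an algebra: for bounded $u,v$ the Leibniz bound $\mathcal{E}_K^{(\beta')}[uv]\le 2\|u\|_\infty^2\mathcal{E}_K^{(\beta')}[v]+2\|v\|_\infty^2\mathcal{E}_K^{(\beta')}[u]$ holds, and these functions are continuous, hence bounded. Consequently the restriction of $\dom_K^{(\beta')}$ to the finite set $V_0$ is a subalgebra of $\mathbb{R}^{V_0}$ containing the constants, and the hypothesis $R^{(\lambda-\epsilon)}(\xi,\eta)>0$ for all distinct $\xi,\eta\in V_0$ says exactly that it separates the points of $V_0$; by the finite-dimensional Stone–Weierstrass theorem it is therefore all of $\mathbb{R}^{V_0}$. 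Thus every datum $g\colon V_0\to\mathbb{R}$ admits a finite-$(\beta')$-energy extension $h_g\in\dom_K^{(\beta')}\subset\dom_K^{(\beta)}$ with $h_g|_{V_0}=g$; replacing $h_g$ by its truncation to $[\min_{V_0}g,\max_{V_0}g]$ (which does not increase Gagliardo energy and fixes the $V_0$-values) I may also assume $\min_{V_0}g\le h_g\le\max_{V_0}g$.

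Given $f\in C(K)$ I would set $u_m|_{K_w}=h_{g_w}\circ S_w^{-1}$ on each $m$-cell, where $g_w:=(f\circ S_w)|_{V_0}$ pulls back the vertex values of $f$. As $K$ is p.c.f., distinct $m$-cells meet only along the finite junction set inside $V_m$, where the prescribed values agree, so $u_m\in C(K)$. The truncation bound gives, for $x\in K_w$ and any vertex $v\in S_w(V_0)$, $|u_m(x)-f(x)|\le|u_m(x)-f(v)|+|f(v)-f(x)|\le 2\,\mathrm{osc}_{K_w}f$, whence $\|f-u_m\|_\infty\le 2\max_{|w|=m}\mathrm{osc}_{K_w}f\to0$ as $m\to\infty$ by uniform continuity of $f$ on the compact set $K$. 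It then remains only to verify that $u_m\in\dom_K^{(\beta)}$, i.e. $\mathcal{E}_K^{(\beta)}[u_m]<\infty$, for each fixed $m$.

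I would split $\mathcal{E}_K^{(\beta)}[u_m]$ into within-cell and cross-cell contributions. The within-cell part equals $\sum_{|w|=m}r_w^{\alpha-\beta}\mathcal{E}_K^{(\beta)}[h_{g_w}]$ by the self-similar scaling $\mathcal{E}_K^{(\beta)}|_{K_w}[u]=r_w^{\alpha-\beta}\mathcal{E}_K^{(\beta)}[u\circ S_w]$ with $r_w=r_{w_1}\cdots r_{w_m}$, a finite sum of finite terms. For non-adjacent cells $K_w,K_{w'}$ the kernel is bounded, so the corresponding integral is at most $(2\|u_m\|_\infty)^2\,\mathrm{dist}(K_w,K_{w'})^{-(\alpha+\beta)}\nu(K_w)\nu(K_{w'})<\infty$. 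The only delicate term comes from adjacent cells sharing a junction point $p$: there $\xi\in K_w$ and $\eta\in K_{w'}$ both approach $p$, and using the Hölder-$\gamma$ regularity one has $|u_m(\xi)-u_m(\eta)|^2\lesssim\mathrm{dist}(\xi,p)^{\beta'-\alpha}+\mathrm{dist}(\eta,p)^{\beta'-\alpha}$. Together with the Ahlfors regularity of $\nu$ and the non-tangential contact $|\xi-\eta|\asymp\mathrm{dist}(\xi,p)+\mathrm{dist}(\eta,p)$ furnished by the geometric condition of Theorem \ref{th4.9}, the junction integral reduces to $\int_0 s^{\beta'-\beta-1}\,ds$, which converges precisely because $\beta'>\beta$. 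This junction estimate is the main obstacle, and it is exactly where the slack from passing to $\lambda-\epsilon$ is consumed: the extra Hölder regularity inherited from the strictly smaller space $\dom_K^{(\beta')}$ is what tames the singular Gagliardo kernel across the junctions. Summing over the finitely many adjacent pairs yields $\mathcal{E}_K^{(\beta)}[u_m]<\infty$, completing the density assertion.

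For the consequence, the density just proved holds whenever $R^{(\lambda-\epsilon)}>0$ on $V_0$ for some $\epsilon\in(0,\lambda)$, i.e. whenever $\lambda-\epsilon>\lambda_1^*$; letting $\epsilon\downarrow0$ this covers every $\lambda\in(\lambda_1^*,r^\alpha)$, and since $\beta=\log\lambda/\log r$ reverses order this is $\beta\in(\alpha,\ \log\lambda_1^*/\log r)$, giving $\beta_1^*\ge\log\lambda_1^*/\log r$. Conversely, if $\lambda<\lambda_1^*$ then by definition of the infimum some pair $\xi\neq\eta\in V_0$ has $R^{(\lambda)}(\xi,\eta)=0$, and the resistance bound $|u(\xi)-u(\eta)|^2\lesssim R^{(\lambda)}(\xi,\eta)\,\mathcal{E}_K^{(\beta)}[u]$ from Section \ref{sec:4} forces $u(\xi)=u(\eta)$ for every $u\in\dom_K^{(\beta)}$; hence $\dom_K^{(\beta)}$ cannot separate $\xi$ from $\eta$ and is not dense in $C(K)$, so $\beta_1^*\le\log\lambda_1^*/\log r$. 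Combining the two bounds yields $\beta_1^*=\log\lambda_1^*/\log r$ under the stated hypothesis $\lambda_1^*\in(0,r^\alpha)$.
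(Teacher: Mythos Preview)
Your proof is correct and reaches the same conclusion as the paper's, but the route differs in two places. First, to realise arbitrary boundary data on $V_0$, the paper invokes Lemma~\ref{th4.8} (an inductive resistance estimate giving $R^{(\lambda-\varepsilon)}(\xi_i,V_0\setminus\{\xi_i\})>0$) together with Theorem~\ref{th4.2} to manufacture a Kronecker basis $\{u_i\}$ on $V_0$; you sidestep this by observing that $\dom_K^{(\beta')}$ is an algebra containing constants whose restriction to the finite set $V_0$ separates points, so finite Stone--Weierstrass gives $\dom_K^{(\beta')}|_{V_0}=\ell(V_0)$ directly. Second, the paper climbs through $V_0\subset V_1\subset\cdots$ by an induction that spends a geometric fraction of the slack at each step (working in $\Lambda_{2,2}^{\alpha,\beta_n/2}$ with $\beta_n=\log(\lambda-\varepsilon/2^n)/\log r$ and applying Lemma~\ref{th4.10} once per level); you instead build the level-$m$ spline in one shot and spend the full slack $\beta'>\beta$ in a single junction estimate across all $m$-cells simultaneously. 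Your junction integral is precisely the computation in Lemma~\ref{th4.10}, transported by self-similarity to level $m$ (legitimate since condition~(H) is stated at all levels and (C) propagates under the similitudes). The net effect: your argument is shorter and avoids both the auxiliary Lemma~\ref{th4.8} and the inductive bookkeeping, while the paper's version isolates Lemmas~\ref{th4.8} and~\ref{th4.10} as standalone tools reusable elsewhere. The treatment of the consequence via Corollary~\ref{th4.3} matches the paper's.
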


\medskip

A challenging task is to determine the limiting resistance $R^{(\lambda)}(i^\infty,j^\infty)$ (or $R^{(\lambda-\epsilon)}(\xi, \eta)$ for $\xi, \eta \in V_0$) to be $=0$ or $>0$ in the above theorems. For this we make use of the basic tools in the electrical network theory (series and parallel laws, $\Delta$-Y transform, as well as cutting and shorting) for such estimation. We provide some special cases as examples.

\bigskip

For the organization of the paper, in Section \ref{sec:2}, we summarize the needed results  from \cite{KLW}. In Section \ref{sec:3}, we prove some basic results on the limits of functions in ${\mathcal D}^{(\lambda)}_X$ as well as the extension of functions in ${\mathcal D}^{(\beta)}_K$ via the Poisson integral,  and prove Theorem \ref{th1.1}. We define and justify the limiting  resistance in Section \ref{sec:4}, and prove Theorems \ref{th1.2} and \ref{th1.3} in Section \ref{sec:5}. In Section \ref{sec:6}, we make use of the electrical techniques to give some implementations of the theorems by some examples. Some remarks and open problems are provided in Section \ref{sec:7}.

\bigskip

\section{Preliminaries}
\label{sec:2}

\noindent  We will give a brief summary of the background results in \cite{KLW} for the convenience of the reader, and all the unexplained notations can be found there. Let $\{S_i\}_{i=1}^N$, $N\geq 2$, be an {\it iterated function system} (IFS) of contractive similitudes on ${\mathbb R}^d$ with contraction ratios $\{r_i\}_{i=1}^N$, and let $K$ be the {\it self-similar set}.  Let $\Sigma^*$ be the symbolic space of $K$. Let $r = \min \{r_i : i = 1, \cdots, N\}$. For $n\geq 1$, define
\begin{equation}
\mathcal{J}_n = \{\xxx = i_1\cdots i_k \in \Sigma^\ast: r_\xxx \leq r^n < r_{i_1 \cdots i_{k-1}}\},
\end{equation}
and $\mathcal{J}_0 = \{\vartheta\}$ by convention.  Consider the {\it modified symbolic space}  $X = \bigcup_{n=0}^\infty\mathcal{J}_n$, which has a tree structure with a set ${\mathfrak E}_v$ of vertical edges.  The tree can be strengthened to a more structural hyperbolic graph by adding  horizontal edges  according to the neighboring cells on each level $n$ \cite{Ka, LW1, LW2}.  According to \cite{LW2}, we define
$$
{\mathfrak E}_h = \bigcup_{n=0}^\infty \{(\xxx, \yyy) \in \mathcal{J}_n \times  \mathcal{J}_n: \xxx \neq \yyy, \, \inf_{\xi,\eta \in K} |S_\xxx(\xi)-S_\yyy(\eta)| \leq \gamma \cdot r^n\},
$$
where $\gamma>0$ is arbitrary but fixed. Let ${\mathfrak E} = {\mathfrak E}_v \cup {\mathfrak E}_h$, and call $(X, {\mathfrak E})$ an {\it augmented tree}, coined  by Kaimanovich in \cite{Ka}.
It was shown that $(X, \mathfrak E)$ is a hyperbolic graph in the sense of Gromov \cite {Wo1}. In this case, the lengths of horizontal geodesics are uniformly bounded, and for any $\xxx, \yyy \in X$, the canonical geodesic $[\xxx, \uuu, \vvv, \yyy]$ consists of three segments, where $[\xxx, \uuu], [\vvv, \yyy]$ are vertical paths in ${\mathfrak E}_v$, and $[\uuu, \vvv]$ is a horizontal geodesic in ${\mathcal J}_\ell$ with the smallest $\ell$.  Using this geodesic, the {\it Gromov product} $(\xxx | \yyy)$ has a simple and useful geometric interpretation:
$$
(\xxx | \yyy) = \ell - h/2,
 $$
where $h$ is the length of $[\uuu, \vvv]$ and $h$ is uniformly bounded. For some $a>0$, there is a {\it Gromov metric} $\rho_a$ on $X$ such that $\rho_a (\xxx, \yyy) \asymp e^{-a(\xxx | \yyy)}$ for all $\xxx \neq \yyy$. Let ${\widehat X}_H$ be the completion of $(X,\rho_a)$, and define the {\it hyperbolic boundary} $\partial_HX = {\widehat X}_H \setminus X$. Then $(\partial_HX,\rho_a)$ is a compact metric space.

\medskip
A {\it geodesic ray}  $(\xxx_n)_{n=0}^\infty $ is a sequence of words with $\xxx_n = i_1 i_2 \cdots i_{k(n)} \in {\mathcal J}_n$. If $\xi \in \partial_HX$ has a canonical representation $i_1i_2 \cdots \in \Sigma^\infty$, then  $(\xxx_n)_n$ converges to $\xi$, and $\xi \in S_{\xxx_n} (K)$ for all $n$. It follows that for any other geodesic ray $(\yyy_n)_n$ converging to $\xi$, we have $\xxx_n \sim_h \yyy_n$.
In the sequel,  we will make use of the geodesic rays frequently to relate functions on $X$ and $K$.  We call the sequence $\{\kappa_n\}_{n=0}^\infty$ a {\it $\kappa$-sequence} if each $\kappa_n$ is a selection map from $K$ to $\mathcal J_n$, such that for each $\xi \in K$, $(\kappa_n(\xi))_{n=0}^\infty$ is a geodesic ray converging to $\xi$.
It follows from the above that

\medskip
\begin{lemma} \label{th2.0} For any IFS $\{S_i\}_{i=1}^N$, let $(X, {\mathfrak E})$ be the hyperbolic graph as defined above. Let $E$ be a closed subset $K$. Then for any two $\kappa$-sequences $\{\kappa_n\}_{n=0}^\infty$ and $\{\kappa'_n\}_{n=0}^\infty$, we have
$$
\kappa'_n (E) \subset \{\xxx \in {\mathcal J}_n: d(\xxx, \kappa_n(E))\leq 1\} \quad \hbox {for each}\  n ,
$$
where $d(\cdot, \cdot)$ is the graph metric on $(X,\mathfrak E)$.
\end{lemma}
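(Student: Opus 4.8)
The plan is to reduce the asserted set inclusion to a pointwise comparison at each $\xi \in E$, and then to read off the bound directly from the defining inequality of the horizontal edges $\mathfrak{E}_h$. Fixing an arbitrary $\xi \in E$, I would compare the two level-$n$ selections $\kappa_n(\xi)$ and $\kappa'_n(\xi)$ by showing that both of the associated cells $S_{\kappa_n(\xi)}(K)$ and $S_{\kappa'_n(\xi)}(K)$ contain the common point $\xi$; the horizontal edge condition will then force their graph distance to be at most $1$.

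The first step is to argue that the geodesic ray $(\kappa_n(\xi))_n$ must be a vertical path. Since $\kappa_n(\xi) \in \mathcal{J}_n$ for every $n$, consecutive terms lie on consecutive levels; as the only edges joining distinct levels of $X$ are the vertical ones in $\mathfrak{E}_v$, a length-minimizing ray from the root $\vartheta$ to $\kappa_n(\xi) \in \mathcal{J}_n$ must use exactly $n$ vertical steps and no horizontal ones, so $\kappa_n(\xi)$ is the parent of $\kappa_{n+1}(\xi)$ in the tree. Hence the cells $S_{\kappa_n(\xi)}(K)$ are nested and decreasing with $\diam S_{\kappa_n(\xi)}(K) \asymp r^n \to 0$, and their intersection is the single point to which the ray converges, namely $\xi$; this gives $\xi \in S_{\kappa_n(\xi)}(K)$, and the identical argument applied to $(\kappa'_n(\xi))_n$ gives $\xi \in S_{\kappa'_n(\xi)}(K)$. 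The decisive step is then immediate: because both cells contain $\xi$,
$$
\inf_{\zeta, \eta \in K} | S_{\kappa_n(\xi)}(\zeta) - S_{\kappa'_n(\xi)}(\eta) | = 0 \leq \gamma\, r^n ,
$$
so by the definition of $\mathfrak{E}_h$ either $\kappa_n(\xi) = \kappa'_n(\xi)$ or $(\kappa_n(\xi), \kappa'_n(\xi)) \in \mathfrak{E}_h$. In either case $d(\kappa'_n(\xi), \kappa_n(\xi)) \leq 1$, and since $\kappa_n(\xi) \in \kappa_n(E)$ we get $d(\kappa'_n(\xi), \kappa_n(E)) \leq 1$. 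As $\xi \in E$ was arbitrary, this is exactly the claimed inclusion $\kappa'_n(E) \subset \{\xxx \in \mathcal{J}_n : d(\xxx, \kappa_n(E)) \leq 1\}$.

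I do not expect a serious obstacle; the only point demanding care is the verticality of the ray, which I would justify via the level stratification $\{\mathcal{J}_n\}$ recalled above, or equivalently by invoking the already-stated fact that any geodesic ray converging to $\xi$ is horizontally equivalent to the canonical representation of $\xi$, whose cells contain $\xi$. It is worth emphasizing that the \emph{sharp} bound $1$, rather than a merely bounded horizontal separation, is produced precisely by the exact coincidence at $\xi$: the two cells share a single point, so the infimum above vanishes and triggers a direct horizontal edge instead of a longer horizontal geodesic. Finally, the hypothesis that $E$ is closed plays no role in this argument, since the estimate is entirely pointwise in $\xi$; it is presumably recorded here for use in the later sections.
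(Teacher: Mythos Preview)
Your argument is correct and is essentially the approach the paper has in mind: the lemma is stated immediately after the sentence ``It follows that for any other geodesic ray $(\yyy_n)_n$ converging to $\xi$, we have $\xxx_n \sim_h \yyy_n$,'' and your proof simply unpacks that sentence pointwise in $\xi\in E$. One small remark: in this paper a \emph{geodesic ray} is by definition of the form $\xxx_n=i_1\cdots i_{k(n)}\in\mathcal J_n$ for a fixed infinite word, so verticality and the containment $\xi\in S_{\kappa_n(\xi)}(K)$ are built in; you can cite this directly rather than rederive it from level-counting.
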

\medskip

\begin{theorem} \label{th2.1} \hspace{-2mm} {\rm \cite{Ka, LW2}} For any IFS $\{S_i\}_{i=1}^N$, let $(X, {\mathfrak E})$ be the hyperbolic graph as defined above. Then  the hyperbolic boundary is H\"older equivalent to the self-similar set $K$, i.e., for the canonical map $\iota: \partial_HX \to K$,
$$
\rho_a(\xi, \eta) (\asymp e^{-a(\xxx | \yyy)}) \asymp |\iota (\xi) - \iota(\eta)|^{- a/\log r}.
$$
\end{theorem}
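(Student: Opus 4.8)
The final statement is Theorem \ref{th2.1}, which asserts the Hölder equivalence of the hyperbolic boundary $\partial_H X$ and the self-similar set $K$. My plan is to establish the two-sided estimate $\rho_a(\xi,\eta) \asymp |\iota(\xi)-\iota(\eta)|^{-a/\log r}$ by relating the Gromov product $(\xxx|\yyy)$ of two geodesic rays to the Euclidean geometry of the cells $S_\xxx(K)$ and $S_\yyy(K)$ they pass through. Since $\rho_a(\xi,\eta) \asymp e^{-a(\xi|\eta)}$ by construction of the Gromov metric, and $-a/\log r = a/\log(1/r) > 0$ (recall $r<1$), the claim reduces to showing
$$
e^{-(\xi|\eta)} \asymp |\iota(\xi)-\iota(\eta)|^{1/\log(1/r)},
$$
i.e., taking logarithms, $(\xi|\eta) \asymp \frac{-1}{\log r}\log|\iota(\xi)-\iota(\eta)|$, up to additive constants.

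First I would fix geodesic rays $(\xxx_n)_n$ and $(\yyy_n)_n$ converging to $\xi$ and $\eta$ respectively, with $\xxx_n,\yyy_n \in \mathcal J_n$, and use the geometric interpretation $(\xxx_n|\yyy_n) = \ell - h/2$ recorded just before the theorem, where $\ell$ is the level at which the canonical geodesic makes its horizontal crossing and $h$ is the (uniformly bounded) length of that crossing. The key point is that the confluent level $\ell$ is, up to a bounded additive error, the largest level $n$ at which the cells containing $\iota(\xi)$ and $\iota(\eta)$ are still horizontally close, i.e., still neighbors in the sense defining $\mathfrak E_h$. Because $h$ is uniformly bounded, $(\xi|\eta)$ equals this confluent level $\ell$ up to a bounded constant, so it suffices to identify $\ell$ with $\frac{-1}{\log r}\log|\iota(\xi)-\iota(\eta)|$ up to bounded error.

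Next I would exploit the definition of $\mathcal J_n$ together with the horizontal-edge condition $\inf_{\xi,\eta}|S_\xxx(\xi)-S_\yyy(\eta)|\le \gamma r^n$. For $\xxx,\yyy\in\mathcal J_n$, the diameters and mutual distances of the cells $S_\xxx(K), S_\yyy(K)$ are comparable to $r^n$ (by the definition of $\mathcal J_n$ and the OSC-controlled bounded overlap), so two points $\iota(\xi)\in S_{\xxx_n}(K)$ and $\iota(\eta)\in S_{\yyy_n}(K)$ satisfy $|\iota(\xi)-\iota(\eta)| \asymp r^\ell$ precisely at the confluent level $\ell$: below $\ell$ the cells are horizontally linked and hence within $O(r^n)$, while above $\ell$ they have separated, forcing $|\iota(\xi)-\iota(\eta)| \gtrsim r^\ell$. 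Taking $\log$ gives $\log|\iota(\xi)-\iota(\eta)| \asymp \ell \log r$, hence $\ell \asymp \frac{-1}{\log r}\log|\iota(\xi)-\iota(\eta)|$, which combined with $(\xi|\eta)=\ell+O(1)$ closes the argument.

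The main obstacle is the careful verification that the confluent level $\ell$ really is comparable to the separation scale of the two boundary points, in both directions and uniformly in $\xi,\eta$. The upper bound (cells close $\Rightarrow$ points close) is immediate from the horizontal-edge definition, but the lower bound requires that once two cells cease to be horizontally adjacent their images are genuinely separated by order $r^\ell$, which relies on the bounded-neighbor property of the augmented tree under the OSC and on the uniform boundedness of horizontal geodesic lengths (so that non-adjacency at one level propagates to a definite Euclidean gap). Handling the additive $O(1)$ errors—coming from the bounded $h$, from the choice among different geodesic rays (controlled by Lemma \ref{th2.0}), and from the comparability constants in the cell-diameter estimates—and absorbing them into the $\asymp$ after exponentiation is the technical heart; fortunately these are all genuinely bounded, so the final multiplicative comparison survives. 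This is precisely the content established in \cite{Ka, LW2}, so I would cite those for the detailed constants while presenting the confluence-to-separation dictionary above as the conceptual skeleton.
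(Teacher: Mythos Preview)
The paper does not prove Theorem \ref{th2.1}; it is quoted in the preliminaries with a citation to \cite{Ka, LW2}, so there is no ``paper's own proof'' to compare against. Your sketch is the standard confluence-to-separation argument used in those references and is conceptually sound: identify $(\xi|\eta)$ with the confluent level $\ell$ up to $O(1)$, and then show $|\iota(\xi)-\iota(\eta)|\asymp r^\ell$.

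One caution: the theorem as stated here is for \emph{any} IFS of similitudes, without assuming the OSC, so your appeal to ``OSC-controlled bounded overlap'' is not available in this generality. The bounded-degree and bounded-horizontal-geodesic properties of the augmented tree (which are what you actually need for the lower bound) are established in \cite{LW1,LW2} directly from the definition of $\mathcal J_n$ and $\mathfrak E_h$ via a volume/packing argument that does not require the OSC; you should cite that rather than the OSC. Otherwise your outline matches the approach in the cited sources.
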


\medskip

Throughout this paper, we will always assume that the IFS $\{S_i\}_{i=1}^N$ satisfies the {\it open set condition} (OSC) \cite{Fa}. In this case, the self-similar set $K$ has Hausdorff dimension $\alpha$ which is uniquely determined by  ${\sum}_{i=1}^N r_i^\alpha = 1$.

\medskip

In \cite{KLW}, we introduced a class of reversible random walks on the augmented tree $(X,\mathfrak E)$: for $\lambda \in (0,1)$, we set the {\it conductance} $c: \mathfrak E \to (0,\infty)$ such that
\begin{equation} \label{eq2.1}
c(\xxx,\xxx^-) = r_\xxx^\alpha \lambda^{-|\xxx|}, \quad \hbox {and} \ \   c(\xxx,\yyy) \asymp r_\xxx^\alpha \lambda^{-|\xxx|}, \ \  \xxx \sim_h \yyy \in X \setminus \{\vartheta\},
\end{equation}
where $\xxx^-$ is the parent of $x$, $r_\xxx := r_{i_1} \cdots r_{i_m}$ for $\xxx=i_1 \cdots i_m$. (For example, for the Sierpinski gasket, $r^\alpha = 1/3$, and $ c(\xxx,\xxx^-)= (3\lambda)^{-|\xxx|}$.) We define the {\it natural random walk} with return ratio $\lambda \in (0,1)$ ($\lambda$-NRW) to be the Markov chain $\{Z_n\}_{n=0}^\infty$ on $X$ with transition probability
$P(\xxx,\yyy)=c(\xxx,\yyy) / m(\xxx)$ if $\xxx \sim \yyy$, and $0$ otherwise,
 where $m(\xxx)=\sum_{\yyy: \xxx \sim \yyy} c(\xxx,\yyy)$ is the {\it total conductance} at $\xxx \in X$.
Note that the random walk has a return ratio $\lambda \in (0, 1)$  with respect to the vertical direction; hence $\{Z_n\}_{n=0}^\infty$ is transient. Let ${\mathcal M}$ denote the Martin boundary, and let $Z_\infty$ be the $\mathcal M$-valued random variable as the limit of $\{Z_n\}_{n=0}^\infty$.

\begin{theorem} \label{th2.2} \hspace{-2mm} {\rm \cite{KLW}} Let $\{S_i\}_{i=1}^N$  be an IFS satisfying the open set condition, and let $\{Z_n\}_{n=0}^\infty$ be a $\lambda$-NRW. Then
\begin{enumerate}
\item[(i)] the Martin boundary ${\mathcal M}$, the hyperbolic boundary $\partial_HX$ and the self-similar set $K$ are all homeomorphic;

\item[(ii)] the Martin kernel $K(\xxx, \xi) \asymp \lambda^{|\xxx| -(\xxx|\xi)} r^{-\alpha(\xxx|\xi)}$;

\item[(iii)] the distribution $\nu$ of $Z_\infty$ on ${\mathcal M}$ equals the normalized $\alpha$-Hausdorff measure on $K$ when $Z_0 = \vartheta$.
\end{enumerate}
\end{theorem}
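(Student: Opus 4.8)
The plan is to treat the three parts in sequence, with (ii) and (iii) resting on the Green-function analysis developed for (i). For part (i), Theorem \ref{th2.1} already supplies a homeomorphism $\iota \colon \partial_H X \to K$, so it suffices to identify the Martin boundary $\mathcal M$ with the hyperbolic boundary $\partial_H X$. I would obtain this from Ancona's theorem on hyperbolic graphs, reducing the task to checking that the $\lambda$-NRW meets its hypotheses: (a) \emph{bounded range} --- immediate, since every edge of $\mathfrak E$ joins vertices at graph distance one; (b) \emph{uniform irreducibility} --- from $c(\xxx,\yyy) \asymp r_\xxx^\alpha \lambda^{-|\xxx|}$ and $m(\xxx) = \sum_{\yyy \sim \xxx} c(\xxx,\yyy)$ one reads off that $P(\xxx,\yyy) = c(\xxx,\yyy)/m(\xxx)$ is bounded below by a positive constant along edges; and (c) the \emph{spectral radius} condition $\rho(P) < 1$. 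The first two are routine, so the work concentrates on (c).

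For (c) I would exhibit a uniform downward conductance bias. At a vertex $\xxx$ the conductance toward the parent is $c(\xxx,\xxx^-) = r_\xxx^\alpha \lambda^{-|\xxx|}$, while the conductances toward its children sum to $\sum_i r_\xxx^\alpha r_i^\alpha \lambda^{-|\xxx|-1} = r_\xxx^\alpha \lambda^{-|\xxx|-1}$ by $\sum_i r_i^\alpha = 1$; thus the down/up conductance ratio is the constant $\lambda^{-1} > 1$. Together with the bounded vertex degree (horizontal geodesics have uniformly bounded length) and the comparability of horizontal conductances, this bias should yield a strong isoperimetric inequality for the reversible $\lambda$-NRW and hence a spectral gap $\rho(P) < 1$. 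With $\rho(P)<1$ in force, Ancona's inequality $G(\xxx,\yyy) \asymp G(\xxx,\www)\,G(\www,\yyy)$ holds for $\www$ on a geodesic $[\xxx,\yyy]$, and Ancona's theorem gives $\mathcal M \cong \partial_H X$; composing with $\iota$ proves (i).

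For part (ii) I would combine Ancona's inequality with the confluence structure of the canonical geodesic $[\xxx,\uuu,\vvv,\yyy]$. Writing $K(\xxx,\xi) = \lim_{\yyy \to \xi} G(\xxx,\yyy)/G(\vartheta,\yyy)$ and taking $\www$ to be a confluence vertex at level $\approx (\xxx\,|\,\xi)$, the inequality collapses the kernel to $K(\xxx,\xi) \asymp G(\xxx,\www)/G(\vartheta,\www)$. Using $G(\cdot,\www) = F(\cdot,\www)\,G(\www,\www)$ with $F$ the hitting probability, the diagonal term cancels, leaving $K(\xxx,\xi) \asymp F(\xxx,\www)/F(\vartheta,\www)$. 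I would then estimate the two first-passage factors: climbing the $|\xxx| - (\xxx\,|\,\xi)$ levels from $\xxx$ up to $\www$ runs against the drift and costs a factor $\asymp \lambda$ per level (the gambler's-ruin probability $p/q = \lambda$), giving $F(\xxx,\www) \asymp \lambda^{|\xxx|-(\xxx\,|\,\xi)}$, while descending from $\vartheta$ to the single level-$(\xxx\,|\,\xi)$ cell $\www$ accumulates the branching weights $\prod_j r_{i_j}^\alpha$, giving $F(\vartheta,\www) \asymp r_\www^\alpha \asymp (r^\alpha)^{(\xxx\,|\,\xi)}$. Assembling these yields $K(\xxx,\xi) \asymp \lambda^{|\xxx|-(\xxx\,|\,\xi)} r^{-\alpha(\xxx\,|\,\xi)}$.

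For part (iii) I would identify $\nu$ as a self-similar measure. The decisive observation is that transition probabilities are scale invariant: passing to the subtree rooted at $i$ multiplies every conductance by the single constant $r_i^\alpha \lambda^{-1}$ (since $c(i\yyy,(i\yyy)^-) = (r_i^\alpha\lambda^{-1})\, c(\yyy,\yyy^-)$), so the $\lambda$-NRW on that subtree is an isomorphic copy of the whole walk. Hence the hitting distribution conditioned on $Z_\infty \in S_i(K)$ is $(S_i)_*\nu$, giving the self-similar identity $\nu = \sum_i \nu(S_i(K))\,(S_i)_*\nu$ and the multiplicativity $\nu(S_\xxx(K)) = \prod_j \nu(S_{i_j}(K))$. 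Part (ii) gives $\nu(S_\xxx(K)) \asymp r_\xxx^\alpha$; testing this along $\xxx = i^m$ forces the weight $\nu(S_i(K)) = r_i^\alpha$, and by uniqueness of the self-similar measure with weights $r_i^\alpha$ (which, as $\sum_i r_i^\alpha = 1$ and OSC holds, is the normalized $\alpha$-Hausdorff measure) we conclude $\nu = \mathcal H^\alpha/\mathcal H^\alpha(K)$. The main obstacle, on which (ii) and (iii) both depend, is step (c) and the resulting Ancona inequality on the augmented tree: the horizontal edges prevent geodesics from being purely vertical, so establishing the spectral gap and the multiplicativity of $G$ in the presence of bounded horizontal detours is the genuine technical core.
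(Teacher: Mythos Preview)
The paper does not prove Theorem~\ref{th2.2}; it is quoted from \cite{KLW} as a background result (see the opening of Section~\ref{sec:2} and the citation on the theorem itself). So there is no ``paper's own proof'' to compare against here. That said, the paper does indicate in the introduction that the proof in \cite{KLW} proceeds exactly as you outline for part~(i): one verifies that the $\lambda$-NRW satisfies the hypotheses of Ancona's theorem \cite{An}, and then the identification $\mathcal M \cong \partial_H X \cong K$ follows. Your sketch of (ii) via Ancona's multiplicative inequality and hitting-probability estimates along the canonical geodesic is also the natural route and matches the spirit of the estimates the present paper imports (e.g.\ the use of \cite[Theorem~4.6]{KLW} in the proof of Corollary~\ref{th3.5}, which gives $F_n(\vartheta,\xxx)=r_\xxx^\alpha$ exactly).

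One point to be careful with is your argument for (iii). You claim that the walk restricted to the subtree rooted at $i$ is an isomorphic copy of the whole walk after rescaling conductances by $r_i^\alpha\lambda^{-1}$. On the bare tree $(X,\mathfrak E_v)$ this is true, but on the augmented tree $(X,\mathfrak E)$ there are horizontal edges joining $S_i(K)$-cells to $S_j(K)$-cells for $j\neq i$, so the subgraph rooted at $i$ is not invariant and the conditioned walk is not literally a copy of the original. Your self-similarity identity $\nu(S_\xxx(K))=\prod_j \nu(S_{i_j}(K))$ therefore needs justification beyond the rescaling observation. The exact relation $F_n(\vartheta,\xxx)=r_\xxx^\alpha$ from \cite{KLW} (used in this paper's Corollary~\ref{th3.5}) suggests that in \cite{KLW} the identification of $\nu$ proceeds through such an exact hitting formula rather than through a purely structural self-similarity argument; that would sidestep the horizontal-edge issue entirely.
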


\medskip

We will fix $\lambda \in (0,1)$, and when there is no confusion,  we will omit the superscripts of $\lambda$ and $\beta (:=\log \lambda/\log r)$ in the involved notations on $X$ and $K$.  It follows from part (i) that we can carry  Doob's discrete potential theory onto the self-similar set $K$. We denote the space of {\it harmonic} functions (w.r.t.~$P$) on $X$ by $\ha(X) = \{f \in \ell(X): Pf= f\}$, where $\ell(X)$ is the collection of real functions on $X$, and $Pf(\xxx) = \sum_{\yyy \in X} P(\xxx,\yyy)f(\yyy)$. The Poisson integral for $u \in L^1 (K, \nu)$ is
\begin{equation} \label{eq2.2'}
Hu(\cdot) = \int_K K(\cdot, \xi)u(\xi)d\nu(\xi) \in \ha(X).
\end{equation}
The {\it graph energy} of $f \in \ell(X)$ is given by
\begin{equation} \label{eq2.2}
\en_X[f] = \dfrac{1}{2}\sum_{\xxx,\yyy \in X: \xxx \sim \yyy} c(\xxx,\yyy)(f(\xxx)-f(\yyy))^2,
\end{equation}
and the domain of $\en_X$ is $\dom_X = \{f \in \ell(X): \en_X[f] < \infty\}$. Using Theorem \ref{th2.2} together with Silverstein's approach on the Na{\"i}m kernel $\Theta(\xi,\eta)$ on $K$ \cite{Si}, we obtain an induced quadratic form on $K$ as follows.

\medskip
\begin{theorem} \label{th2.3} \hspace{-2mm} {\rm \cite {KLW}} Under the assumptions in Theorem \ref{th2.2}, the graph energy in \eqref{eq2.2}  induces an energy form $\en_K[u]:= \en_X[Hu] $ given by
\begin{equation} \label{eq2.3}
\en_K[u] =\frac{m(\vartheta)}{2}\iint_{K \times K} |u(\xi)-u(\eta)|^2 \Theta(\xi,\eta)d\nu(\xi)d\nu(\eta), \quad u \in L^2(K,\nu),
\end{equation}
where $\Theta(\xi,\eta) \asymp (\lambda r^\alpha)^{-(\xi|\eta)} \asymp |\xi-\eta|^{-(\alpha+\beta)}$ with $\beta = \frac{\log \lambda}{\log r}$.
\end{theorem}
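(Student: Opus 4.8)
The plan is to reduce the statement to a computation of the energy of harmonic functions and then invoke Silverstein's boundary theory \cite{Si} to realize that energy as a Douglas-type integral against the Na\"im kernel. Since $\en_K[u]=\en_X[Hu]$ and $Hu\in\ha(X)$ by \eqref{eq2.2'}, the entire content lies in expressing $\en_X[Hu]$ through the boundary data $u$. First I would use the linearity of the Poisson integral to write
$$
Hu(\xxx)-Hu(\yyy)=\int_K\big(K(\xxx,\xi)-K(\yyy,\xi)\big)u(\xi)\,d\nu(\xi),
$$
substitute into \eqref{eq2.2}, and interchange the (finite-energy) sum with the integrals to obtain $\en_X[Hu]=\iint_{K\times K}\Psi(\xi,\eta)\,u(\xi)u(\eta)\,d\nu(\xi)d\nu(\eta)$, where
$$
\Psi(\xi,\eta)=\tfrac12\sum_{\xxx\sim\yyy}c(\xxx,\yyy)\big(K(\xxx,\xi)-K(\yyy,\xi)\big)\big(K(\xxx,\eta)-K(\yyy,\eta)\big)
$$
is the mutual energy of the Martin kernels $K(\cdot,\xi)$ and $K(\cdot,\eta)$.

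The heart of the argument is to pass from this symmetric bilinear form to the squared-difference form. Here I would exploit the normalization $H1=1$ (the Poisson integral reproduces the constant, since $\int_K K(\xxx,\xi)\,d\nu(\xi)=1$ by Theorem \ref{th2.2}(iii)), so that $\en_X[H1]=0$ forces a mass condition tying the diagonal part of $\Psi$ to the row sums $\int_K\Theta(\xi,\eta)\,d\nu(\eta)$. Combined with the symmetry $\Psi(\xi,\eta)=\Psi(\eta,\xi)$, this is exactly the input of Silverstein's energy decomposition: off the diagonal the mutual-energy kernel is a negative multiple of the Na\"im kernel $\Theta$, and the diagonal (killing) contribution is absorbed, yielding
$$
\en_X[Hu]=\frac{m(\vartheta)}{2}\iint_{K\times K}|u(\xi)-u(\eta)|^2\,\Theta(\xi,\eta)\,d\nu(\xi)d\nu(\eta),
$$
with $\Theta(\xi,\eta)=\lim_{\xxx\to\xi}\lim_{\yyy\to\eta}G(\xxx,\yyy)/\big(G(\vartheta,\xxx)G(\vartheta,\yyy)\big)$, where $G$ is the Green function of the $\lambda$-NRW. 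This identification, together with the precise constant $m(\vartheta)$ and the verification that no separate interior-killing term survives, is the step I expect to be the main obstacle, since it requires carrying Silverstein's decomposition over to the present reversible walk on the augmented tree.

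It then remains to estimate $\Theta$. Taking the inner limit produces the Martin kernel, so $\Theta(\xi,\eta)=\lim_{\xxx\to\xi}K(\xxx,\eta)/G(\vartheta,\xxx)$. I would establish the Green estimate $G(\vartheta,\xxx)\asymp\lambda^{|\xxx|}$ from the vertical return ratio $\lambda$, and insert the Martin kernel bound $K(\xxx,\eta)\asymp\lambda^{|\xxx|-(\xxx|\eta)}r^{-\alpha(\xxx|\eta)}$ from Theorem \ref{th2.2}(ii). Since $(\xxx|\eta)\to(\xi|\eta)$ as $\xxx\to\xi$, this gives
$$
\Theta(\xi,\eta)\asymp\lim_{\xxx\to\xi}(\lambda r^\alpha)^{-(\xxx|\eta)}=(\lambda r^\alpha)^{-(\xi|\eta)},
$$
which is manifestly symmetric in $\xi,\eta$, as it must be; the symmetry is a useful consistency check on the Green estimate.

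Finally I would convert the Gromov product into the Euclidean metric via the H\"older equivalence of Theorem \ref{th2.1}: writing $e^{-(\xi|\eta)}\asymp|\xi-\eta|^{-1/\log r}$ and raising to the power $\log(\lambda r^\alpha)$ gives $(\lambda r^\alpha)^{-(\xi|\eta)}\asymp|\xi-\eta|^{-\log(\lambda r^\alpha)/\log r}$. Since $\log(\lambda r^\alpha)/\log r=\alpha+\log\lambda/\log r=\alpha+\beta$, this is precisely $\Theta(\xi,\eta)\asymp|\xi-\eta|^{-(\alpha+\beta)}$, completing the identification. Minor technical points to verify along the way are the integrability justifying the interchange of sum and integral in the first step, and the almost-everywhere existence of the iterated limits defining $\Theta$.
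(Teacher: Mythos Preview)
This theorem is not proved in the present paper: it is quoted from \cite{KLW} as a preliminary result, and the only indication of method is the sentence ``Using Theorem \ref{th2.2} together with Silverstein's approach on the Na{\"i}m kernel $\Theta(\xi,\eta)$ on $K$ \cite{Si}, we obtain an induced quadratic form on $K$ as follows.'' Your outline follows precisely this indicated route --- Silverstein's Douglas-integral representation of $\en_X[Hu]$ in terms of the Na{\"i}m kernel, combined with the Martin kernel estimate of Theorem \ref{th2.2}(ii) and the H\"older equivalence of Theorem \ref{th2.1} --- so there is no methodological divergence to discuss.

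One small cosmetic point: in the final metric conversion it is cleaner to use the relation $|\xi-\eta|\asymp r^{(\xi|\eta)}$ (which the paper uses elsewhere, e.g.\ in the proof of Lemma \ref{th3.4}) directly, rather than passing through base $e$; then $(\lambda r^\alpha)^{-(\xi|\eta)}=r^{-(\alpha+\beta)(\xi|\eta)}\asymp|\xi-\eta|^{-(\alpha+\beta)}$ in one step.
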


\bigskip
The domain of $\en_K$ is  $\dom_K = \{u \in L^2(K,\nu): Hu \in \dom_X\}$ \cite{Si}.
It follows from $\en_K[u]:= \en_X[Hu]$ that $\dom_K$ also equals  $\{u \in L^2(K,\nu): {\mathcal E}_K(u) < \infty \}$.
Hence ${\mathcal D}_K$ is the Besov space ${\Lambda_{2,2}^{\alpha,\beta/2}}$. If we define $\Vert u\Vert_{{\mathcal E}_K}^2 = {\mathcal E}_K [u] + \Vert u \Vert^2_{L^2(K, \nu)}$, then $({\mathcal D}_K, \Vert \cdot\Vert_{{\mathcal E}_K})$ is a Banach space, and is equivalent to ${\Lambda_{2,2}^{\alpha,\beta/2}}$.
 For $\gamma >0$, we let
\begin{equation} \label{eq2.8}
C^\gamma (K)=\{u \in C(K): \Vert u \Vert_{C^\gamma}:=\Vert u \Vert_\infty + {\rm esssup}_{\xi,\eta \in K} \frac{|u(\xi)-u(\eta)|}{|\xi-\eta|^\gamma}<\infty\}
\end{equation}
denote the H\"older space. We will use the following result frequently. It was proved in {\rm \cite{GHL1}} (the assumption of heat kernel stated there is not needed in the proof) that

\medskip

\begin{proposition} \label{th2.4}
If $\beta>\alpha$, then for all $u \in L^2(K,\nu)$,
\begin{equation} \label{eq2.9}
\Vert u \Vert_{C^\gamma} \leq C\Vert u \Vert_{\Lambda_{2,2}^{\alpha,\beta/2}}
\end{equation}
with $\gamma = (\beta-\alpha)/2$. Consequently, $\Lambda_{2,2}^{\alpha,\beta/2} \hookrightarrow C^\gamma$ is an imbedding.
\end{proposition}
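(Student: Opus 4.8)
The plan is to establish this as a Morrey--Sobolev type embedding, running the standard telescoping argument over dyadic concentric balls, adapted to the $\alpha$-Ahlfors measure $\nu$. Throughout, write $u_B = \nu(B)^{-1}\int_B u\,d\nu$ for the average of $u$ over a ball $B$, and abbreviate $I := {\mathcal E}^{(\beta)}[u]$ for the Gagliardo integral in \eqref{eq1.1}, so that the seminorm of $\Lambda_{2,2}^{\alpha,\beta/2}$ is $\sqrt I$.

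The core estimate is a bound on the oscillation of these averages. For $\xi\in K$ and $r\in(0,1)$ set $B_k = B(\xi, 2^{-k}r)$, so $B_{k+1}\subset B_k$. Since $u_{B_{k+1}}-u_{B_k} = \nu(B_{k+1})^{-1}\nu(B_k)^{-1}\iint_{B_{k+1}\times B_k}(u(\zeta)-u(\eta))\,d\nu(\eta)\,d\nu(\zeta)$, taking absolute values and applying Cauchy--Schwarz against the normalized product measure on $B_{k+1}\times B_k$ gives
\[
|u_{B_{k+1}} - u_{B_k}|^2 \le \frac{1}{\nu(B_{k+1})\nu(B_k)} \iint_{B_{k+1}\times B_k} |u(\zeta)-u(\eta)|^2\, d\nu(\zeta)\,d\nu(\eta).
\]
On $B_{k+1}\times B_k$ one has $|\zeta-\eta|\le C\,2^{-k}r$, so inserting the factor $|\zeta-\eta|^{-(\alpha+\beta)}$ costs at most $(C2^{-k}r)^{\alpha+\beta}$; combined with the Ahlfors bound $\nu(B_k)\asymp(2^{-k}r)^\alpha$ this yields $|u_{B_{k+1}}-u_{B_k}|\le C(2^{-k}r)^{(\beta-\alpha)/2}\sqrt I$. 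Here the hypothesis $\beta>\alpha$ enters decisively: with $\gamma=(\beta-\alpha)/2>0$ the bounds are summable in $k$ as a geometric series, giving
\[
|u(\xi)-u_{B(\xi,r)}| \le C r^{\gamma}\sqrt I \qquad \text{for } \nu\text{-a.e. } \xi,
\]
where the limit $u_{B_k}\to u(\xi)$ holds at Lebesgue points and pins down the continuous representative of $u$.

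For the H\"older seminorm, given $\xi\ne\eta$ with $\rho=|\xi-\eta|$ small, I would compare $u(\xi)$ and $u(\eta)$ through a chain of averages: over $B(\xi,2\rho)$, over $B(\eta,2\rho)$, and over the common sub-ball $B(\xi,\rho)$, which is contained in both of the first two. The oscillation estimate controls $|u(\xi)-u_{B(\xi,2\rho)}|$ and $|u(\eta)-u_{B(\eta,2\rho)}|$, while a one-step version of the same Cauchy--Schwarz/Ahlfors bound controls the differences of averages of the comparable balls; adding the four terms gives $|u(\xi)-u(\eta)|\le C\rho^{\gamma}\sqrt I$, i.e.\ the essential-supremum H\"older bound of \eqref{eq2.8}. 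For the sup-norm, and for pairs with $\rho$ bounded below, I would take $r\ge\diam K$, so that $B(\xi,r)=K$ and $u_{B(\xi,r)}=u_K$ is the constant mean, with $|u_K|\le\nu(K)^{-1/2}\|u\|_{L^2}$ by Cauchy--Schwarz; the oscillation estimate then yields $\|u\|_\infty\le C(\sqrt I+\|u\|_{L^2})$. Combining the three pieces gives $\|u\|_{C^{\gamma}}\le C\|u\|_{\Lambda_{2,2}^{\alpha,\beta/2}}$, and the embedding follows.

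The constant-tracking in the Cauchy--Schwarz and Ahlfors steps is routine. The main subtlety is that $u$ is only an $L^2$-class: the oscillation estimate must be read as producing a genuinely continuous representative, via $\nu$-a.e.\ convergence of ball averages at Lebesgue points, after which the pointwise H\"older bound is a legitimate essential-supremum bound. A secondary point needing care is the range of validity of $\nu(B(x,r))\asymp r^\alpha$ for radii up to $\diam K$ (the Ahlfors hypothesis is stated for $r\in(0,1)$); it is precisely in closing the sup-norm estimate at such large radii that the $\|u\|_{L^2}$ term, rather than $\sqrt I$ alone, must enter.
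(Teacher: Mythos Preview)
Your argument is correct: this is the standard Morrey--Campanato telescoping proof of the H\"older embedding on an Ahlfors regular space, and the steps (Cauchy--Schwarz on double integrals over nested balls, insertion of the weight $|\zeta-\eta|^{-(\alpha+\beta)}$, Ahlfors lower bound on $\nu(B_k)$, geometric summation when $\beta>\alpha$, Lebesgue differentiation to pin down the continuous representative) are all in order. The caveats you flag about the continuous representative and the top-scale Ahlfors bound are the right ones and are handled appropriately.

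Note, however, that the paper does not supply its own proof of this proposition: it simply quotes the result from \cite{GHL1} (remarking that the heat-kernel hypothesis there is not actually used). So there is no ``paper's proof'' to compare against beyond that citation. Your write-up is essentially the argument one finds in \cite{GHL1} and in standard treatments of Morrey-type embeddings on doubling spaces, so there is no substantive divergence.
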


\bigskip

It follows that for $\alpha < \beta <\beta^*_1$, ${\mathcal D}_K \cap C(K)= {\mathcal D}_K $ is trivially dense in ${\mathcal D}_K$  under the norm $\Vert \cdot \Vert_{{\mathcal E}_K}$, and in $C(K)$ under the supremum norm. This implies that $({\mathcal E}_K, {\mathcal D}_K)$ is a non-local regular  Dirichlet form.

\section{Harmonic functions and trace functions}
\label{sec:3}

\noindent In this section, we will set up a natural relation between the finite energy  harmonic functions on $X$  and  the finite induced energy functions on $K$ (Theorem \ref{th3.6}).
First we use Theorem \ref{th2.2}(ii) to provide a ``uniform tail estimate" of the Martin kernel. As in \cite[Section 5]{KLW}, we introduce a projection $\iota: X \to K$ by selecting $\iota(\xxx) \in S_\xxx(O \cap K)$ arbitrarily, where $O$ is an open set in the OSC satisfying $O \cap K \neq \emptyset$.

\medskip

\begin{proposition}\label{th3.1}
Let $\{S_i\}_{i=1}^N$ be an IFS satisfying the OSC, and let $\{Z_n\}_{n=0}^\infty$ be a $\lambda$-NRW on the augmented tree $(X,\mathfrak E)$. Then for any $\epsilon, \delta>0$, there exists a positive integer $n_0$ such that for any $\xxx \in X$ and $|\xxx| \geq n_0$, $K(\xxx, \xi) \leq \varepsilon$
 for any $\xi \in K \setminus B(\iota(\xxx), \delta)$.
\end{proposition}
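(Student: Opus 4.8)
The plan is to read the size of the Martin kernel directly off Theorem~\ref{th2.2}(ii) and to show that $K(\xxx,\xi)$ can be large only when the Gromov product $(\xxx\,|\,\xi)$ is comparable to $|\xxx|$, which forces $\xi$ to sit near $\iota(\xxx)$. Rewriting the estimate of Theorem~\ref{th2.2}(ii),
\[
K(\xxx,\xi) \asymp \lambda^{|\xxx|-(\xxx|\xi)} r^{-\alpha(\xxx|\xi)} = \lambda^{|\xxx|}(\lambda r^\alpha)^{-(\xxx|\xi)},
\]
and since $\lambda r^\alpha<1$ the factor $(\lambda r^\alpha)^{-(\xxx|\xi)}$ is increasing in $(\xxx\,|\,\xi)$. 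Hence it suffices to bound $(\xxx\,|\,\xi)$ from above, uniformly in $\xxx$, whenever $\xi$ stays a fixed distance $\delta$ away from $\iota(\xxx)$.

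The heart of the matter is the geometric claim: there is a constant $M=M(\delta)>0$, independent of $\xxx$, such that $(\xxx\,|\,\xi)\le M$ whenever $|\iota(\xxx)-\xi|\ge\delta$ and $|\xxx|$ is large enough that $\diam S_\xxx(K)\le\delta/2$. To prove it I would choose a boundary point $\zeta$ whose canonical geodesic ray passes through $\xxx$, so that $\zeta\in S_\xxx(K)$ and hence $|\iota(\xxx)-\zeta|\le\diam S_\xxx(K)\asymp r^{|\xxx|}$. Because $\xxx$ lies on a geodesic ray to $\zeta$, the product $(\xxx\,|\,\zeta)$ is comparable to $|\xxx|$, and the general bound $(\xxx\,|\,\xi)\le|\xxx|$ (read off from $(\xxx\,|\,\yyy)=\ell-h/2$) together with Gromov hyperbolicity of $(X,\mathfrak E)$ \cite{Wo1} yields $(\xxx\,|\,\xi)\le(\zeta\,|\,\xi)+C$ for a constant $C$ depending only on the hyperbolicity constant. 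Now apply the H\"older equivalence of Theorem~\ref{th2.1} to the boundary points $\zeta,\xi$: it gives $(\zeta\,|\,\xi)=\log(1/|\zeta-\xi|)/\log(1/r)+O(1)$. Finally the triangle inequality $|\zeta-\xi|\ge|\iota(\xxx)-\xi|-\diam S_\xxx(K)\ge\delta-\delta/2=\delta/2$ bounds $(\zeta\,|\,\xi)$, and therefore $(\xxx\,|\,\xi)$, by a constant $M(\delta)$ that depends only on $\delta$ and the fixed structural constants.

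Granting the claim, I would combine the two ingredients: for $\xi\in K\setminus B(\iota(\xxx),\delta)$ and $|\xxx|$ large,
\[
K(\xxx,\xi)\le C_1\,\lambda^{|\xxx|}(\lambda r^\alpha)^{-M(\delta)}.
\]
Since $\lambda\in(0,1)$, the right-hand side tends to $0$ as $|\xxx|\to\infty$, so one can pick $n_0=n_0(\varepsilon,\delta)$ large enough to guarantee both $\diam S_\xxx(K)\le\delta/2$ and $C_1\lambda^{n_0}(\lambda r^\alpha)^{-M(\delta)}\le\varepsilon$ for all $|\xxx|\ge n_0$; this gives $K(\xxx,\xi)\le\varepsilon$ on $K\setminus B(\iota(\xxx),\delta)$, as required.

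The step I expect to be the main obstacle is the geometric claim, namely passing from the vertex--boundary product $(\xxx\,|\,\xi)$ to the Euclidean separation $|\iota(\xxx)-\xi|$ with constants independent of $\xxx$. The delicate point is that $(\xxx\,|\,\xi)$ is capped at roughly $|\xxx|$, so Theorem~\ref{th2.1} (stated only for pairs of boundary points) cannot be applied to $\xxx$ directly; introducing a boundary point $\zeta$ beneath $\xxx$ and using the hyperbolic comparison $(\xxx\,|\,\xi)\approx\min(|\xxx|,(\zeta\,|\,\xi))$ is what reconciles the two estimates while keeping all constants uniform.
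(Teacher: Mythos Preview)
Your proposal is correct and follows essentially the same route as the paper: both start from the Martin kernel estimate of Theorem~\ref{th2.2}(ii), reduce to bounding $(\xxx\,|\,\xi)$ when $|\iota(\xxx)-\xi|\ge\delta$, and finish via the H\"older equivalence of Theorem~\ref{th2.1}. The only difference is that the paper obtains the key comparison $(\xxx\,|\,\xi)\le(\iota(\xxx)\,|\,\xi)$ in one line by citing \cite[Lemma~3.7(ii)]{KLW}, whereas you recover the same bound (up to an additive constant) by introducing an auxiliary boundary point $\zeta\in S_\xxx(K)$ and invoking Gromov hyperbolicity---slightly more work, but with the same content.
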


\begin{proof}
It follows from Theorem \ref{th2.2}(ii) that
$$
K(\xxx, \xi) \leq C_1\lambda^{|\xxx|}(\lambda r^\alpha)^{-(\xxx|\xi)},  \quad  \xxx \in X, \ \xi \in K.
$$
Note that $(\xxx|\xi) \leq (\iota(\xxx)|\xi)$ by \cite[Lemma 3.7(ii)]{KLW}. Hence for $\xi \in K \setminus B(\iota(\xxx), \delta)$,
$$
r^{-(\xxx|\xi)} \leq r^{-(\iota(\xxx)|\xi)} \leq C_2 |\iota(\xxx)-\xi|^{-1} \leq C_2 \delta^{-1}
$$
(the second inequality follows from Theorem \ref{th2.1}).
Hence for $\varepsilon>0$, we can pick a large integer $n_0$ such that the last inequality in the following holds:
$$
K(\xxx, \xi) \leq C_1 \lambda^{n_0} r^{-(\alpha+\log \lambda / \log r)(\xxx | \xi)} \leq C_1\lambda^{n_0} (C_2 \delta^{-1})^{\alpha+\log \lambda / \log r} \leq \varepsilon.
$$
\end{proof}

\medskip

Let $\nu_\xxx$, $\xxx \in X$, denote the hitting distribution of $Z_\infty$ on $K$, starting from $\xxx$. As $K(\xxx, \cdot) = d\nu_\xxx / d\nu$, the above result shows that the mass of $\nu_\xxx$ will concentrate around $\iota(\xxx)$ (equivalently, $S_{\xxx}(K)$) as $|\xxx| \to \infty$.
We have a Fatou-type theorem as a corollary.

\medskip

\begin{corollary} \label{th3.2}
Suppose $\{S_i\}_{i=1}^N$ satisfies OSC, and let $\{Z_n\}_{n=0}^\infty$ be a $\lambda$-NRW on the augmented tree $(X,\mathfrak E)$. Then for $u \in C(K)$ and $\varepsilon>0$, there exists a positive integer $n_0$ such that
\begin{equation} \label{eq3.1}
|Hu(\xxx)-u(\xi)| \leq \varepsilon, \qquad   \forall \ |\xxx| \geq n_0 ,  \ \xi \in S_\xxx(K).
\end{equation}
 In particular,  $\lim_{n\to \infty } Hu(\xxx_n) = u(\xi)$ uniformly for $\xi \in K$, where $(\xxx_n)_n$ is a geodesic ray converging to $\xi$.
\end{corollary}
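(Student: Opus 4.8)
The plan is to combine two ingredients: the Poisson kernel is normalized, $\int_K K(\xxx,\eta)\,d\nu(\eta) = \nu_\xxx(K) = 1$ since $\nu_\xxx$ is a probability distribution, and $u \in C(K)$ is uniformly continuous on the compact set $K$. Given $\varepsilon>0$, I would first use uniform continuity to pick $\delta>0$ with $|u(\eta)-u(\zeta)|<\varepsilon/2$ whenever $|\eta-\zeta|<2\delta$. For $\xi\in S_\xxx(K)$, the normalization lets me write
$$
Hu(\xxx)-u(\xi) = \int_K K(\xxx,\eta)\big(u(\eta)-u(\xi)\big)\,d\nu(\eta),
$$
and the idea is to split this integral over the near region $B(\iota(\xxx),\delta)$ and its complement.

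For the near region, the geometric input is that $\diam S_\xxx(K) = r_\xxx \diam K \le r^{|\xxx|}\diam K \to 0$; hence once $|\xxx|$ is large enough that $\diam S_\xxx(K)<\delta$, both $\xi$ and $\iota(\xxx)$ lie in $S_\xxx(K)$, and any $\eta\in B(\iota(\xxx),\delta)$ satisfies $|\eta-\xi|<2\delta$, so $|u(\eta)-u(\xi)|<\varepsilon/2$. Since $K(\xxx,\cdot)\ge 0$ integrates to at most $1$, the near part is bounded by $\varepsilon/2$. For the far region I would invoke Proposition \ref{th3.1}: there is an integer $n_0$, depending only on $\delta$ and the desired smallness, such that $K(\xxx,\eta)\le\varepsilon'$ for every $\eta\in K\setminus B(\iota(\xxx),\delta)$ whenever $|\xxx|\ge n_0$. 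The far part is then at most $\varepsilon'\cdot 2\Vert u\Vert_\infty\,\nu(K)=2\varepsilon'\Vert u\Vert_\infty$, which is $<\varepsilon/2$ once $\varepsilon'$ is small. Taking $n_0$ large enough to force both the cell-shrinking and Proposition \ref{th3.1} then yields $|Hu(\xxx)-u(\xi)|\le\varepsilon$ for all $|\xxx|\ge n_0$ and $\xi\in S_\xxx(K)$, which is \eqref{eq3.1}.

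For the ``in particular'' clause, the crucial observation is that the threshold $n_0$ produced above depends only on $\varepsilon$ --- through $\delta$ and through the $\xxx$-independent tail bound of Proposition \ref{th3.1} --- and not on $\xxx$ or $\xi$. Given $\xi\in K$ and a geodesic ray $(\xxx_n)_n$ converging to $\xi$, we have $\xi\in S_{\xxx_n}(K)$ for the canonical ray (and $|\xi-\iota(\xxx_n)|\le C r^n$ for a general one, which suffices for the same splitting). Applying \eqref{eq3.1} with $\xxx=\xxx_n$ for $n\ge n_0$ gives $|Hu(\xxx_n)-u(\xi)|\le\varepsilon$, and since $n_0$ is independent of $\xi$, the convergence $Hu(\xxx_n)\to u(\xi)$ is uniform in $\xi$.

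I expect the main obstacle to be the bookkeeping of uniformity: one must ensure that a single $n_0$ works simultaneously for all $\xxx$ with $|\xxx|\ge n_0$ and all $\xi\in S_\xxx(K)$. This rests entirely on the two uniform inputs --- the $\xxx$-independent Martin-kernel tail estimate from Proposition \ref{th3.1} and the purely $r^{|\xxx|}$-driven shrinking of the cells $S_\xxx(K)$ --- after which the triangle-inequality split is routine.
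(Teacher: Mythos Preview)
Your proposal is correct and follows essentially the same approach as the paper: split the Poisson integral into a near part $B(\iota(\xxx),\delta)$ controlled by uniform continuity of $u$ and a far part controlled by the uniform Martin-kernel tail estimate of Proposition~\ref{th3.1}, with a single $n_0$ depending only on $\varepsilon$. The only cosmetic difference is that the paper first bounds $|Hu(\xxx)-u(\iota(\xxx))|$ and then passes to $u(\xi)$ via $|u(\iota(\xxx))-u(\xi)|<\varepsilon/3$ (since both lie in $S_\xxx(K)$ of small diameter), whereas you compare $Hu(\xxx)$ to $u(\xi)$ directly using $|\eta-\xi|<2\delta$ on the near region; your route is marginally more direct but the substance is identical.
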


\begin{proof}
Since $u$ is continuous on the compact set $K$, $u$ is bounded and uniformly continuous. We let $\sup_{\xi \in K} |u(\xi)| = M_0 < \infty$ and choose $\delta>0$ such that $|u(\xi)-u(\eta)|<\varepsilon/3$ whenever $|\xi-\eta|<\delta$ on $K$. Furthermore, by Proposition \ref{th3.1}, we choose $n_0$ such that both $\diam(S_\xxx(K)) \leq \delta$ and $K(\xxx,\xi)\leq \frac {\varepsilon}  {6M_0}$ hold for any $\xxx \in X$ with $|\xxx| \geq n_0$ and $\xi \in K \setminus B(\iota(\xxx), \delta)$. Then for $|\xxx| \geq n_0$, by using the usual technique of splitting the following integral on $K$ into $K \cap B(\iota(\xxx), \delta)$ and $K \setminus B(\iota(\xxx), \delta)$, we can show that
\begin{align*}
|Hu(\xxx)-u(\iota(\xxx))| \leq \ds\int_K |K(\xxx,\eta)(u(\eta)-u(\iota(\xxx)))| d\nu(\eta) \leq \varepsilon
%\\
%&= \left(\ds\int_{K \cap B(\iota(\xxx),\delta)}+\ds\int_{K \setminus B(\iota(\xxx),\delta)}\right) K(\xxx,\eta)|u(\eta)-u(\iota(\xxx))| d\nu(\eta) \\
%&\leq \dfrac{\varepsilon}{3} \ds\int_{K \cap B(\iota(\xxx),\delta)}K(\xxx,\eta) d\nu(\eta) + 2M_0 \ds\int_{K \setminus B(\iota(\xxx),\delta)} K(\xxx,\eta)d\nu(\eta) \\
%&\leq \dfrac{\varepsilon}{3}+\dfrac{\varepsilon}{3} = \dfrac{2\varepsilon}{3}.
\end{align*}
Hence for $\xi \in S_\xxx(K)$,
$$
|Hu(\xxx)-u(\xi)| \leq |Hu(\xxx)-u(\iota(\xxx))|+|u(\iota(\xxx))-u(\xi)| \leq \dfrac{2\varepsilon}{3}+\dfrac{\varepsilon}{3} = \varepsilon,
$$
and \eqref{eq3.1} holds. For the last statement, let $(\xxx_n)_n$ be a geodesic ray converging to $\xi$,  then $\xxx_n = i_1 \cdots i_n$, and this $i_1i_2 \cdots \in \Sigma^\infty$ is a representation of some $\xi'$ with $\xi' \in S_{\xxx_n} (K)$, and $\xi'=\xi$ in $\partial_HX$.  Hence  by \eqref{eq3.1}, we have $\lim_{n\to \infty} Hu(\xxx_n) = u(\xi') =u(\xi)$, and the convergence is uniform on $\xi$.
\end{proof}

\bigskip

In the rest of this section, we assume that the $\lambda$-NRW has a return ratio $\lambda \in (0,r^\alpha)$. Then $\beta  = \log \lambda/\log r>\alpha$, and Proposition \ref{th2.4} applies.

\medskip

\begin{lemma} \label{th3.3}
Suppose $\{S_i\}_{i=1}^N$ satisfies OSC, and let $\{Z_n\}_{n=0}^\infty$ be a $\lambda$-NRW on the augmented tree $(X,\mathfrak E)$ with $\lambda \in (0,r^\alpha)$. Then for $f \in \dom_X$,
\begin{enumerate}
\item[(i)] there exists $C>0$ (depend on $f$)  such that for any geodesic ray $(\xxx_n)_n$, $$|f(\xxx_{n+1}) - f(\xxx_n)| \leq C (\lambda/r^\alpha)^{n/2},$$ and hence $\lim\limits_{n \to \infty} f(\xxx_n)$ exists;

\item[(ii)] for two equivalent geodesic rays $(\xxx_n)_n$ and $(\yyy_n)_n$, $\lim\limits_{n \to \infty} f(\xxx_n) = \lim\limits_{n \to \infty} f(\yyy_n)$.
\end{enumerate}
\end{lemma}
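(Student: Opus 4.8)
The plan is to derive both parts from one elementary observation about a single edge. Since every term in the energy sum \eqref{eq2.2} is nonnegative, the contribution of any one edge cannot exceed the total, so for any edge $(\uuu,\vvv)$ of $(X,\mathfrak E)$ we have $c(\uuu,\vvv)(f(\uuu)-f(\vvv))^2 \le 2\en_X[f]$, that is, $|f(\uuu)-f(\vvv)|^2 \le 2\en_X[f]/c(\uuu,\vvv)$. Everything then reduces to a uniform lower bound for the conductance of an edge lying at level $n$. If $\uuu \in \mathcal{J}_n$, the defining inequalities give $r^{n+1} < r_\uuu \le r^n$, while $r_\uuu \ge r^{|\uuu|}$ (each contraction ratio being $\ge r$) forces the word length $|\uuu| \ge n$; since $0<\lambda<1$ this yields $\lambda^{-|\uuu|}\ge \lambda^{-n}$. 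Combining these with \eqref{eq2.1} gives, for both vertical edges $(\uuu,\uuu^-)$ and horizontal edges $\uuu\sim_h\vvv$ at level $n$, a bound
\[
c(\uuu,\vvv) \ge c_0\,(r^\alpha/\lambda)^n
\]
for a constant $c_0>0$ depending only on the IFS. Hence $|f(\uuu)-f(\vvv)| \le C(\lambda/r^\alpha)^{n/2}$ with $C$ depending only on $\en_X[f]$ and the IFS. This is exactly where the hypothesis $\lambda \in (0,r^\alpha)$ enters: it makes $\lambda/r^\alpha<1$, so that the bound decays geometrically in $n$.

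For part (i), note that consecutive terms of a geodesic ray lie on consecutive levels, so $(\xxx_n,\xxx_{n+1})$ is a vertical edge with $\xxx_n=\xxx_{n+1}^-$ and $\xxx_{n+1}\in\mathcal{J}_{n+1}$. Applying the single-edge estimate at level $n+1$ and using $(\lambda/r^\alpha)^{1/2}<1$ gives $|f(\xxx_{n+1})-f(\xxx_n)|\le C(\lambda/r^\alpha)^{n/2}$, which is the asserted bound. Writing $f(\xxx_N)-f(\xxx_0)=\sum_{n=0}^{N-1}(f(\xxx_{n+1})-f(\xxx_n))$ and summing the geometric majorant, the series converges absolutely, so $(f(\xxx_n))_n$ is Cauchy and $\lim_{n\to\infty}f(\xxx_n)$ exists.

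For part (ii), if $(\xxx_n)_n$ and $(\yyy_n)_n$ converge to the same boundary point, then $\xxx_n\sim_h\yyy_n$ as recalled in Section \ref{sec:2}; since horizontal geodesics have uniformly bounded length, $\xxx_n$ and $\yyy_n$ are joined inside $\mathcal{J}_n$ by a horizontal path $\xxx_n=\www_0,\www_1,\dots,\www_{L_n}=\yyy_n$ with $L_n\le L$ for some $L$ independent of $n$. Applying the single-edge estimate to each of these level-$n$ horizontal edges and adding,
\[
|f(\xxx_n)-f(\yyy_n)| \le \sum_{i=0}^{L_n-1}|f(\www_i)-f(\www_{i+1})| \le L\,C\,(\lambda/r^\alpha)^{n/2}\longrightarrow 0.
\]
By part (i) both limits exist, and this shows their difference vanishes, giving (ii).

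The one genuinely delicate point is the conductance lower bound, namely controlling the gap between the word length $|\uuu|$ and the hyperbolic level $n$, which need not coincide when the contraction ratios differ. What rescues the estimate is a monotonicity: one always has $|\uuu|\ge n$, and because $\lambda<1$ a longer word only enlarges $\lambda^{-|\uuu|}$, so the conductance can only exceed the clean value $(r^\alpha/\lambda)^n$, never fall below it. The remaining ingredients---that consecutive terms of a ray are vertically adjacent, and that equivalent rays stay within bounded horizontal distance on each level---are read off directly from the geodesic description in Section \ref{sec:2}.
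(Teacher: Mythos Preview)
Your proof is correct and follows essentially the same route as the paper: bound the difference across a single edge by $\sqrt{\en_X[f]/c(\uuu,\vvv)}$, then use $c(\uuu,\vvv)\gtrsim (r^\alpha/\lambda)^n$ at level $n$ to get geometric decay. The paper is simply terser---it writes the inequality in one line and, for (ii), uses directly that two equivalent rays satisfy $\xxx_n \sim_h \yyy_n$ (a single horizontal edge, as stated in Section~\ref{sec:2}), so your bounded-length horizontal path is more caution than is needed here.
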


\begin{proof}
(i)  Let $\tau = \lambda / r^\alpha <1$. For a geodesic ray $(\xxx_n)_n$, since
\begin{equation}  \label{eq3.1'}
|f(\xxx_{n+1})-f(\xxx_n)| \leq \sqrt{\dfrac{\en_X[f]}{c(\xxx_{n+1},\xxx_n)}} \leq C(\lambda/r^\alpha)^{n/2} = C\tau^{n/2},
\end{equation}
hence the sequence $(f(\xxx_n))_n$ converges in an exponential rate.

\vspace{0.1cm}

 (ii) For two equivalent geodesic rays $(\xxx_n)_n$ and $(\yyy_n)_n$ that converge to the same $\xi$, if they are distinct, then $\xxx_n \sim_h \yyy_n$ for all $n$ (or by Lemma \ref{th2.0}). Then
\begin{equation}\label{eq3.2}
|f(\xxx_n)-f(\yyy_n)| \leq \sqrt{\dfrac{\en_X[f]}{c(\xxx_n,\yyy_n)}} \leq C'\tau^{n/2},
\end{equation}
which tends to 0 as $n \to \infty$. Hence the two limits are equal.
\end{proof}

\medskip

With the assumption as in Lemma \ref{th3.3}, we can define a linear map  ${\rm Tr}: \dom_X \rightarrow \ell(K)$ (called it a {\it trace map}) by
\begin{equation} \label{eq3.3}
({\rm Tr}f)(\xi) = \lim\limits_{n \to \infty} f(\xxx_n), \qquad \xi \in K,
\end{equation}
where $(\xxx_n)_n$ is a geodesic ray that converges to $\xi$. We call ${\rm Tr}f$ the {\it trace function} of $f$. By Lemma \ref{th3.3}(ii), the limit in \eqref{eq3.3} is ``uniform" in the sense that
for $f \in \dom_X$ and $\varepsilon>0$, there exists a positive integer $n_0$ such that
\begin{equation} \label{eq3.4}
|f(\xxx)-{\rm Tr}f(\xi)| \leq \varepsilon,  \qquad \forall \ |\xxx| \geq n_0, \ \xi \in S_\xxx(K).
\end{equation}

\medskip

\begin{lemma} \label{th3.4}
Suppose $\{S_i\}_{i=1}^N$ satisfies OSC, and let $\{Z_n\}_{n=0}^\infty$ be a $\lambda$-NRW with ratio $\lambda \in (0,r^\alpha)$ on the augmented tree $(X,\mathfrak E)$. Then ${\rm Tr} f$ is continuous on $K$.
\end{lemma}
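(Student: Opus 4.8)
The plan is to prove the stronger statement that ${\rm Tr}\,f$ is \emph{uniformly} continuous on $K$, via a two-scale estimate: I will approximate the boundary values ${\rm Tr}\,f(\xi)$ and ${\rm Tr}\,f(\eta)$ by the values of $f$ at two deep vertices whose cells contain $\xi$ and $\eta$, and then control the difference of those two values by summing energy increments of $f$ along a short horizontal geodesic joining the two vertices. Throughout write $\tau = \lambda/r^\alpha < 1$.

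First I fix $\varepsilon>0$ and, using the uniform convergence \eqref{eq3.4}, choose $n_0$ so that $|f(\xxx)-{\rm Tr}\,f(\zeta)|\leq \varepsilon/3$ whenever $|\xxx|\geq n_0$ and $\zeta\in S_\xxx(K)$. Next, given distinct $\xi,\eta\in K$, I select geodesic rays $(\xxx_n)_n\to\xi$ and $(\yyy_n)_n\to\eta$ and invoke the canonical-geodesic decomposition from Section~\ref{sec:2}: for deep vertices $\xxx_m,\yyy_{m'}$ the geodesic between them runs vertically up from $\xxx_m$ to its ancestor at some level $\ell$, across a horizontal geodesic of uniformly bounded length $h$, and vertically down to $\yyy_{m'}$. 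Since vertical segments are ancestor chains, the turning vertices are exactly $\xxx_\ell$ and $\yyy_\ell$, so $\xxx_\ell$ and $\yyy_\ell$ are joined by a horizontal geodesic in $\mathcal{J}_\ell$ of length at most $h$. From $(\xxx_m|\yyy_{m'})=\ell-h/2\to(\xi|\eta)$ as $m,m'\to\infty$ and the boundedness of $h$, this turning level stabilizes at an integer $\ell=(\xi|\eta)+O(1)$; fix it. Crucially, Theorem~\ref{th2.1} gives $(\xi|\eta)\asymp |\log r|^{-1}\log(1/|\xi-\eta|)$, so $\ell\to\infty$ as $|\xi-\eta|\to 0$.

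With this level in hand I estimate the three pieces of $|{\rm Tr}\,f(\xi)-{\rm Tr}\,f(\eta)|$. Since $\xi\in S_{\xxx_\ell}(K)$ and $\eta\in S_{\yyy_\ell}(K)$ and, for $|\xi-\eta|$ small, $\ell\geq n_0$, the estimate \eqref{eq3.4} yields $|f(\xxx_\ell)-{\rm Tr}\,f(\xi)|\leq\varepsilon/3$ and $|f(\yyy_\ell)-{\rm Tr}\,f(\eta)|\leq\varepsilon/3$. For the middle term I sum the increments of $f$ along the horizontal geodesic $[\xxx_\ell,\yyy_\ell]$: each of its at most $h$ edges lies in $\mathcal{J}_\ell$, where $r_\zzz\asymp r^\ell$ gives conductance $c\asymp r_\zzz^\alpha\lambda^{-\ell}\asymp\tau^{-\ell}$, so exactly as in Lemma~\ref{th3.3}(i) each increment is at most $\sqrt{\en_X[f]/c}\leq C\tau^{\ell/2}$, whence $|f(\xxx_\ell)-f(\yyy_\ell)|\leq hC\tau^{\ell/2}$. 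Combining the three pieces gives $|{\rm Tr}\,f(\xi)-{\rm Tr}\,f(\eta)|\leq \tfrac{2\varepsilon}{3}+hC\tau^{\ell/2}$. Because $\ell\to\infty$ as $|\xi-\eta|\to 0$, I may pick $\delta>0$ so that $|\xi-\eta|<\delta$ forces both $\ell\geq n_0$ and $hC\tau^{\ell/2}<\varepsilon/3$, giving $|{\rm Tr}\,f(\xi)-{\rm Tr}\,f(\eta)|\leq\varepsilon$. As $\delta$ depends only on $\varepsilon$ and on $f$ through the fixed constants, ${\rm Tr}\,f$ is uniformly continuous, hence continuous.

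The main obstacle is the geometric bookkeeping in the second step: translating the Euclidean proximity of $\xi,\eta$ into the combinatorial fact that their geodesic rays are joined, at a level $\ell\geq n_0$ that grows as $|\xi-\eta|\to 0$, by a horizontal geodesic of uniformly bounded length. This rests on the uniform boundedness of horizontal geodesics in the augmented tree and the identity $(\xxx|\yyy)=\ell-h/2$, together with Theorem~\ref{th2.1} to pin $\ell$ to $(\xi|\eta)$. Once the level and the bounded path length are secured, the energy estimate is a routine application of Lemma~\ref{th3.3}(i).
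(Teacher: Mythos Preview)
Your proof is correct and follows essentially the same strategy as the paper: split $|{\rm Tr}f(\xi)-{\rm Tr}f(\eta)|$ into two ``vertical'' pieces controlled by \eqref{eq3.4} and one ``horizontal'' piece along a bounded-length horizontal geodesic at the turning level, estimate the latter edge-by-edge via $|f(\uuu_i)-f(\uuu_{i+1})|\leq\sqrt{\en_X[f]/c(\uuu_i,\uuu_{i+1})}\leq C\tau^{\ell/2}$, and use Theorem~\ref{th2.1} to push $\ell\to\infty$ as $|\xi-\eta|\to 0$. The paper streamlines your second step by invoking directly the canonical geodesic $[\xi,\uuu,\vvv,\eta]$ between the boundary points themselves (with $|\uuu|\geq(\xi|\eta)$), which avoids your limit $m,m'\to\infty$ and the slightly informal ``stabilizes'' claim; but the content is the same, and your observation that the vertical segments force $\uuu=\xxx_\ell$, $\vvv=\yyy_\ell$ is correct since each vertex has a unique tree-ancestor at a given level.
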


\begin{proof} For $\varepsilon>0$, by \eqref{eq3.4}, there exists  $n_0$ such that $|f(\xxx)-{\rm Tr}f(\xi)|<\varepsilon/3$ for $|\xxx| \geq n_0$ and $\xi \in S_\xxx(K)$. Let $M$ be the uniform bound of the  horizontal geodesics in $(X, {\mathfrak E})$  \cite {LW1},  and let $C$ be a constant such that $c(\xxx,\yyy) \geq C^{-1}(r^\alpha/\lambda)^{|\xxx|}$ for all $\xxx \sim_h \yyy$. By assumption $\tau := \lambda / r^\alpha <1$.  We choose $n_1 \geq n_0$ such that $M\sqrt{C\en_X[f]\tau^{n_1}}<\varepsilon/3$.

\vspace{0.1cm}

As $|\xi-\eta| \asymp r^{(\xi|\eta)}$ (Theorem \ref{th2.1}), we can pick $\delta > 0$ such that $(\xi|\eta) \geq n_1$ whenever $|\xi-\eta|<\delta$. Now for $\xi,\eta \in K$ with $|\xi-\eta|<\delta$, consider a canonical geodesic $[\xi, \uuu, \vvv, \eta]$ with horizontal geodesic $(\uuu=\uuu_0,\uuu_1,\ldots,\uuu_k=\vvv)$ (see Section \ref{sec:2}). Then $|\uuu| \geq (\xi|\eta) \geq n_1$, and hence
\begin{align*}
|{\rm Tr}f(\xi)-{\rm Tr}f(\eta)| &\leq |{\rm Tr}f(\xi)-f(\uuu)|+|f(\uuu)-f(\vvv)|+|f(\vvv)-{\rm Tr}f(\eta)| \\
&< \dfrac{\varepsilon}{3}+\sum_{i=0}^{k-1}|f(\uuu_i)-f(\uuu_{i+1})|+\dfrac{\varepsilon}{3} \\
%&\leq \dfrac{2\varepsilon}{3}+\sum_{i=0}^{k-1}\sqrt{\dfrac{\en_X[f]}{c(\uuu_i,\uuu_{i+1})}} \\
&< \dfrac{2\varepsilon}{3}+M\sqrt{C\en_X[f]\tau^{n_1}} < \varepsilon. \qquad \hbox {(by \eqref{eq3.2})}
\end{align*}
This concludes that ${\rm Tr}f \in C(K)$.
\end{proof}

\medskip

\begin{theorem} \label{th3.6}
Suppose $\{S_i\}_{i=1}^N$ satisfies the OSC, and let $\{Z_n\}_{n=0}^\infty$ be a $\lambda$-NRW with ratio $\lambda \in (0,r^\alpha)$ on the augmented tree $(X,\mathfrak E)$. Then ${\rm Tr}({\mathcal H}{\mathcal D}_X ) = \dom_K$ where ${\mathcal H}{\mathcal D}_X$ is the class of harmonic functions in $\dom_X$. More precisely,
${\rm Tr}Hu=u$ for $u \in \dom_K$, and $H{\rm Tr}f=f$ for $f \in {\mathcal H}{\mathcal D}_X$.
\end{theorem}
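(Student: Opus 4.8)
The plan is to prove the two pointwise identities separately, namely ${\rm Tr}\,Hu=u$ for every $u\in\dom_K$ and $H\,{\rm Tr}f=f$ for every $f\in{\mathcal H}{\mathcal D}_X$, and then read off the set equality from them. The first identity gives $\dom_K\subseteq{\rm Tr}({\mathcal H}{\mathcal D}_X)$: indeed $Hu\in\ha(X)$ by \eqref{eq2.2'} and $Hu\in\dom_X$ by the very definition of $\dom_K$ in Theorem \ref{th2.3}, so $Hu\in{\mathcal H}{\mathcal D}_X$ and $u={\rm Tr}(Hu)$. The second identity, as I explain below, forces the reverse inclusion. For the first identity itself I would simply invoke Proposition \ref{th2.4}: since $\lambda\in(0,r^\alpha)$ yields $\beta>\alpha$, every $u\in\dom_K=\Lambda_{2,2}^{\alpha,\beta/2}$ is H\"older continuous, hence $u\in C(K)$, and Corollary \ref{th3.2} then applies verbatim, giving $\lim_n Hu(\xxx_n)=u(\xi)$ along any geodesic ray $(\xxx_n)_n\to\xi$, which is exactly ${\rm Tr}(Hu)=u$ by the definition \eqref{eq3.3} of the trace map. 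This step is essentially immediate from the Fatou-type result already in hand.

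The substance is the second identity. Given $f\in{\mathcal H}{\mathcal D}_X$, Lemmas \ref{th3.3} and \ref{th3.4} guarantee that $u:={\rm Tr}f$ is well defined and lies in $C(K)$, so $Hu$ makes sense and is harmonic, and I set $h:=f-Hu\in\ha(X)$, aiming to show $h\equiv 0$. The key observation is that $h$ vanishes at the boundary in a strong, uniform sense: for $|\xxx|\ge n_0$ and $\xi\in S_\xxx(K)$ one has
$$
|h(\xxx)|\le|f(\xxx)-u(\xi)|+|u(\xi)-Hu(\xxx)|,
$$
where the first term is controlled by \eqref{eq3.4} applied to $f\in\dom_X$ and the second by \eqref{eq3.1} of Corollary \ref{th3.2} applied to $Hu$ with $u\in C(K)$. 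Hence $h(\xxx)\to0$ uniformly as $|\xxx|\to\infty$; crucially this avoids having to know a priori that $Hu\in\dom_X$. In particular $h$ is bounded on $X$, and its extension $\bar h$ to the compact space ${\widehat X}_H$ is continuous and vanishes on $\partial_HX$.

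Now I would run a discrete maximum principle. Put $M:=\sup_X h$. Because $h(\xxx)\to0$ at infinity, if $M>0$ then the supremum is attained at some vertex $\xxx_0\in X$; harmonicity $h(\xxx_0)=\sum_\yyy P(\xxx_0,\yyy)h(\yyy)$ together with $\sum_\yyy P(\xxx_0,\yyy)=1$ and $h\le M$ forces $h=M$ at every neighbour of $\xxx_0$, and by connectedness of $(X,{\mathfrak E})$ on all of $X$, contradicting $h(\xxx)\to0$. Thus $M\le0$, and applying the same argument to $-h$ gives $h\equiv0$, that is $f=H\,{\rm Tr}f$. Finally, since $f\in\dom_X$ and $f=Hu$, the definition of $\dom_K$ in Theorem \ref{th2.3} shows $u={\rm Tr}f\in\dom_K$ (note ${\rm Tr}f\in C(K)\subset L^2(K,\nu)$), which delivers ${\rm Tr}({\mathcal H}{\mathcal D}_X)\subseteq\dom_K$ and completes the proof.

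The main obstacle is precisely this uniqueness step: identifying an arbitrary finite-energy harmonic function with the Poisson integral of its own trace. The maximum principle disposes of it cleanly once the uniform boundary decay of $h$ is established, and the only delicate point is keeping the argument free of the circular assumption $Hu\in\dom_X$, which is why I control $f$ and $Hu$ through the two separate estimates \eqref{eq3.4} and \eqref{eq3.1} rather than through a single energy estimate for $h$. As a cross-check, the same conclusion follows probabilistically: $f$ being bounded and harmonic makes $f(Z_n)$ a bounded martingale converging a.s.\ to $u(Z_\infty)$, whence $f(\xxx)={\mathbb E}_\xxx[u(Z_\infty)]=Hu(\xxx)$.
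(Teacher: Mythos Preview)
Your proof is correct and follows essentially the same route as the paper: both halves of the identity are handled identically (Proposition \ref{th2.4} plus Corollary \ref{th3.2} for ${\rm Tr}Hu=u$; the pair of uniform estimates \eqref{eq3.4} and \eqref{eq3.1} to force $h=f-Hu\to0$ at infinity, then a maximum principle to conclude $h\equiv0$). The only cosmetic difference is that the paper phrases the maximum principle level-wise (showing $a_n:=\max_{\mathcal J_n}h$ is non-decreasing, contradicting $a_n\to0$) whereas you use the attained-supremum/propagation-by-connectedness variant; your explicit deduction of ${\rm Tr}f\in\dom_K$ from $f=Hu\in\dom_X$ and the martingale cross-check are welcome additions.
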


\begin{proof}
For $u \in \dom_K$, by definition we have $Hu \in {\mathcal H}{\mathcal D}_X$. Note that $\dom_K \cap C(K) = \dom_K$, as $\dom_K = \Lambda^{\alpha, \beta/2}_{2,2}$  can be imbedded into the H\"{o}lder space $C^{(\beta-\alpha)/2}(K)$ if $\beta>\alpha$ (Proposition \ref{th2.4}). By Corollary \ref{th3.2}, we have ${\rm Tr}Hu=u$.

\vspace{0.2cm}

 For $f \in {\mathcal H}{\mathcal D}_X$, let $u = {\rm Tr}f$.  Then $u \in C(K)$ (Lemma \ref{th3.4}). For any $\varepsilon>0$, by \eqref{eq3.4} and Corollary \ref{th3.2}, there exists a positive integer $n_0$ such that for $|\xxx| \geq n_0$ and $\xi \in S_\xxx(K)$,
\begin{equation} \label{eq3.5}
|f(\xxx)-u(\xi)|<\dfrac{\varepsilon}{2} \quad \hbox{and} \quad |Hu(\xxx)-u(\xi)|<\dfrac{\varepsilon}{2}.
\end{equation}
We show that $f = Hu$ on $X$. Suppose otherwise, we can assume without loss of generality that $f(\xxx_0)>Hu(\xxx_0)$ for some $\xxx_0 \in \mathcal J_m$. Let $a_n = \max_{\xxx \in \mathcal J_n}(f(\xxx)-Hu(\xxx))$, $n \geq 1$. Note that $f-Hu$ is harmonic. By the maximum principle of harmonic functions, we regard $\mathcal J_{n+1}$ as the boundary of $X_{n+1}=\bigcup_{k=0}^{n+1} \mathcal J_k$. Then $a_{n+1} \geq \max_{\xxx \in X_n} (f(\xxx)-Hu(\xxx)) = a_n$, thus the sequence $\{a_n\}$ is non-decreasing. Hence $\inf_{n \geq m} a_n = a_m > 0$. This contradicts that $\lim_{n \to \infty} a_n = 0$ by \eqref{eq3.5}. We conclude that $f = Hu = H{\rm Tr}f$.
\end{proof}

\medskip

Let $\vartheta$ be the root of $(X, \mathfrak E)$, then ${\mathcal D}_X$ is a Hilbert space under the inner product $\langle f, g\rangle_\vartheta = f(\vartheta) g(\vartheta) + {\mathcal E}_X (f, g)$.  Let $||\cdot ||_\vartheta$ denote the norm, and let ${\mathcal D}_{X,0}$ be the $||\cdot ||_\vartheta$-closure of  functions on $X$ with finite supports.  It is known that for $f \in {\mathcal D}_X$, it admits a decomposition $f = f_{\mathcal H} + f_0$ where $f_{\mathcal H} \in {\mathcal H}{\mathcal D}_X$ and $f_0\in {\mathcal D}_{X,0}$ \cite [Theorem 3.69] {So}.

\medskip

\begin {corollary} \label {th3.6'} With the same assumption as in Theorem \ref{th3.6}, then for $f \in {\mathcal D}_X$, we have  ${\rm Tr}f  ={\rm Tr}f_{\mathcal H}$, and hence ${\rm Tr} f \in {\mathcal D}_K$.
\end{corollary}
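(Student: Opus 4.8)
The plan is to reduce the statement to the single claim that $\operatorname{Tr}$ annihilates $\mathcal{D}_{X,0}$. Since $\operatorname{Tr}$ is linear and the decomposition $f = f_{\mathcal H} + f_0$ has $f_{\mathcal H} \in \mathcal{H}\mathcal{D}_X$ and $f_0 \in \mathcal{D}_{X,0}$, we get $\operatorname{Tr}f = \operatorname{Tr}f_{\mathcal H} + \operatorname{Tr}f_0$, so once $\operatorname{Tr}f_0 = 0$ is established the first assertion $\operatorname{Tr}f = \operatorname{Tr}f_{\mathcal H}$ follows immediately. The second assertion is then free: by Theorem \ref{th3.6}, $\operatorname{Tr}f_{\mathcal H} \in \operatorname{Tr}(\mathcal{H}\mathcal{D}_X) = \dom_K$, hence $\operatorname{Tr}f \in \dom_K$.

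The core step I would carry out is to show that $\operatorname{Tr}$, viewed as a map from $(\dom_X, \|\cdot\|_\vartheta)$ into $(C(K), \|\cdot\|_\infty)$, is a \emph{bounded} linear operator. For this I would sum the exponential estimate of Lemma \ref{th3.3}(i) along a geodesic ray. Every geodesic ray $(\xxx_n)_n$ starts at the root $\xxx_0 = \vartheta$ (as $\mathcal J_0 = \{\vartheta\}$), so telescoping and using that the constant in Lemma \ref{th3.3}(i) is of the form $C_0\sqrt{\en_X[f]}$ (its $f$-dependence enters only through $\sqrt{\en_X[f]}$, since $c(\xxx_{n+1},\xxx_n) \gtrsim (r^\alpha/\lambda)^n$ with a constant fixed by the IFS) yields, for $\xi = \lim_n \xxx_n$,
\[
|\operatorname{Tr}f(\xi) - f(\vartheta)| \le \sum_{n=0}^\infty |f(\xxx_{n+1}) - f(\xxx_n)| \le C_0\sqrt{\en_X[f]}\,\sum_{n=0}^\infty \tau^{(n+1)/2},
\]
where $\tau = \lambda/r^\alpha < 1$. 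The geometric series converges, and the bound is uniform over all rays (equivalently over all $\xi \in K$). Since $\|f\|_\vartheta^2 = f(\vartheta)^2 + \en_X[f]$, this gives $\|\operatorname{Tr}f\|_\infty \le |f(\vartheta)| + C_1\sqrt{\en_X[f]} \le C_2\|f\|_\vartheta$.

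With continuity secured, I would first note that $\operatorname{Tr}$ kills every finitely supported function $g$: along any ray one has $|\xxx_n| = n \to \infty$, so $g(\xxx_n) = 0$ for all large $n$ and thus $\operatorname{Tr}g \equiv 0$. Because $\mathcal{D}_{X,0}$ is by definition the $\|\cdot\|_\vartheta$-closure of the finitely supported functions, choosing $g_k \to f_0$ in $\|\cdot\|_\vartheta$ with each $g_k$ finitely supported and applying the boundedness estimate gives $\|\operatorname{Tr}f_0\|_\infty = \|\operatorname{Tr}(f_0 - g_k)\|_\infty \le C_2\|f_0 - g_k\|_\vartheta \to 0$, so $\operatorname{Tr}f_0 = 0$. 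This closes the reduction from the first paragraph.

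The step I expect to be the main obstacle is the boundedness estimate: one must check that the constant in Lemma \ref{th3.3}(i) is genuinely of the form $C_0\sqrt{\en_X[f]}$ with $C_0$ independent of both $f$ and the chosen geodesic ray, and then sum over the tail. The uniformity over rays is the delicate part, but it follows from the conductance lower bound $c(\xxx_{n+1},\xxx_n) \gtrsim (r^\alpha/\lambda)^n$, whose implied constant depends only on the IFS. Once this uniform bound is in place, the density-plus-continuity argument and the appeal to Theorem \ref{th3.6} are entirely routine.
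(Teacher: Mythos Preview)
Your proposal is correct and follows essentially the same approach as the paper. Both reduce to showing $\operatorname{Tr}$ vanishes on $\mathcal{D}_{X,0}$, and both do so by summing the estimate $|f(\xxx_{k+1})-f(\xxx_k)| \le C_0\tau^{k/2}\sqrt{\en_X[f]}$ along the vertical path from $\vartheta$ and then passing to the $\|\cdot\|_\vartheta$-closure of finitely supported functions. The only difference is packaging: you phrase the key estimate as boundedness of $\operatorname{Tr}:(\dom_X,\|\cdot\|_\vartheta)\to (C(K),\|\cdot\|_\infty)$, whereas the paper phrases it as ``$g_\ell \to 0$ in $\|\cdot\|_\vartheta$ implies $g_\ell \to 0$ uniformly on $X$'' and then takes the limit along a ray by hand; the underlying inequality and the density argument are identical.
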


\begin {proof}
It suffices to show that $ {\rm Tr}f \equiv 0$  for $f \in {\mathcal D}_{X, 0}$, then the corollary follows from the above decomposition  and Theorem \ref{th3.6}  that ${\rm Tr}({\mathcal H}{\mathcal D}_X) = {\mathcal D}_K$.

First we claim  that if $\{g_\ell\}_\ell \subset {\mathcal D}_X$ satisfies $g_\ell  {\overset{||\cdot ||_\vartheta}{\longrightarrow}} 0$, then $\lim_{\ell\to \infty} g_\ell(\xxx) =0$ for all $\xxx \in X$ uniformly.  Indeed for $\xxx \in X$, let  $(\vartheta=\xxx_0, \xxx_1,\cdots, \xxx_n= \xxx)$ be the geodesic from $\vartheta$ to $\xxx$, then it follows from the same argument as in \eqref{eq3.1'} that
$$
 |g_\ell (\xxx)- g_\ell (\vartheta)|  \leq   \sum_{k=0}^{n-1} |g_\ell(\xxx_{k+1}) - g_\ell(\xxx_k)| \leq \Big(\sum_{k=0}^{n-1} C_1\tau^{k/2}\Big) \sqrt {{\mathcal E}_X(g_\ell)} = C_2 \sqrt {{\mathcal E}_X(g_\ell)}.
$$
Also observe that $\lim_{\ell\to \infty}g_\ell(\vartheta) =0$, and hence the claim follows.

\vspace {0.1cm}
Now for $f \in {\mathcal D}_{X, 0}$, let $\{f_\ell\}_\ell \subset {\mathcal D}_X$ be such that each $f_\ell$ has finite support and $f_\ell {\overset{||\cdot ||_\vartheta}{\longrightarrow}} f$. For $\varepsilon >0$, by the claim, there exists $\ell_0$ such that $|(f-f_{\ell_0})(\xxx)| \leq \varepsilon$ for all $\xxx \in X$. For $\xi \in K$, let $(\xxx_n)_n$ be a geodesic ray that converges to $\xi$.  Then
$$
 |f(\xxx_n)| \leq |(f-f_{\ell_0}) (\xxx_n)| + |f_{\ell_0} (\xxx_n)| \leq |f_{\ell_0} (\xxx_n)|+\varepsilon, \qquad \forall \ n.
$$
This implies  $ {\rm Tr}(f)(\xi) := \lim_{n\to\infty} f(\xxx_n) = 0$, and completes the proof.
\end{proof}

\bigskip

In Theorem \ref{th3.6}, we can actually give another norm on  ${\mathcal D}_X$ so that $H: {\mathcal D}_K\rightarrow {\mathcal H}{\mathcal D}_X$ is a Banach space isomorphism. Indeed, by Corollary \ref{th3.6'} and the continuity of functions in  ${\mathcal D}_K$, we know that functions in ${\mathcal D}_X$ are bounded. Fix $w \in (0,r^\alpha)$. Let $\Vert f \Vert_{\ell^2(X,w)}^2 = \sum_{\xxx \in X}|f(\xxx)|^2 w^{|\xxx|}$, and define $\Vert \cdot \Vert_{\en_X}$ on $\dom_X$ by
\begin{equation} \label{eq3.5'}
\Vert f \Vert_{\en_X}^2 = \en_X[f] + \Vert f \Vert_{\ell^2(X,w)}^2.
\end{equation}
Then it is direct to check that $\Vert f \Vert_{\en_X}^2$ defines a complete norm on $\dom_X$.

\medskip

\begin{corollary} \label{th3.5}
With the same assumption as in  Theorem \ref{th3.6}, let $w \in (0, r^\alpha)$.  Then for all $u \in L^2(K,\nu)$,
\begin{equation} \label{eq3.6}
\Vert Hu \Vert_{\ell^2(X,w)} \leq C \Vert u \Vert_{L^2(K,\nu)}.
\end{equation}
Consequently, $H: ({\mathcal D}_K, \ \Vert \cdot \Vert_{\en_K})\rightarrow ({\mathcal H}{\mathcal D}_X , \ \Vert \cdot \Vert_{\en_X}) $ is an isomorphism.
\end{corollary}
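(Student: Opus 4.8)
The plan is to prove the estimate \eqref{eq3.6} first, then combine it with the identity $\en_X[Hu]=\en_K[u]$ to bound $H$, and finally bound the inverse ${\rm Tr}$ by a telescoping argument anchored at the root $\vartheta$; bijectivity of $H$ onto ${\mathcal H}{\mathcal D}_X$ is already supplied by Theorem \ref{th3.6}, so two-sided norm bounds give the isomorphism. For \eqref{eq3.6} I would use that $K(\xxx,\cdot)=d\nu_\xxx/d\nu$ is the density of a probability measure, so $\int_K K(\xxx,\xi)\,d\nu(\xi)=1$; Jensen's inequality (Cauchy--Schwarz against $\nu_\xxx$) then gives $|Hu(\xxx)|^2\le\int_K K(\xxx,\xi)|u(\xi)|^2 d\nu(\xi)$. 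Summing against $w^{|\xxx|}$ and interchanging sum and integral by Tonelli reduces \eqref{eq3.6} to the uniform estimate
\[
\sup_{\xi\in K}\ \sum_{\xxx\in X}w^{|\xxx|}K(\xxx,\xi)\le C<\infty.
\]

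This uniform kernel sum is the step I expect to be the main obstacle. I would insert the estimate $K(\xxx,\xi)\asymp\lambda^{|\xxx|}(\lambda r^\alpha)^{-(\xxx|\xi)}$ from Theorem \ref{th2.2}(ii) and sum level by level. Fixing $\xi$ and $n$, group the cells $\xxx\in\mathcal J_n$ by the (essentially integer) value $k\approx(\xxx|\xi)$ of the Gromov product, which records the level at which the geodesic from $\xxx$ branches off the ray to $\xi$. By the $\alpha$-Ahlfors regularity of $\nu$ (a level-$k$ cell contains $\asymp r^{-\alpha(n-k)}$ cells of $\mathcal J_n$), the number of such $\xxx$ is $\lesssim r^{-\alpha(n-k)}$, so
\[
\sum_{\xxx\in\mathcal J_n}(\lambda r^\alpha)^{-(\xxx|\xi)}\lesssim r^{-\alpha n}\sum_{k=0}^{n}\lambda^{-k}\asymp(\lambda r^\alpha)^{-n}.
\]
Multiplying by $w^{n}\lambda^{n}$ collapses the level-$n$ contribution to $\asymp(w/r^\alpha)^n$, and since $w<r^\alpha$ the geometric series in $n$ converges, uniformly in $\xi$. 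It is precisely the hypothesis $w\in(0,r^\alpha)$ that secures summability here.

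Granting \eqref{eq3.6}, boundedness of $H$ is immediate: since $\en_K[u]:=\en_X[Hu]$ by Theorem \ref{th2.3}, the energy parts of the two norms coincide, so $\Vert Hu\Vert_{\en_X}^2=\en_X[Hu]+\Vert Hu\Vert_{\ell^2(X,w)}^2=\en_K[u]+\Vert Hu\Vert_{\ell^2(X,w)}^2\le\en_K[u]+C^2\Vert u\Vert_{L^2}^2\le C'\Vert u\Vert_{\en_K}^2$. For the bounded inverse I would control $\Vert u\Vert_{L^2}$ from below. Telescoping the exponential decay estimate of Lemma \ref{th3.3}(i) along the geodesic ray from $\vartheta=\kappa_0(\xi)$ to $\xi$, together with $u(\xi)=\lim_n Hu(\kappa_n(\xi))$, gives $|u(\xi)-Hu(\vartheta)|\le C_1\sqrt{\en_X[Hu]}=C_1\sqrt{\en_K[u]}$. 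Integrating over $K$ and using $\nu(K)=1$ (Theorem \ref{th2.2}(iii)) yields $\Vert u\Vert_{L^2}^2\lesssim|Hu(\vartheta)|^2+\en_K[u]\le\Vert Hu\Vert_{\ell^2(X,w)}^2+\en_K[u]$, the last inequality because $|Hu(\vartheta)|^2=|Hu(\vartheta)|^2 w^{0}\le\Vert Hu\Vert_{\ell^2(X,w)}^2$. Hence $\Vert u\Vert_{\en_K}^2\lesssim\en_K[u]+\Vert Hu\Vert_{\ell^2(X,w)}^2=\Vert Hu\Vert_{\en_X}^2$, so ${\rm Tr}=H^{-1}$ is bounded and $H$ is a Banach space isomorphism.
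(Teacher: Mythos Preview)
Your proof is correct and follows the same overall architecture as the paper's---Jensen/Tonelli to reduce \eqref{eq3.6} to a uniform level-wise kernel sum, then the energy identity $\en_X[Hu]=\en_K[u]$ for the isomorphism---but differs in two places. For the kernel sum $\sum_{\xxx\in\mathcal J_n}K(\xxx,\xi)\lesssim r^{-\alpha n}$, the paper obtains this in one line from the first-passage decomposition $F(\vartheta,\yyy)=\sum_{\xxx\in\mathcal J_n}r_\xxx^\alpha F(\xxx,\yyy)$ of \cite[Theorem 4.6]{KLW}, whereas you insert the pointwise asymptotic of Theorem \ref{th2.2}(ii) and count cells by the branching level $k\approx(\xxx|\xi)$; both yield the same bound, the paper's route being shorter but dependent on the cited identity, yours being self-contained and more geometric. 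For the inverse bound, the paper simply invokes the open mapping theorem after noting bijectivity (Theorem \ref{th3.6}) and boundedness of $H$, while you give an explicit estimate $\Vert u\Vert_{L^2}^2\lesssim |Hu(\vartheta)|^2+\en_K[u]$ by telescoping Lemma \ref{th3.3}(i) along a geodesic ray; this is more elementary and yields a concrete constant, at the cost of a few extra lines.
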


\medskip

\begin{proof}  Let $F(\xxx, \yyy)$  denote the probability that the random walk ever visits $\yyy$ from $\xxx$.
For $n \geq 1$ and $|\yyy| > n$, by \cite[Theorem 4.6]{KLW},
\begin{equation*} \label{eq3.7}
F(\vartheta, \yyy) = \sum_{\xxx \in {\mathcal J}_n}F_n(\vartheta, \xxx)F(\xxx,\yyy) = \sum_{\xxx \in {\mathcal J}_n} r_\xxx^\alpha F(\xxx,\yyy) \geq r^{\alpha(n+1)} \sum_{\xxx \in {\mathcal J}_n} F(\xxx,\yyy).
\end{equation*}
Hence $\sum_{\xxx \in {\mathcal J}_n} K(\xxx,\xi) = \sum_{\xxx \in {\mathcal J}_n}\frac {F(\xxx, \yyy)}{F(\vartheta, \yyy)} \leq r^{-\alpha(n+1)}$.
It follows that for $u \in L^2(K,\nu)$,
\begin{align*}
\Vert Hu \Vert_{\ell^2(X,w)}^2 &\ = \ {\sum}_{\xxx \in X} \big(\mathbb{E}_\xxx(u(Z_\infty))\big)^2 w^{|\xxx|}
\ \leq \ {\sum}_{\xxx \in X} \big (\mathbb{E}_\xxx(u(Z_\infty)^2)\big )                                                                    w^{|\xxx|} \hspace{9mm} \\
&\ =\  {\sum}_{n=0}^\infty w^n \sum_{\xxx \in {\mathcal J}_n} \int_K K(\xxx,\xi)|u(\xi)|^2 d\nu(\xi)
\ \leq \ C \Vert u \Vert_{L^2(K,\nu)}^2.
\end{align*}
where $C= r^{-\alpha}{\sum}_{n=0}^\infty (w/r^\alpha)^n$. As $w/r^\alpha<1$, this yields \eqref{eq3.6}.  In view of Theorem \ref{th3.6}, the norm isomorphism of the map  $H: {\mathcal D}_K \rightarrow {\mathcal H}{\mathcal D}_X$ follows from this and $\en_K (u, v) = \en_X (Hu, Hv)$, and the open mapping theorem.
\end{proof}

\bigskip

\section{Effective resistances of ${\mathcal E}_X$}
\label{sec:4}

\noindent In this section, we will set up the limiting resistance for the $\lambda$-NRW on the augmented tree $(X,\mathfrak E)$ in order to prepare for the investigation of the critical exponents of ${\mathcal D}_K$ in the next section.

\medskip

We will start with a general situation. Let $V$ be a finite graph with a reversible Markov chain with conductance $c(x, y), x,y \in V$. Let $\ell (V)$ denote the class of real valued functions on $V$, and let ${\mathcal E}_V(f)$ be the graph energy of $f$.  For any $V_1 \subset V$, it is well-known that each $f\in \ell(V_1)$ has a harmonic extension $\widetilde f$ on $V$;  $\widetilde f$ has the minimal energy among all $g\in \ell (V)$ with $g|_{V_1}=f$, and the harmonicity for $x \in V\setminus V_1$ implies
\begin{equation} \label{eq4.0}
 {\sum}_{y\sim x} c(x, y) (\widetilde f(x) - \widetilde f(y)) =0, \qquad x \in V\setminus V_1.
\end{equation}
In the following, we give an expression of the minimal energy in terms of the conductance $c(x,y)$ of the chain on $V$.
\medskip

\begin{proposition} \label{th4.0}
Let $V$ be a finite set, and $V = V_1\cup V_2$ with $\# V_1 \geq 2$. Assume that there is a reversible Markov chain on $V$ with conductance $c(\cdot, \cdot)$. Then for $f \in V_1$,
\begin{equation} \label{eq5.03}
\min \big \{\mathcal E_V[g]: g \in \ell(V), g |_{V_1}=f \big \} = \frac 12 {\sum}_{x,y \in  V_1, x \neq y} c_\ast(x,y)(f(x)-f(y))^2,
\end{equation}
where $c_\ast(x,y) = c(x,y)+\sum_{z,w \in V_2} c(x,z)G_{ V_2}(z,w)P(w,y)$, $x,y \in  V_1$, $x \neq y$ (here $G_{ V_2}(\cdot,\cdot)$ is the Green function of the random walk restricted to $ V_2$), and it defines a conductance function on $V_1$.
\end{proposition}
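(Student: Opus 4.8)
The plan is to reduce the minimization to the harmonic extension (already recalled just before the statement) and then compute the energy of that extension as a \emph{Schur complement} of the graph Laplacian, finally matching the entries of this Schur complement with the claimed conductances $c_\ast$. Throughout I take $V_2 = V \setminus V_1$ as disjoint index sets, as in the stated formula.

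First I would introduce the combinatorial Laplacian $L$ of the chain, $(Lg)(x) = \sum_{y} c(x,y)(g(x)-g(y))$, so that $\mathcal E_V[g] = \langle g, Lg\rangle$, and split it into blocks along $V = V_1 \cup V_2$:
\[
L = \begin{pmatrix} A & B \\ B^{T} & D \end{pmatrix},
\]
where $A = (L_{x,y})_{x,y \in V_1}$, $D = (L_{z,w})_{z,w \in V_2}$, and $B = (L_{x,z})_{x \in V_1, z \in V_2}$. Writing an extension of $f$ as $g = (f,h)$ with $h \in \ell(V_2)$ free, we get $\mathcal E_V[g] = f^{T} A f + 2 f^{T} B h + h^{T} D h$; since the minimizer is the harmonic extension, equating the $h$-gradient to zero yields $h = -D^{-1}B^{T}f$ (the discrete version of \eqref{eq4.0}) and hence
\[
\min_{g|_{V_1}=f} \mathcal E_V[g] = f^{T}\bigl(A - B D^{-1} B^{T}\bigr) f =: f^{T} S f .
\]
Here $D$ is invertible precisely because the chain killed on exiting $V_2$ is transient, equivalently $G_{V_2}$ is finite, which is implicit in the statement.

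Next I would identify the entries of $S$. Off the diagonal $A_{x,y} = -c(x,y)$ and $B_{x,z} = -c(x,z)$. The key point is to relate $D^{-1}$ to the Green function: since the diagonal of $D$ is the \emph{full} total conductance $m(z)$ (not the conductance restricted to $V_2$), one has $D = M(I - P_{V_2})$ with $M = \mathrm{diag}(m(z))_{z \in V_2}$ and $P_{V_2} = (P(z,w))_{z,w \in V_2}$, so that $D^{-1} = (I - P_{V_2})^{-1} M^{-1} = G_{V_2} M^{-1}$, using $G_{V_2} = \sum_{n \ge 0} P_{V_2}^{n} = (I-P_{V_2})^{-1}$. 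Substituting, for $x \ne y$ in $V_1$,
\[
(B D^{-1} B^{T})_{x,y} = \sum_{z,w \in V_2} c(x,z)\, G_{V_2}(z,w)\, \frac{c(w,y)}{m(w)} = \sum_{z,w \in V_2} c(x,z)\, G_{V_2}(z,w)\, P(w,y),
\]
whence $-S_{x,y} = c(x,y) + \sum_{z,w} c(x,z) G_{V_2}(z,w) P(w,y) = c_\ast(x,y)$, exactly the stated expression.

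Finally I would convert $f^{T} S f$ into the difference form in \eqref{eq5.03}. The structural fact needed is that $S$ inherits zero row sums from $L$: writing $\mathbf 1 = (\mathbf 1_1, \mathbf 1_2)$, the relation $L\mathbf 1 = 0$ gives $B^{T}\mathbf 1_1 + D\mathbf 1_2 = 0$, so $\mathbf 1_2 = -D^{-1}B^{T}\mathbf 1_1$ and $S\mathbf 1_1 = A\mathbf 1_1 + B\mathbf 1_2 = 0$. For a symmetric matrix with zero row sums one has the elementary identity $f^{T}S f = \tfrac12 \sum_{x \ne y}(-S_{x,y})(f(x)-f(y))^2$, which is precisely the right-hand side of \eqref{eq5.03} with $c_\ast(x,y) = -S_{x,y}$. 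That $c_\ast$ is a genuine conductance then follows from nonnegativity $c_\ast(x,y) \ge 0$ (since $c \ge 0$ and $G_{V_2} \ge 0$) and symmetry $c_\ast(x,y) = c_\ast(y,x)$ (from symmetry of $S$, equivalently the reversibility relation $m(z)G_{V_2}(z,w) = m(w)G_{V_2}(w,z)$). I expect the main obstacle to be the clean identification $D^{-1} = G_{V_2} M^{-1}$: one must keep track of the fact that the diagonal of $D$ is the total conductance in the whole graph $V$ rather than in the subgraph $V_2$, since it is exactly this feature that converts the matrix inverse into the probabilistic Green function and produces the transition probabilities $P(w,y)$ in the formula for $c_\ast$.
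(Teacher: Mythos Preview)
Your proof is correct and takes a genuinely different route from the paper. The paper argues probabilistically: it writes the harmonic extension explicitly as $h(\cdot)=\sum_{y\in V_1}F^{V_1}(\cdot,y)f(y)$ via the hitting distribution $F^{V_1}(z,y)=\sum_{w\in V_2}G_{V_2}(z,w)P(w,y)$, and then computes $\mathcal E_V[h]$ by the summation-by-parts identity $\mathcal E_V[h]=\sum_{x,y}c(x,y)(h(x)-h(y))h(x)$, using harmonicity on $V_2$ to collapse the outer sum to $x\in V_1$ and rearranging until $c_\ast$ emerges. Your argument instead casts everything as linear algebra: block-decompose the Laplacian, take the Schur complement $S=A-BD^{-1}B^T$, identify $D^{-1}=G_{V_2}M^{-1}$, and read off $c_\ast(x,y)=-S_{xy}$; the difference form then drops out from the inherited zero-row-sum property of $S$. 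What each buys: the paper's computation keeps the probabilistic meaning of $c_\ast$ (hitting probabilities) in view throughout, while your Schur-complement approach is more mechanical and makes the symmetry of $c_\ast$ and the conversion to difference form completely structural rather than requiring a separate reversibility check. Both are standard, and your version is arguably cleaner for this particular statement.
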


\begin{proof}  Let $F^{ V_1}(x,y)=\mathbb P_x(Z_{t_{ V_1}}=y)$ where $t_{ V_1}$ is the first hitting time of $ V_1$. Then
$$
F^{V_1}(z,y)= {\sum}_{w\in V_2}G_{ V_2}(z,w)P(w,y),  \quad \forall \, z\in V_2, \ y \in V_1.
$$
 We can check directly from the definition that  $c_*(x, y) = c_*(y, x), \ x, y \in V_1,$
 using  the reversibility of the chain (i.e., $m(x)P(x,z) = m(z)P(z, x)$ and $m(z) G_{V_2} (z, w) = m(w)  G_{V_2} (w, z)$).
Hence $c_*(x, y)$ defines a conductance on $V_1$.

\vspace {0.1cm}

To prove \eqref{eq5.03}, we let $h(\cdot) = \sum_{y \in  V_1} F^{ V_1}(\cdot,y)f(y) \in \ell(V)$. Then it is easy to check that $h$ is the unique function such that $Ph=h$ on $V_2$ and $h=f$ on $ V_1$. Hence $\mathcal E_V[h] = \min \{\mathcal E_V[g]: g \in \ell(V), g |_{ V_1}=f\}$. Observe that
$$
\mathcal E_V[h] = \frac 12 \sum_{x,y \in V}c(x,y)(h(x)-h(y))^2 = \sum_{x,y \in V} c(x,y)(h(x)-h(y))h(x)
$$
Hence
\begin{align*}
&\mathcal E_V[h] = \sum_{x \in  V_1} h(x) \sum_{y \in V}c(x,y)(h(x)-h(y)) \quad \  \hbox{(by $Ph=h$ on $V_2$)} \\
&= \sum_{x \in  V_1} f(x) \Big(\sum_{y \in V_1}c(x,y)(f(x)-f(y)) + \sum_{y \in V_2} c(x,y)\sum_{z \in V_1} F^{V_1}(y,z)(f(x)-f(z))\Big)  \\
&= \sum_{x,y \in  V_1} f(x)(f(x)-f(y))\Big(c(x,y)+\sum_{z \in V_2}c(x,z) F^{V_1}(z,y)\Big)  \quad (\hbox {switch  $y$ and $z$})\\
&= \sum_{x,y \in  V_1} c_\ast(x,y)f(x)(f(x)-f(y)) \\
&= \frac 12 \sum_{x,y \in  V_1} c_\ast(x,y)(f(x)-f(y))^2. \qquad \  \hbox{(use $c_\ast(x,y)=c_\ast(y,x)$)}
\end{align*}
This yields \eqref{eq5.03}.
\end{proof}

For a finite connected graph $(X,\mathfrak E)$ with conductances, the {\it effective resistance} between two disjoint nonempty subsets $E,F \subset X$ is given by
\begin{equation} \label{eq4.1}
R_X(E,F) = (\min\{\en_X[f]:f \in \ell(X) \hbox{ with } f=1 \hbox{ on } E, \hbox{ and } f=0 \hbox{ on } F\})^{-1}.
\end{equation}
Also we set $R_X(E,F)=0$ if $E \cap F \neq \emptyset$ by convention. Clearly $R_X(\cdot, \cdot)$ is symmetric; the energy minimizer in \eqref{eq4.1} is unique,  bounded in between $0$ and $1$,  and is harmonic on $X\setminus(E\cup F)$.

\bigskip

For the $\lambda$-NRW on $(X, {\mathfrak E})$, for convenience and the simplicity in the estimations, we will assume slightly more that the conductance  on the horizontal edges satisfies
\begin{equation} \label{eq4.2}
c(\xxx,\yyy) = r^{\alpha|\xxx|}\lambda^{-|\xxx|}\quad \hbox{for } \xxx \sim_h \yyy \in X \setminus \{\vartheta\}.
\end{equation}
(we use $\asymp$ in \eqref{eq2.1}), and there is no change of the results. Let $\{\kappa_n\}_{n=0}^\infty$ be a  {\it $\kappa$-sequence} defined in Section \ref{sec:2}, i.e., each $\kappa_n$ is a selection map from $K$ to ${\mathcal J}_n$ such that for each $\xi \in K$, $\{\kappa_n(\xi)\}_{n=0}^\infty$ is a geodesic ray that converges to $\xi$. For any two closed subsets $\Phi$, $\Psi \subset K$, we define the {\rm level-$n$ resistance} between them (depend on $\kappa_n$)  by
\begin{equation} \label{eq4.3}
R_n^{(\lambda)}(\Phi, \Psi) := R_{X_n}(\kappa_n(\Phi), \kappa_n(\Psi)),
\end{equation}
where $X_n := \bigcup_{k=0}^n \mathcal J_k$ and has same conductance restricted from $X$.

\medskip

\begin{theorem} \label{th4.1}
Suppose $\{S_i\}_{i=1}^N$ satisfies the OSC, and let $\{Z_n\}_{n=0}^\infty$ be a $\lambda$-NRW on the augmented tree $(X,\mathfrak E)$ with $\lambda \in (0,r^\alpha)$. Then for any two closed subsets $\Phi$, $\Psi \subset K$, the limit $\lim_{n \to \infty} R_n^{(\lambda)}(\Phi, \Psi)$ exists, and is independent of the choice of the  $\kappa$-sequence.
\end{theorem}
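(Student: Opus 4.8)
The plan is to prove convergence from a one-sided geometric comparison between consecutive levels, and then to dispose of the dependence on the $\kappa$-sequence by the same mechanism. Throughout set $\tau=\lambda/r^\alpha\in(0,1)$, and use the dual (Thomson) description of effective resistance: for disjoint $E,F\subset X_n$,
$$
R_{X_n}(E,F)=\min\{\mathcal R_{X_n}[\theta]:\ \theta\ \text{a unit flow from}\ E\ \text{to}\ F\},
$$
where $\mathcal R_{X_n}[\theta]=\frac12\sum_{e}\theta(e)^2/c(e)$ is the dissipated energy. This is dual to the energy minimization in \eqref{eq4.1}; moreover, for the optimal flow the current $I_{\xxx}$ injected at a vertex $\xxx\in E$ is nonnegative, since by the maximum principle the associated potential takes its maximum on $E$ and its minimum on $F$.

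First I would establish that $R_{n+1}^{(\lambda)}(\Phi,\Psi)\le R_n^{(\lambda)}(\Phi,\Psi)+C\tau^{n+1}$. Let $\theta_n$ be the optimal unit flow on $X_n$ from $\kappa_n(\Phi)$ to $\kappa_n(\Psi)$, with injected currents $I_{\xxx}\ge0$ and $\sum_{\xxx}I_{\xxx}=1$. Because $\{\kappa_n\}$ is a $\kappa$-sequence, $\kappa_{n+1}(\xi)$ is a child of $\kappa_n(\xi)$, so each vertex of $\kappa_{n+1}(\Phi)$ hangs by a single vertical edge below a vertex of $\kappa_n(\Phi)$. I extend $\theta_n$ to a unit flow $\widetilde\theta$ on $X_{n+1}$ between the level-$(n+1)$ terminals by injecting at the children of each $\xxx\in\kappa_n(\Phi)$ a total current $I_{\xxx}$, distributed arbitrarily, and pushing it up the pendant edges to $\xxx$ (symmetrically on the $\Psi$-side), setting $\widetilde\theta=0$ on all level-$(n+1)$ horizontal edges and all unused pendants. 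Then $\widetilde\theta$ coincides with $\theta_n$ on $X_n$, and the only surplus dissipation comes from the pendant edges, each of resistance $c(\yyy,\yyy^-)^{-1}=\lambda^{|\yyy|}/r_{\yyy}^\alpha\le r^{-\alpha}\tau^{n+1}$. Hence the $\Phi$-side surplus is at most $r^{-\alpha}\tau^{n+1}\sum_{\xxx}I_{\xxx}^2\le r^{-\alpha}\tau^{n+1}$, using the per-parent estimate $\sum_j i_j^2\le(\sum_j i_j)^2=I_{\xxx}^2$ and then $\sum_{\xxx}I_{\xxx}^2\le(\sum_{\xxx}I_{\xxx})^2=1$; the $\Psi$-side is identical. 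The Thomson principle then gives the displayed bound with $C=2r^{-\alpha}$. Iterating, $R_m^{(\lambda)}\le R_n^{(\lambda)}+\frac{C}{1-\tau}\tau^{n+1}$ for all $m>n$; letting $m\to\infty$ and then $n\to\infty$ yields $\limsup_m R_m^{(\lambda)}\le\liminf_n R_n^{(\lambda)}$, so the limit exists and is finite.

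To see the limit is independent of the $\kappa$-sequence, let $\{\kappa_n\}$ and $\{\kappa'_n\}$ be two such sequences. By Lemma~\ref{th2.0}, every vertex of $\kappa_n(\Phi)$ lies within graph distance $1$ of $\kappa'_n(\Phi)$, i.e.\ is joined to a vertex of $\kappa'_n(\Phi)$ by a single level-$n$ horizontal edge of resistance $\asymp\tau^n$. I would then repeat the bridging estimate: starting from the optimal flow for $R_{X_n}(\kappa_n(\Phi),\kappa_n(\Psi))$, prepend to each source $\xxx$ the bridge edge into $\kappa'_n(\Phi)$ carrying current $I_{\xxx}$ (and likewise at the sinks). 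This produces a unit flow between the primed terminals whose surplus dissipation is at most $C\tau^n\sum_{\xxx}I_{\xxx}^2\le C\tau^n$. Hence $R_{X_n}(\kappa'_n(\Phi),\kappa'_n(\Psi))\le R_{X_n}(\kappa_n(\Phi),\kappa_n(\Psi))+C\tau^n$, and by symmetry the two level-$n$ resistances differ by at most $C\tau^n\to0$, so they share the same limit.

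The main obstacle is to keep the error geometrically small uniformly in the number of terminals, which grows with $n$. The decisive device is to argue with currents rather than potentials: the pendant/bridge dissipation is bounded by $C\tau^n\sum_{\xxx}I_{\xxx}^2$, and the elementary inequality $\sum_{\xxx}I_{\xxx}^2\le(\sum_{\xxx}I_{\xxx})^2=1$ controls it by $C\tau^n$ no matter how many source vertices occur. A term-by-term estimate in the potential (Dirichlet) formulation would instead accumulate a factor proportional to the number of level-$n$ terminals and break down. A minor point still to check is that distinct bridge edges overlap with only bounded multiplicity, which is immediate from the uniform bound on the vertex degrees of the augmented tree.
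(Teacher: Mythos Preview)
Your flow-based (Thomson) argument is correct in spirit and is a genuinely different route from the paper's proof. The paper works entirely in the Dirichlet picture: it picks a subsequence with $R_{n_k}\to\limsup R_n$, uses the potential oscillation bound $|f_{n_k}(\kappa_n(\xi))-f_{n_k}(\kappa_{n_k}(\xi))|\le C\tau^{n/2}$ along geodesic rays, and then compares energies via the trace formula of Proposition~\ref{th4.0} to get $\en_{n_k}\ge(1-2\varepsilon(n))^2\en_n$; independence of the $\kappa$-sequence is handled by a separate technical estimate (Lemma~\ref{lem}) on the near-boundary energy. Your approach bypasses all of this: the recursion $R_{n+1}\le R_n+C\tau^{n+1}$ via pendant extension is clean (the pendant edges are genuinely new, so no overlap with $\theta_n$), and the key device $\sum_\xxx I_\xxx^2\le(\sum_\xxx I_\xxx)^2=1$ for nonnegative injected currents is exactly what avoids the blow-up in the number of terminals. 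This is shorter and conceptually nicer than the paper's argument.

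One genuine gap to patch in your independence step: unlike the pendant edges at level $n+1$, the horizontal bridge edges at level $n$ may already carry current in the optimal flow $\theta$, so the ``surplus dissipation'' is not simply $\sum_\xxx I_\xxx^2/c_e$. Write the modified flow as $\theta+\delta$ where $\delta$ is the bridge correction; you have shown $\mathcal R[\delta]\le C\tau^n$, but what you need is $\mathcal R[\theta+\delta]$. The triangle inequality for the energy seminorm gives $\sqrt{\mathcal R[\theta+\delta]}\le\sqrt{\mathcal R[\theta]}+\sqrt{\mathcal R[\delta]}$, hence $R'_n\le R_n+2\sqrt{C R_n\,\tau^n}+C\tau^n$. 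Since your first step already shows $\{R_n\}$ is bounded, this yields $|R_n-R'_n|=O(\tau^{n/2})\to 0$, which is all you need. You should also note (as the paper does) that the case $\Phi\cap\Psi\neq\emptyset$ is handled separately: then $\kappa_n(\Phi)$ and $\kappa_n(\Psi)$ are at graph distance $\le 1$ for every $n$, so $R_n\le C\tau^n\to 0$ directly.
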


We will prove a technical lemma first. For $E, F \subset {\mathcal J}_n$ such that in the graph distance, ${\rm dist} (E, F)>2$, we define
\begin{equation}\label{eq4.3'}
\partial E = \{\xxx \in {\mathcal J}_n: {\rm dist}(\xxx, E) =1\}, \quad \partial F = \{\xxx \in {\mathcal J}_n: {\rm dist}(\xxx, F) =1\}.
\end{equation}
Let ${\mathcal E}_n := {\mathcal E}_n(E, F) = \min\{\en_{X_n}[f]:f \in \ell(X_n), \   f=1 \hbox{ on } E, \ f=0 \hbox{ on } F\}$, and let $f_n$ be the corresponding energy minimizing function.

\medskip

\begin{lemma} \label{lem} Consider the $\lambda$-NRW on $(X, {\mathfrak E})$ with $\lambda \in (0, r^\alpha)$. Let $\{E_n\}_{n \geq 1}, \{F_n\}_{n \geq 1}$ be two sequences such that $E_n,F_n \subset \mathcal J_n$, and $\liminf_{n \to \infty} {\rm dist} (E_n, F_n)>2$. If $\sup_{n \geq 1} \en_n (E_n,F_n)< \infty$, then for any $\varepsilon >0$, there exists $n_0$ such that for $n\geq n_0$,
$$
\sum_{\xxx \in \partial E_n}\sum_{\yyy \in X_n \setminus E_n}c(\xxx, \yyy) (1-f_n(\yyy))^2 < \varepsilon, \ \ \sum_{\xxx \in \partial F_n}\sum_{\yyy \in X_n \setminus F_n}c(\xxx, \yyy) f_n(\yyy)^2 < \varepsilon,
$$
where $f_n$ is the energy minimizer of ${\mathcal E}_n :={\mathcal E}_n (E_n, F_n)$.
\end{lemma}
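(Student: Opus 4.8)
The plan is to prove the two estimates in parallel; by the symmetry $f_n\leftrightarrow 1-f_n$ (which interchanges $E_n$ and $F_n$) it suffices to treat the first sum. Write $g_n=1-f_n$, so that $g_n$ is the energy minimizer with $g_n=0$ on $E_n$, $g_n=1$ on $F_n$, $0\le g_n\le 1$, and $g_n$ is harmonic on $X_n\setminus(E_n\cup F_n)$; in particular $g_n$ is harmonic at every $\xxx\in\partial E_n$, since $\operatorname{dist}(E_n,F_n)>2$ forces $\partial E_n\subset X_n\setminus(E_n\cup F_n)$. Set $\tau=\lambda/r^\alpha<1$ as in the proof of Lemma~\ref{th3.3}, and recall from \eqref{eq4.2} that every edge meeting ${\mathcal J}_n$ has conductance $\asymp\tau^{-n}$, while the edges joining $E_n$ to $\partial E_n$ are horizontal with conductance exactly $\tau^{-n}$. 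The target is $S_n:=\sum_{\xxx\in\partial E_n}\sum_{\yyy\in X_n\setminus E_n}c(\xxx,\yyy)g_n(\yyy)^2$, and the difficulty to keep in mind is that a naive pointwise bound $g_n(\yyy)^2\lesssim\tau^n$ against the weights $c\asymp\tau^{-n}$ leaves an uncontrolled factor $\#\partial E_n$, which the hypothesis $\sup_n{\mathcal E}_n\le M$ does not bound.

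First I would show that $g_n$ is uniformly small in a bounded neighbourhood of $E_n$. The total current leaving $E_n$ equals the effective conductance, i.e. $\sum c(\xxx',\yyy)g_n(\yyy)={\mathcal E}_n$, the sum being over edges from $\xxx'\in E_n$ to $\yyy\in\partial E_n$. Writing $B_n=\sum c(\xxx',\yyy)g_n(\yyy)^2$ for the first-layer energy and $G_n=\max_{\partial E_n}g_n$, the flux identity gives $B_n\le G_n{\mathcal E}_n$, while $G_n^2\le\tau^n B_n$ because each point of $\partial E_n$ carries a conductance-$\tau^{-n}$ edge into $E_n$. Combining, $B_n\le\tau^n{\mathcal E}_n^2$ and $G_n\le\tau^n{\mathcal E}_n$; one further step with the per-edge bound $|g_n(\yyy)-g_n(\xxx)|\le\sqrt{{\mathcal E}_n/c(\xxx,\yyy)}\lesssim\sqrt{\tau^n{\mathcal E}_n}$ then yields $t_n:=\max\{g_n(\yyy):\operatorname{dist}(\yyy,E_n)\le 2\}\lesssim\sqrt{\tau^n{\mathcal E}_n}\to0$.

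The crux is the passage from this pointwise smallness to the weighted sum, and here I would use a co-area (level-set) decomposition of the energy. Harmonicity of $g_n$ on $\partial E_n$ together with the identity $\sum_{\yyy}c(\xxx,\yyy)(g_n(\xxx)-g_n(\yyy))^2=\sum_{\yyy}c(\xxx,\yyy)g_n(\yyy)^2-m(\xxx)g_n(\xxx)^2$ lets one rewrite $S_n=I_n+II_n$, where $I_n=\sum_{\partial E_n}m(\xxx)g_n(\xxx)^2\le D\,B_n\to0$ ($D$ the uniform degree bound) and $II_n$ is the graph energy carried by the edges incident to $\partial E_n$. Every such edge has both endpoints within distance $2$ of $E_n$, hence lies in the sublevel region $\{g_n\le t_n\}$. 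Now for the harmonic minimizer the current across each level set $\{g_n=s\}$ points uniformly toward $F_n$ and has total mass ${\mathcal E}_n$, so the unsigned flux equals the net flux; the co-area identity ${\mathcal E}_n=\int_0^1(\text{current across }\{g_n=s\})\,ds$ then shows that the energy supported on edges contained in $\{g_n\le t_n\}$ is at most $t_n{\mathcal E}_n$. Therefore $II_n\le 2t_n{\mathcal E}_n\to0$, whence $S_n=I_n+II_n\to0$; running the same argument with $f_n$ (which vanishes on $F_n$) in place of $g_n$ handles the $F_n$-sum.

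The step I expect to be the main obstacle is exactly this last one: the factor $\#\partial E_n$ is genuinely unbounded under the hypotheses, so no pointwise estimate can succeed, and one must exploit that the energy cannot concentrate in a thin sublevel region. The co-area bound is what makes this precise, and its validity hinges on the sign-definiteness of the current of the harmonic minimizer across its own level sets. The three hypotheses enter precisely here: $\operatorname{dist}(E_n,F_n)>2$ guarantees that the distance-$2$ neighbourhood of $E_n$ avoids $F_n$ (so $g_n$ is harmonic there and stays small), $\lambda\in(0,r^\alpha)$ gives $\tau<1$ so that $t_n\to0$, and $\sup_n{\mathcal E}_n<\infty$ keeps every constant uniform in $n$.
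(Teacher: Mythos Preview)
Your argument is correct and takes a genuinely different route from the paper. Both proofs start the same way: write $g_n=1-f_n$, use the boundary-flux identity $\sum c(\xxx',\yyy)g_n(\yyy)=\mathcal E_n$ to deduce $\max_{\partial E_n}g_n\le\tau^n\mathcal E_n$ (your $G_n$; the paper's \eqref{eq4.7'}), and then face the real issue that $\#\partial E_n$ is unbounded. From here the paper proceeds purely computationally: it applies $\sum a_i^2\le(\sum a_i)^2$ with $a_i=c^{1/2}(1-f_n(\yyy))$, pulls out one factor $c^{-1/2}\asymp\tau^{n/2}$, and then splits $1-f_n(\yyy)=(1-f_n(\xxx))+(f_n(\xxx)-f_n(\yyy))$, bounding the two pieces by a second flux identity \eqref{eq4.8'} and the bound just obtained; this yields the explicit rate $S_n\lesssim\tau^n\mathcal E_n^2$. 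Your route instead uses harmonicity at $\partial E_n$ to rewrite $S_n=I_n+II_n$ with $I_n=\sum_{\partial E_n}m(\xxx)g_n(\xxx)^2\lesssim B_n\le\tau^n\mathcal E_n^2$ and $II_n$ the energy on edges incident to $\partial E_n$, and then controls $II_n$ by the co-area inequality: since the minimizing current has constant flux $\mathcal E_n$ across every level $\{g_n=s\}$ and edges incident to $\partial E_n$ lie entirely in the sublevel $\{g_n\le t_n\}$, their total energy is at most $t_n\mathcal E_n\lesssim\tau^{n/2}\mathcal E_n^{3/2}$. The paper's computation is shorter and gives a sharper rate ($\tau^n$ versus your $\tau^{n/2}$), but your level-set argument explains conceptually why the hypothesis $\sup_n\mathcal E_n<\infty$ defeats the unbounded cardinality of $\partial E_n$: a bounded-energy flow cannot deposit more than $t\mathcal E_n$ units of energy inside a sublevel set of height $t$.
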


\begin{proof} We write $f=f_n$ for simplicity. We observe that
\begin{align} \label{eq4.7'}
\en_n &= \frac 12 \sum_{\xxx,\yyy \in X_n} c(\xxx,\yyy)(f(\xxx)-f(\yyy))^2  \nonumber\\
&= \sum_{\xxx,\yyy \in X_n} c(\xxx,\yyy)f(\xxx)(f(\xxx)-f(\yyy)) \nonumber\\
&= \sum_{\xxx\in E_n \cup F_n} f(\xxx)\sum_{ \yyy \in X_n} c(\xxx,\yyy)(f(\xxx)-f(\yyy))  \qquad (\hbox {by} \ \eqref {eq4.0}) \nonumber\\
&= \sum_{\xxx \in E_n} \sum_{\yyy \in X_n} c(\xxx,\yyy)(1-f(\yyy))
\ \geq \ (r^\alpha / \lambda)^n \sum_{\yyy \in \partial E_n}(1-f(\yyy)).
\end{align}
Thus $f(\xxx) \geq 1-(\lambda/r^\alpha)^n \en_n$ for $\xxx \in \partial E_n$. Using a similar argument, and that $f$ is harmonic on $X_n \setminus (E_n  \cup F_n \cup \partial E_n)$,  for large n, we have
\begin{align} \label{eq4.8'}
\en_n &\geq \frac 12 \sum_{\xxx,\yyy \in X_n} c(\xxx,\yyy)(f(\xxx)-f(\yyy))^2 - \sum_{\xxx \in \partial E_n} \sum_{\yyy \in E_n} c(\xxx,\yyy)(f(\xxx)-f(\yyy))^2 \nonumber \\
&=\Big(\sum_{\xxx \in {E_n \cup F_n\cup \partial E_n}} \sum_{\yyy \in X_n} - \sum_{\xxx \in {E_n }} \sum_{\yyy \in \partial E_n} -\sum_{\xxx \in \partial E_n} \sum_{\yyy \in E_n}\Big) c(\xxx,\yyy)f(\xxx)(f(\xxx)-f(\yyy)) \nonumber \\
&= \sum_{\xxx \in \partial E_n}f(\xxx) \sum_{\yyy \in X_n \setminus E_n} c(\xxx,\yyy)(f(\xxx)-f(\yyy)) + \sum_{\yyy \in E_n} c(\yyy,\yyy^-)(1-f(\yyy^-)) \nonumber \\
&\geq \big(1-(\lambda/r^\alpha)^n \en_n \big) \sum_{\xxx \in \partial E_n}\sum_{\yyy \in X_n \setminus E_n} c(\xxx,\yyy)(f(\xxx)-f(\yyy)).
%&= \big(1-(\lambda/r^\alpha)^n \en_n \big) \sum_{\xxx \in \partial_1E}\sum_{\yyy \in \partial_2E \cup \{\xxx^-\}} c(\xxx,\yyy)(f_n(\xxx)-f_n(\yyy)).
\end{align}
(The last inequality holds because for  $\xxx \in \partial E_n$, $\sum_{\yyy \in X_n \setminus E_n} c(\xxx,\yyy)(f(\xxx)-f(\yyy)) \geq 0$, as by harmonicity,
$
\sum_{\yyy \in X_n \setminus E_n} \cdots = - \sum_{\yyy \in E_n} \cdots  = - \sum_{\yyy \in E_n} c(\xxx,\yyy)(f(\xxx)-1)\geq 0
$.)

\vspace {0.1cm}

Now we use \eqref{eq4.7'} and \eqref{eq4.8'} to make the final estimate:
\begin{align} \label{eq4.9'}
&\hspace{4.5mm} \sum_{\xxx \in \partial E_n} \sum_{\yyy \in X_n \setminus E_n} c(\xxx,\yyy)(1-f(\yyy))^2 \leq \Big (\sum_{\xxx \in \partial E_n} \sum_{\yyy \in X_n \setminus E_n} c(\xxx,\yyy)^{1/2}(1-f(\yyy))\Big)^2 \nonumber \\
&\leq \frac{\lambda^n}{r^{\alpha(n+1)}}\Big(\sum_{\xxx \in \partial E_n} \sum_{\yyy \in X_n \setminus E_n} c(\xxx,\yyy)(1-f(\yyy))\Big)^2
\nonumber \\
&= \frac{\lambda^n}{r^{\alpha(n+1)}} \Big(\sum_{\xxx \in \partial E_n} \sum_{\yyy \in X_n \setminus E_n} c(\xxx,\yyy)\big((1-f(\xxx))+(f(\xxx)-f(\yyy))\big)\Big)^2 \nonumber\\
&\leq \frac{\lambda^n}{r^{\alpha(n+1)}}\left(k\en_n + \frac{\en_n}{1-(\lambda/r^\alpha)^n \en_n}\right)^2 =: \varepsilon(n) \hspace{8mm} \hbox{\big(by \eqref{eq4.7'}, \eqref{eq4.8'}\big)}
\end{align}
 where $k=\sup_{\xxx \in X} \# \{\yyy : \xxx \sim_h \yyy \}$ (as the graph $(X,\ed)$ has bounded degree, and $c(\xxx, \yyy)>0$ only when $\xxx \sim_h \yyy$ or $\yyy = \xxx^-$). Hence we can choose $n_0$ such that $\varepsilon (n) <\varepsilon$  for $n>n_0$.
 Analogously, using $1-f$ instead of $f$, we obtain the estimate for $F$ as well.
\end{proof}

\medskip

\begin{proof}[Proof of Theorem \ref{th4.1}]
We fix a $\lambda \in (0, r^\alpha)$ and omit the superscript $(\lambda)$ in this proof. First we fix a $\kappa$-sequence $\{\kappa_n\}_{n=0}^\infty$, and prove that  $\lim_{n \to \infty} R_n(\Phi, \Psi)$ exists.  For brevity, we write $\Phi_n :=\kappa_n(\Phi)$ and $\Psi_n := \kappa_n(\Psi)$. If $\Phi \cap \Psi \neq \emptyset$, then by the property of geodesic rays in $(X, {\mathfrak E})$, for any $n$, either $\Phi_n \cap \Psi_n \neq \emptyset$ or $\min\{d(\xxx,\yyy): \xxx \in \Phi_n,\, \yyy \in \Psi_n\} = 1$ (by Lemma \ref{th2.0}). In both situations, we have $\lim_{n \to \infty} R_n(\Phi, \Psi)=0$ (for the second case,  by \eqref{eq4.1}, $R_n(\Phi, \Psi) \leq (r^{\alpha n} \lambda^{-n})^{-1} = (\lambda / r^\alpha)^n$).

\vspace{1mm}

 Hence we assume that  $\Phi \cap \Psi = \emptyset$. Then there exists $\ell>0$ such that for $n\geq \ell$,  ${\rm dist}(\Phi_n, \Psi_n) > 3$.
By \eqref{eq4.1} and \eqref{eq4.3}, for $n \geq \ell$,
\begin{equation} \label{eq4.7}
R_n(\Phi,\Psi) = (\min \{\en_{X_n}[f]: f=1 \hbox{ on } \Phi_n, \hbox{ and } f=0 \hbox{ on } \Psi_n\})^{-1}.
\end{equation}
Let $\en_n$ denote the minimal energy, and let $f_n \in \ell(X_n)$ be the energy minimizer in \eqref{eq4.7}.
Let $\{n_k\}_{k \geq 1}$  with $n_k \geq \ell$ be the subsequence such that
$
\lim_{k \to \infty} R_{n_k}(\Phi, \Psi) = \\ \limsup_{n \to \infty} R_n(\Phi, \Psi)>0.
$
(otherwise $\lim_{n \to \infty} R_n(\Phi, \Psi)=0$). Then $\sup_k \en_{n_k} < \infty$. For $n < n_k$ and $\xi \in K$, by Lemma \ref{th3.3}(i), we have
\begin{align} \label{eq4.5'}
|f_{n_k}(\kappa_n(\xi)) - f_{n_k}(\kappa_{n_k}(\xi))| & \ \leq \ \sum_{m=1}^{n_k-n}  |f_{n_k}(\kappa_{n+m-1}(\xi))-f_{n_k}(\kappa_{n+m}(\xi))| \nonumber\\
& \leq \ C\Big (\frac {\lambda}{r^\alpha}\Big)^{n/2}  := \varepsilon(n).
\end{align}
As $\lambda \in (0,r^\alpha)$, $\lim_{n \to \infty} \varepsilon(n) = 0$.
Let $V_1 = \Phi_n \cup \Psi_n$. Then for sufficiently large $n$ and $n_k > n$, we have $\varepsilon(n) < \frac 12$, and
\begin{align*}
\en_{n_k} &\geq \en_{X_n}[f_{n_k}] \geq \min\{\en_{X_n}[f]: f \in \ell(X_n), f=f_{n_k} \hbox{ on } V_1\} \\
&= \frac 12 \sum_{\xxx,\yyy \in V_1} c_*(\xxx,\yyy)(f_{n_k}(\xxx)-f_{n_k}(\yyy))^2 \hspace{17mm} \hbox{(by Proposition \ref{th4.0})} \\
&\geq \sum_{\xxx \in \Phi_n} \sum_{\yyy \in \Psi_n} c_*(\xxx,\yyy)(f_{n_k}(\xxx)-f_{n_k}(\yyy))^2 \\
&\geq \sum_{\xxx \in \Phi_n} \sum_{\yyy \in \Psi_n} c_*(\xxx,\yyy)((1-2\varepsilon(n))^2
\hspace{23mm} \hbox{(by \eqref{eq4.5'})}\\
&= \min\{\en_{X_n}[f]: f \in \ell(X_n), f|_{\Phi_n} = 1-2\varepsilon(n), f|_{\Psi_n} = 0\} \\
&\hspace{80mm} \hbox{(by Proposition \ref{th4.0})} \\
&= (1-2\varepsilon(n))^2 \en_n.
\end{align*}
Therefore,  $R_n(\Phi, \Psi) \geq (1-2\varepsilon(n))^2 R_{n_k}(\Phi, \Psi)$ for any large $n$ and $n_k > n$. Taking limit, we have
$$\liminf_{n \to \infty} R_n(\Phi, \Psi) \geq \lim_{k \to \infty} R_{n_k}(\Phi, \Psi) = \limsup_{n \to \infty} R_n(\Phi, \Psi).$$
Hence $\lim_{n \to \infty} R_n(\Phi, \Psi)$ exists.

\vspace{2mm}

Next we show that the above limit is independent of the choice of the $\kappa$-sequence.
For this, we define
\begin{equation*}
\partial\Phi_n = \{\xxx \in {\mathcal J}_n: d(\xxx,\Phi_n)=1\}, \quad
\partial\Psi_n = \{\xxx \in {\mathcal J}_n: d(\xxx,\Psi_n)=1\}
\end{equation*}
as in \eqref{eq4.3'}. For any other $\kappa$-sequences $\{\kappa_n'\}_n$, it follows from Lemma \ref{th2.0} that $\kappa'_n(\Phi) \subset \Phi_n \cup \partial \Phi_n$ and $\kappa'_n(\Psi) \subset  \Psi_n \cup \partial \Phi_n$. Hence it suffices to show that
\begin{equation} \label{4.7}
\lim_{n \to \infty} R_{X_n}(\Phi_n \cup \partial\Phi_n, \Psi_n \cup \partial \Phi_n) = \lim_{n \to \infty} R_n(\Phi,\Psi).
\end{equation}
Without loss of generality, we assume $\lim_{n \to \infty} R_n(\Phi,\Psi)>0$. Then $\sup_n \en_n < \infty$.
Let $h_n \in \ell(X_n)$ with $h_n=1$ on $\partial\Phi_n$, $h_n=0$ on $\partial\Psi_n$, and $h_n=f_n$ on $X_n \setminus (\partial\Phi_n \cup \partial\Psi_n)$.
\begin{equation*}
0 \leq R_{X_n}(\Phi_n \cup \partial\Phi_n, \Psi_n \cup \partial \Phi_n)^{-1} - R_n(\Phi,\Psi)^{-1} \leq \en_{X_n}[h_n]-\en_{X_n}[f_n] .
\end{equation*}
Then by Lemma \ref{lem}, for given $\varepsilon$, and for large $n$,  $\en_{X_n}[h_n]-\en_{X_n}[f_n] \leq 2\varepsilon$. This implies
\eqref{4.7} and proves the theorem.
\end{proof}

\bigskip
Theorem \ref {th4.1} implies the following definition is well defined.

\begin{definition} \label{de4.2}
With the same assumption as in Theorem \ref{th4.1}, we define the {\rm limiting resistance} between two closed subsets $\Phi$ and $\Psi$ in $K$ by
\begin{equation} \label{eq4.4}
R^{(\lambda)}(\Phi, \Psi) := \lim_{n \to \infty} R_n^{(\lambda)}(\Phi, \Psi).
\end{equation}
(We omit the superscript $(\lambda)$ if there is no confusion.)
\end{definition}

\section{The critical exponents of  ${\mathcal D}_X$}
\label{sec:5}

\noindent We first establish a basic result on the existence of nonconstant functions in $\dom_K$.

\medskip

\begin{theorem} \label{th4.2}
With the same assumption as in Theorem \ref{th4.1}, suppose $\Phi,\Psi $ are two closed subsets of $K$ satisfying $R(\Phi,\Psi)>0$. Then there exists $u :=u_{\Phi,\Psi}  \in \dom_K$ such that $u=1$ on  $\Phi$, and  $u=0$ on $\Psi$. Moreover, $u_{\Phi,\Psi}$ is the unique energy minimizer in ${\mathcal D}_K$ in the following sense
\begin{align} \label{eq4.15'}
R(\Phi,\Psi)^{-1} = \en_K[u_{\Phi,\Psi}] = \inf\{\en_K[u']: u' \in \dom_K \hbox{ with } u'=1 \hbox{ on } \Phi, \ u'= 0  \hbox { on }  \Psi \}.
\end{align}
\end{theorem}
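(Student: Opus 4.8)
The plan is to realize $u_{\Phi,\Psi}$ as the trace of a suitable limit of the finite-level energy minimizers, and then to pin down its energy from below by a comparison argument valid for every competitor. Write $R=R(\Phi,\Psi)\in(0,\infty)$. For $n$ large enough that ${\rm dist}(\kappa_n(\Phi),\kappa_n(\Psi))>3$ (as in the proof of Theorem \ref{th4.1}), let $f_n\in\ell(X_n)$ be the energy minimizer defining $R_n(\Phi,\Psi)$, so that $f_n=1$ on $\kappa_n(\Phi)$, $f_n=0$ on $\kappa_n(\Psi)$, $0\le f_n\le1$, and $\en_{X_n}[f_n]=R_n^{-1}$. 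Since $R_n\to R>0$ by Theorem \ref{th4.1}, we have $M:=\sup_n\en_{X_n}[f_n]<\infty$.

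I would first construct the candidate. As each $f_n$ takes values in $[0,1]$ and $X$ is countable, a diagonal extraction gives a subsequence $(n_k)$ with $f_{n_k}(\xxx)\to f(\xxx)$ for every $\xxx\in X$. Testing $\en_X$ against an arbitrary finite set of edges and using pointwise convergence, lower semicontinuity yields $\en_X[f]\le\liminf_k\en_{X_{n_k}}[f_{n_k}]=R^{-1}<\infty$, so $f\in\dom_X$. Moreover, for fixed $\xxx$ and all $n_k>|\xxx|$ every neighbour of $\xxx$ lies in $X_{n_k}$, so the harmonicity relation \eqref{eq4.0} for $f_{n_k}$ reads $Pf_{n_k}(\xxx)=f_{n_k}(\xxx)$; passing to the limit in this finite sum shows $f\in{\mathcal H}{\mathcal D}_X$. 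Hence $u:={\rm Tr}f\in\dom_K$ (Corollary \ref{th3.6'}), $f=Hu$ (Theorem \ref{th3.6}), and $\en_K[u]=\en_X[f]\le R^{-1}$. For the boundary values, fix $\xi\in\Phi$: for $n<n_k$ the exponential estimate \eqref{eq4.5'} (its constant controlled by $M$) gives $|f_{n_k}(\kappa_n(\xi))-f_{n_k}(\kappa_{n_k}(\xi))|\le C(\lambda/r^\alpha)^{n/2}$, and since $f_{n_k}(\kappa_{n_k}(\xi))=1$, letting $k\to\infty$ then $n\to\infty$ forces ${\rm Tr}f(\xi)=\lim_nf(\kappa_n(\xi))=1$ (using $0\le f\le1$); the identical argument with value $0$ gives ${\rm Tr}f\equiv0$ on $\Psi$. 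Thus $u$ is admissible for \eqref{eq4.15'}.

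The remaining and, I expect, hardest step is the lower bound $\en_K[u']\ge R^{-1}$ for every admissible $u'\in\dom_K$, the difficulty being that such a $u'$ meets the boundary data only in the limit and not at any finite level. I would first replace $u'$ by its truncation to $[0,1]$, a contraction that does not increase $\en_K$ while preserving the boundary values; then $Hu'$ is harmonic with $0\le Hu'\le1$ and $\en_X[Hu']=\en_K[u']$. Put $a_n=\sup_{\xi\in\Psi}Hu'(\kappa_n(\xi))$ and $b_n=\sup_{\xi\in\Phi}(1-Hu'(\kappa_n(\xi)))$; by the uniform Fatou-type convergence ${\rm Tr}Hu'=u'$ (Corollary \ref{th3.2}) both tend to $0$. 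For $n$ large the rescaled, truncated function $\tilde g=\big((Hu'-a_n)/(1-a_n-b_n)\vee0\big)\wedge1$ equals $1$ on $\kappa_n(\Phi)$ and $0$ on $\kappa_n(\Psi)$, hence is admissible for $R_n$; comparing energies (truncation contracts edgewise, restriction to $X_n$ discards edges, the affine rescaling multiplies energy by $(1-a_n-b_n)^{-2}$) gives $R_n^{-1}\le(1-a_n-b_n)^{-2}\en_K[u']$, and letting $n\to\infty$ yields $\en_K[u']\ge R^{-1}$. Combined with $\en_K[u]\le R^{-1}$ and the admissibility of $u$, this establishes the chain \eqref{eq4.15'}.

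Finally, uniqueness follows from the parallelogram law for the quadratic form $\en_K$: if $u,v$ both realize the minimum $R^{-1}$, then $(u+v)/2$ is admissible, whence $\en_K[(u-v)/2]=\tfrac12(\en_K[u]+\en_K[v])-\en_K[(u+v)/2]\le0$; since $\Theta(\xi,\eta)>0$ off the diagonal, $\en_K[u-v]=0$ forces the continuous function $u-v$ (both lie in $C^{(\beta-\alpha)/2}(K)$ by Proposition \ref{th2.4}) to be constant, and that constant is $0$ because $u=v=1$ on $\Phi$.
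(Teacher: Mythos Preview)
Your proof is correct and follows the paper's strategy closely: the construction of $f$ via diagonal extraction, the Fatou-type bound $\en_X[f]\le R^{-1}$, the verification of harmonicity and of the boundary values through the geometric-rate estimate, and the identification $u={\rm Tr}f$ are all exactly as in the paper. The one tactical difference is in the lower bound: the paper reduces $\en_{X_n}[Hu']$ via Proposition \ref{th4.0} to an effective network on $V_1=\kappa_n(\Phi)\cup\kappa_n(\Psi)$ and then reads off $(1-2\varepsilon)^2 R_n^{-1}$ from the induced conductance $c_\ast$, whereas you bypass Proposition \ref{th4.0} entirely by using the affine-truncation competitor $\tilde g$ directly. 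Your route is slightly more elementary (it needs only the Markov/contraction property of the Gagliardo energy and monotonicity under restriction), while the paper's route makes the connection to the effective network on $V_1$ explicit, which is thematically consistent with the rest of Section \ref{sec:4}. Your uniqueness argument via the parallelogram law and positivity of $\Theta$ spells out what the paper abbreviates as ``strict convexity of $\en_K$ in $\dom_K$''.
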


\begin{proof} First we show that the set on the right hand side in \eqref{eq4.15'} is non-empty. Clearly $\Phi \cap \Psi = \emptyset$ (otherwise $R(\Phi,\Psi) = 0$). Fix a $\kappa$-sequence $\{\kappa_n\}_n$. As in the proof of Theorem \ref{th4.1}, there exists a positive integer $\ell$ such that $\kappa_n(\Phi) \cap \kappa_n(\Psi) = \emptyset$ for all $n \geq \ell$,  let $f_n \in \ell(X_n)$ be the energy minimizer for $\kappa_n(\Phi)$ and $\kappa_n(\Psi)$ as in \eqref{eq4.7}. We extend $f_n$ to $X$ by setting $f_n(\xxx) = 0$ for $\xxx \in X \setminus X_n$, then $f_n$ is harmonic on $X_{n-1}$. Note that $0 \leq f_n \leq 1$ for all $n \geq \ell$. Hence for each $\xxx \in X$, there exists a convergent subsequence of $\{f_n(\xxx)\}_{n \geq \ell}$. By the diagonal argument, we can find a subsequence $\{f_{n_k}\}_{k \geq 1}$ with $n_1 \geq \ell$ such that $f_{n_k}$ converges to a function $f=: f_{\Phi, \Psi} \in \ell(X)$ pointwise. We claim that
\begin{enumerate}
\item[(a)] $f \in {\mathcal H}{\mathcal D}_X$ and $0 \leq f \leq 1$ on $X$;
\item[(b)] For any $\xi \in \Phi$, $\lim_{n \to \infty} f(\kappa_n(\xi)) = 1$;
\item[(c)] For any $\eta \in \Psi$, $\lim_{n \to \infty} f(\kappa_n(\eta)) = 0$.
\end{enumerate}
In fact, as $f_{n_k}$ is harmonic on $X_{n_k-1}$, the pointwise limit $f$ is harmonic on $X$. For $k \geq 1$, let $g_k$ be the function on the edge set $\ed$ defined by: for $(\xxx,\yyy) \in \ed$,
\begin{equation*}
g_k(\xxx,\yyy) = \begin{cases}
\ c(\xxx,\yyy)(f_{n_k}(\xxx)-f_{n_k}(\yyy))^2, & \quad \hbox{if } \xxx,\yyy \in X_{n_k}, \\
\ 0, & \quad \hbox{otherwise}.
\end{cases}
\end{equation*}
Then $\en_{n_k}:=\en_{X_{n_k}}[f_{n_k}] = \frac{1}{2}\sum_{(\xxx,\yyy) \in \ed} g_k(\xxx,\yyy)$, and $\lim_{k \to \infty} g_k(\xxx,\yyy) = c(\xxx,\yyy)(f(\xxx)-f(\yyy))^2$. By Fatou's Lemma, we have
\begin{align} \label{eq4.14'}
\en_X[f] &= \dfrac{1}{2}\sum_{(\xxx,\yyy) \in \ed} c(\xxx,\yyy)(f(\xxx)-f(\yyy))^2 = \dfrac{1}{2}\sum_{(\xxx,\yyy) \in \ed} \left(\lim\limits_{k \to \infty} g_k(\xxx,\yyy)\right) \nonumber \\
&\leq \dfrac{1}{2}\liminf_{k \to \infty} \sum_{(\xxx,\yyy) \in \ed} g_k(\xxx,\yyy) = \lim_{k \to \infty} \en_{n_k} = R(\Phi,\Psi)^{-1} < \infty.
\end{align}
Hence (a) follows.

\vspace{0.1cm}

  To prove (b), observe that $R(\Phi,\Psi) > 0$ implies that $\sup_{k \geq 1} \en_{n_k}< \infty$. Hence for any $k \geq 1$, $n < n_k$ and $\xi \in \Phi$, by Lemma \ref{th3.3}(i)
\begin{align*}
|f_{n_k}(\kappa_n(\xi))-1| &\leq \sum_{m=n}^{n_k-1}|f_{n_k}(\kappa_m(\xi))-f_{n_k}(\kappa_{m+1}(\xi))| \leq C_1(\lambda/r^\alpha)^{n/2}.
\end{align*}
Letting $k \to \infty$, we have $|f(\kappa_n(\xi))-1|\leq C_2(\lambda/r^\alpha)^{n/2}$, hence (b) follows by letting $n \to \infty$. With a similar argument, we can also conclude (c).

\vspace{0.3cm}

 By the claim and Theorem \ref{th3.6}, let $u = {\rm Tr}f \in \dom_K$. Then $0 \leq u \leq 1$ on $K$, $u(\xi)=\lim_{n \to \infty} f(\kappa_n(\xi))=1$ for all $\xi \in \Phi$, and $u(\eta)=\lim_{n \to \infty} f(\kappa_n(\eta))=0$ for all $\eta \in \Psi$.

 \medskip

 Now we complete the proof of the theorem. By \eqref{eq4.14'}, $\en_K[u_{\Phi,\Psi}] = \en_X[f_{\Phi,\Psi}] \leq R(\Phi,\Psi)^{-1}$. For the reverse inequality, it suffices to show that $R(\Phi,\Psi)^{-1} \leq \en_K[u]$ for all $u \in \dom_K$ with $u=1$ on $\Phi$ and $u=0$ on $\Psi$. Fix a $\kappa$-sequence $\{\kappa_n\}_n$. For any $\varepsilon \in (0,\frac 12)$, by Proposition \ref{th3.2}, there exists a positive integer $n_0 = n_0(\varepsilon)$ such that $|Hu(\kappa_n(\xi))-u(\xi)| \leq \varepsilon$ whenever $n \geq n_0$ and $\xi \in K$. Taking $V_1 = \kappa_n(\Phi) \cup \kappa_n(\Psi)$ and $g = Hu|_{V_1}$ as in Proposition \ref{th4.0}, then we have, for $n \geq n_0$,
\begin{equation} \label{eq4.16'}
\min\{\en_{X_n}[f]: f \in \ell(X_n), f=Hu \hbox{ on } V_1\} = \frac 12 \sum_{\xxx,\yyy \in V_1} c_*(\xxx,\yyy)(Hu(\xxx)-Hu(\yyy))^2.
\end{equation}
Hence
\begin{align*}
\en_{X_n}[Hu] &\geq \min\{\en_{X_n}[f]: f \in \ell(X_n), f=Hu \hbox{ on } V_1\} \\
&\geq \sum_{\xxx \in \kappa_n(\Phi)} \sum_{\yyy \in \kappa_n(\Psi)}c_*(\xxx,\yyy)(Hu(\xxx)-Hu(\yyy))^2 \hspace{8mm} \hbox{(by \eqref{eq4.16'})} \\
&\geq \sum_{\xxx \in \kappa_n(\Phi)} \sum_{\yyy \in \kappa_n(\Psi)}c_*(\xxx,\yyy)\big((1-\varepsilon)-\varepsilon\big)^2 \\
&= (1-2\varepsilon)^2 R_n(\Phi,\Psi)^{-1}.
\end{align*}
As $\varepsilon$ can be arbitrarily small, we have $ R(\Phi,\Psi)^{-1}\leq \lim_{n \to \infty} \en_{X_n}[Hu] = \en_K[u]$. Hence \eqref{eq4.15'} follows.

The uniqueness of $ u_{\Phi,\Psi}$ as an energy minimizer follows from the fact that  ${\mathcal E}_K$ is strictly convex in ${\mathcal D}_K$.
\end{proof}

\bigskip

The function $f_{\Phi, \Psi} \in {\mathcal H}{\mathcal D}_X$ thus constructed is called a {\it harmonic function induced by $\Phi$ and $\Psi$}. The function $u_{\Phi, \Psi} = {\rm Tr}f_{\Phi, \Psi} \in \dom_K$ is referred as the {\it energy minimizer of $\Phi$ and $\Psi$}.

\medskip

\begin{corollary} \label{th4.3'}
With the same assumption as in Theorem \ref{th4.1}, the following conditions are equivalent: for two distinct points $\xi,\eta \in K$,
\begin{enumerate}
\item[(i)] there exists $u \in \mathcal D_K$ with range $[0,1]$ such that $u(\xi)=1$ and $u(\eta)=0$;

\item[(ii)] there exists $u \in \mathcal D_K$ such that $u(\xi) \neq u(\eta)$;

\item[(iii)] $R(\xi, \eta)>0$.
\end{enumerate}
In this case,
$
R(\xi,\eta) = \sup\big\{\dfrac{|u(\xi)-u(\eta)|^2}{\en_K(u,u)}: \ u \in \dom_K, \ \en_K(u,u)>0\big\}.
$
\end{corollary}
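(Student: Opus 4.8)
The plan is to prove the three conditions equivalent via the cycle (iii) $\Rightarrow$ (i) $\Rightarrow$ (ii) $\Rightarrow$ (iii) and to deduce the variational formula, with everything resting on one finite-level energy estimate. The implication (iii) $\Rightarrow$ (i) is immediate from Theorem \ref{th4.2} applied to the closed singletons $\Phi = \{\xi\}$, $\Psi = \{\eta\}$ (with $\Phi \cap \Psi = \emptyset$ since $\xi \neq \eta$): when $R(\xi,\eta) > 0$ it produces $u = u_{\{\xi\},\{\eta\}} \in \dom_K$ with $0 \leq u \leq 1$, $u(\xi) = 1$, $u(\eta) = 0$, and $\en_K[u] = R(\xi,\eta)^{-1}$, which is exactly (i). The implication (i) $\Rightarrow$ (ii) is trivial. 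Thus the substance lies in (ii) $\Rightarrow$ (iii) together with the formula, and I would obtain both from the single inequality described next.

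The key step is to establish that for every $u \in \dom_K$,
\begin{equation} \label{eqproposal-key}
\en_K[u] \ \geq \ R(\xi,\eta)^{-1}\,\big(u(\xi) - u(\eta)\big)^2 ,
\end{equation}
with the convention that the right side is $+\infty$ when $R(\xi,\eta) = 0$ and $u(\xi) \neq u(\eta)$. I would fix a $\kappa$-sequence and work at a finite level $n$ large enough that $\kappa_n(\xi) \neq \kappa_n(\eta)$ with graph distance exceeding $3$ (as in the proof of Theorem \ref{th4.1}). Taking $V_1 = \{\kappa_n(\xi), \kappa_n(\eta)\}$ in Proposition \ref{th4.0}, the harmonic extension of $Hu|_{V_1}$ minimizes energy among functions agreeing with $Hu$ on $V_1$, so
$$
\en_{X_n}[Hu] \ \geq \ c_*(\kappa_n(\xi), \kappa_n(\eta))\,\big(Hu(\kappa_n(\xi)) - Hu(\kappa_n(\eta))\big)^2 .
$$
Applying the same proposition with the boundary data $1$ and $0$ identifies the level-$n$ resistance, namely $R_n(\xi,\eta)^{-1} = c_*(\kappa_n(\xi), \kappa_n(\eta))$. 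I would then let $n \to \infty$: the left side increases monotonically to $\en_X[Hu] = \en_K[u]$, the quantities $Hu(\kappa_n(\cdot))$ converge to the respective boundary values by the Fatou-type Corollary \ref{th3.2}, and $R_n(\xi,\eta) \to R(\xi,\eta)$ by Theorem \ref{th4.1}, which together yield \eqref{eqproposal-key}.

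With \eqref{eqproposal-key} in hand the remainder is bookkeeping. For (ii) $\Rightarrow$ (iii) I argue the contrapositive: if $R(\xi,\eta) = 0$, then the uniformly bounded quantities $\en_{X_n}[Hu] \leq \en_K[u] < \infty$, combined with $R_n(\xi,\eta)^{-1} \to \infty$, force $\big(Hu(\kappa_n(\xi)) - Hu(\kappa_n(\eta))\big)^2 \to 0$, whence $u(\xi) = u(\eta)$ for every $u \in \dom_K$, contradicting (ii). For the formula, \eqref{eqproposal-key} rearranges to $|u(\xi) - u(\eta)|^2 / \en_K(u,u) \leq R(\xi,\eta)$ for every nonconstant $u$ (here $\en_K(u,u) > 0$ exactly when $u$ is nonconstant, since $\dom_K \hookrightarrow C^{(\beta-\alpha)/2}(K)$ by Proposition \ref{th2.4}), giving the ``$\leq$'' bound on the supremum; and when $R(\xi,\eta) > 0$ the minimizer $u_{\{\xi\},\{\eta\}}$ from Theorem \ref{th4.2} attains the ratio $R(\xi,\eta)$ exactly, so the supremum equals $R(\xi,\eta)$. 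The case $R(\xi,\eta) = 0$ is degenerate, all admissible ratios being $0$.

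I expect the main obstacle to be the limit passage in \eqref{eqproposal-key}: one must simultaneously control the finite-level identity $R_n(\xi,\eta)^{-1} = c_*(\kappa_n(\xi),\kappa_n(\eta))$, the monotone convergence of $\en_{X_n}[Hu]$ to $\en_K[u]$, and the degenerate regime $R(\xi,\eta) = 0$ in which $R_n(\xi,\eta)^{-1}$ blows up, so that the product $R_n(\xi,\eta)^{-1}\big(Hu(\kappa_n(\xi)) - Hu(\kappa_n(\eta))\big)^2$ behaves correctly in the limit. This is precisely where the Fatou-type convergence of Corollary \ref{th3.2} and the exponential control of boundary limits from Lemma \ref{th3.3} do the essential work.
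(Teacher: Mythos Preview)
Your argument is correct and follows essentially the same route as the paper: both proofs establish the finite-level inequality $\en_{X_n}[Hu] \geq R_n(\xi,\eta)^{-1}\big(Hu(\kappa_n(\xi))-Hu(\kappa_n(\eta))\big)^2$ and pass to the limit using Corollary~\ref{th3.2} and Theorem~\ref{th4.1}, with (iii) $\Rightarrow$ (i) coming from Theorem~\ref{th4.2}. Your version is in fact slightly cleaner: by invoking Proposition~\ref{th4.0} to identify $R_n(\xi,\eta)^{-1}$ exactly with $c_*(\kappa_n(\xi),\kappa_n(\eta))$ you obtain the sharp inequality \eqref{eqproposal-key} in the limit, which immediately yields the variational formula; the paper instead uses a rough bound that incurs a harmless factor $1/9$ (sufficient for (ii) $\Rightarrow$ (iii)) and tacitly relies on Theorem~\ref{th4.2} for the equality case in the formula.
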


\begin {proof}
Note that (i) $\Rightarrow$ (ii) is trivial, and (iii) $\Rightarrow$ (i) follows from Theorem \ref{th4.2}. We need only prove (ii) $\Rightarrow$ (iii). We observe that  the given $u\in {\mathcal D}_K$ is continuous (Proposition \ref{th2.4}). Fix any $\kappa$-sequence, by Corollary \ref{th3.2}, there exists  $n_0>0$ such that for $n \geq n_0$,
$
|Hu(\kappa_n(\xi))-u(\xi)|\leq \frac13 {|u(\xi)-u(\eta)|}$,  and the same for $\eta$.
Hence $|Hu(\kappa_n(\xi))-Hu(\kappa_n(\eta))|\geq \frac{1}{3}|u(\xi)-u(\eta)|$. Then by \eqref{eq4.1}, for $n> n_0$
\begin {align*}
& \frac{|u(\xi)-u(\eta)|^2}{9R_n(\xi, \eta)}  \leq \frac{|Hu(\kappa_n(\xi))-Hu(\kappa_n(\eta))|^2}{R_n(\xi, \eta)} \\
\leq &\  c(\kappa_n(\xi), \kappa_n(\eta))\ |Hu(\kappa_n(\xi))-Hu(\kappa_n(\eta))|^2 \leq \en_{X_n}[Hu] .
\end{align*}
Taking the limit on $n$, we have $ \frac{|u(\xi)-u(\eta)|^2}{9{R}(\xi,\eta)} \leq \en_K[u] <\infty$. Hence $R(\xi,\eta)>0$.
\end{proof}

\medskip
\begin{corollary} \label{th4.3}
With the same assumption as in Theorem \ref{th4.1},
if $ R^{(\lambda)}(\xi,\eta)=0$ for some $\xi, \eta \in K$, then $\beta_1^* \leq \log \lambda / \log r$ where $\beta_1^*:=  \sup \{\beta > 0: {\mathcal D}^{(\beta)}_K \cap C(K) \hbox{ is dense in } C(K)\}$.
\end{corollary}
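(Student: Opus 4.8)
The plan is to reduce the statement to Corollary \ref{th4.3'}, which already translates the vanishing of the resistance into the collapse of all finite-energy functions at the pair $\xi,\eta$, and then to propagate this collapse to every larger exponent by the monotonicity of the Besov scale. Throughout, write $\beta = \log\lambda/\log r$, and note first that we may assume $\xi \neq \eta$, since otherwise $R^{(\lambda)}(\xi,\eta)=0$ holds automatically by convention and carries no information.

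With $\xi\neq\eta$, my first step is to invoke the equivalence (ii) $\Leftrightarrow$ (iii) of Corollary \ref{th4.3'}: the hypothesis $R^{(\lambda)}(\xi,\eta)=0$ is exactly the statement that no $u \in \mathcal D_K^{(\beta)}$ separates $\xi$ and $\eta$, i.e., every $u \in \mathcal D_K^{(\beta)} = \Lambda_{2,2}^{\alpha,\beta/2}$ satisfies $u(\xi)=u(\eta)$. This is the substantive input, and it is already packaged for us.

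The second step is to upgrade this single-exponent constraint to all exponents $\beta' \geq \beta$. Here the key is the purely intrinsic Besov inclusion $\Lambda_{2,2}^{\alpha,\beta'/2} \subseteq \Lambda_{2,2}^{\alpha,\beta/2}$ for $\beta' > \beta$ recorded in the introduction, which does not depend on any underlying random walk. Hence every $u \in \mathcal D_K^{(\beta')}$ again satisfies $u(\xi)=u(\eta)$. Since $\beta' \geq \beta > \alpha$, Proposition \ref{th2.4} shows all such $u$ are continuous, so $\mathcal D_K^{(\beta')} \cap C(K) = \mathcal D_K^{(\beta')}$ lies entirely in the proper closed subspace $L := \{g \in C(K): g(\xi)=g(\eta)\}$ of $C(K)$. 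Consequently its uniform closure stays inside $L$ and cannot exhaust $C(K)$, so density fails at the exponent $\beta'$.

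Finally I would assemble the conclusion: density of $\mathcal D_K^{(\beta')} \cap C(K)$ in $C(K)$ fails for every $\beta' \geq \beta$, so the set of exponents for which density holds is contained in $(0,\beta)$, whence $\beta_1^* \leq \beta = \log\lambda/\log r$. I do not anticipate a genuine obstacle, as the analytic content is absorbed into Corollary \ref{th4.3'}; the only points requiring care are the passage from the single fixed $\lambda$ of that corollary to the whole family of exponents (cleanly handled by the random-walk-free Besov inclusion rather than by comparing resistances across different $\lambda$) and the harmless reduction to $\xi \neq \eta$.
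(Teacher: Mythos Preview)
Your proof is correct and follows essentially the same approach as the paper: invoke Corollary \ref{th4.3'} to conclude that every $u \in \mathcal D_K^{(\beta)}$ satisfies $u(\xi)=u(\eta)$, so $\mathcal D_K^{(\beta)}$ sits in the proper closed subspace $\{g \in C(K): g(\xi)=g(\eta)\}$ and cannot be dense in $C(K)$. The only difference is that you make explicit the monotonicity step (via the Besov inclusion $\Lambda_{2,2}^{\alpha,\beta'/2}\subset\Lambda_{2,2}^{\alpha,\beta/2}$ for $\beta'>\beta$) needed to pass from failure of density at the single exponent $\beta$ to the bound $\beta_1^*\le\beta$, which the paper leaves implicit.
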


\begin{proof}
 If $ R^{(\lambda)}(\xi,\eta)=0$,  then every $u \in \dom_K$ must satisfy $u(\xi) = u(\eta)$, so  $\dom_K $ is not dense in $C(K)$, which implies $\beta_1^* \leq \log \lambda / \log r$.
\end{proof}

\medskip

\noindent {\bf Remark}. For the implication of (ii) $\Rightarrow$ (iii) in Corollary \ref{th4.3'}, we can omit $\lambda \in (0, r^\alpha) $ (i.e., $\beta > \alpha$), instead consider $u\in {\mathcal D}_K \cap C(K)$, and replace $R(\xi, \eta)$ by $\underline R(\xi, \eta): = \liminf_{n\to \infty}R_n(\xi, \eta)$, then the implication still holds. Consequently, Corollary \ref{th4.3} is still valid.

\medskip

In the following, we will apply Corollary \ref{th4.3'} to give some criteria to determine the critical exponents for  $\beta_2^*:= \sup \{\beta>0: \dim {\mathcal D}^{(\beta)}_K = \infty\}$ and
$\beta_3^*:=  \sup \{\beta > 0: {\mathcal D}^{(\beta)}_K \hbox{ contains nonconstant functions}\}$.

\medskip
Let $\iii_n = ii\cdots i \in \mathcal J_n$ denote the unique word in level $n$  consisting of symbol $i \in \Sigma$, and let $i^\infty=ii\cdots \in \Sigma^\infty$ (identified with the unique point in $\bigcap_{n \geq 0} S_{{\bf i}_n} (K)$).
Then for two distinct symbols $i$, $j \in \Sigma$, we use $R(i^\infty,j^\infty) $ to denote the limiting resistance for the corresponding two points in $K$, and  $R(i^\infty,j^\infty)=\lim_{n \to \infty}R_n(i^\infty,j^\infty)$.

\medskip

\begin{theorem} \label{th4.4}
With the same assumption as in Theorem \ref{th4.1}, $\dom_K$ consists of only constant functions if and only if
\begin{equation} \label{eq4.8}
R^{(\lambda)}(i^\infty, j^\infty)=0, \qquad \forall \, i, j \in \Sigma.
\end{equation}
Consequently, $\beta_3^* = \log \lambda_3^* / \log r$ if
\begin{equation} \label{eq4.9}
\lambda_3^*:= \sup \{\lambda>0: R^{(\lambda)}(i^\infty,j^\infty)=0, \ \forall\, i,j \in \Sigma\} \in (0, r^\alpha),
\end{equation}
and  $\beta_3^*=\infty$ if the above set of $\lambda$ is empty.
\end{theorem}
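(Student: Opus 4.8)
The plan is to establish the stated equivalence first and then read off the value of $\beta_3^*$ by a monotonicity argument. One implication is immediate: if $\dom_K$ contains only constants, then for every pair of distinct symbols $i,j$ each $u\in\dom_K$ satisfies $u(i^\infty)=u(j^\infty)$, so the equivalence (ii)$\Leftrightarrow$(iii) of Corollary \ref{th4.3'} forces $R^{(\lambda)}(i^\infty,j^\infty)=0$ (the case $i=j$ being trivial). The substance is the converse, for which I would argue that the vanishing of $R^{(\lambda)}$ on the finite ``corner set'' $\{i^\infty:i\in\Sigma\}$ spreads to a dense subset of $K$.

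The key localization step is to show that $R^{(\lambda)}(i^\infty,j^\infty)=0$ for all $i,j$ implies $R^{(\lambda)}(S_w(i^\infty),S_w(j^\infty))=0$ for every finite word $w$ and all $i,j$. For this I would use a change of variables in the Gagliardo integral of \eqref{eq1.1} rather than the graph directly. Given $u\in\dom_K=\Lambda^{\alpha,\beta/2}_{2,2}$, the composition $u\circ S_w$ is again in $\dom_K$: writing $\xi'=S_w\xi$, using $|S_w\xi-S_w\eta|=r_w|\xi-\eta|$ together with the scaling $\nu(S_w(\cdot))=r_w^{\alpha}\nu(\cdot)$ of the normalized $\alpha$-Hausdorff measure (Theorem \ref{th2.2}(iii)), one gets
\[
\iint_{K\times K}\frac{|u(S_w\xi)-u(S_w\eta)|^2}{|\xi-\eta|^{\alpha+\beta}}\,d\nu(\xi)\,d\nu(\eta)=r_w^{\beta-\alpha}\iint_{S_w(K)\times S_w(K)}\frac{|u(\xi')-u(\eta')|^2}{|\xi'-\eta'|^{\alpha+\beta}}\,d\nu(\xi')\,d\nu(\eta'),
\]
which is at most $r_w^{\beta-\alpha}$ times the Gagliardo integral of $u$, hence finite since $\beta>\alpha$. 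Thus $u\circ S_w\in\dom_K$, and since $R^{(\lambda)}(i^\infty,j^\infty)=0$ forces every function in $\dom_K$ to agree at $i^\infty$ and $j^\infty$ (Corollary \ref{th4.3'}), applying this to $u\circ S_w$ gives $u(S_w(i^\infty))=u(S_w(j^\infty))$ for all $u\in\dom_K$; by Corollary \ref{th4.3'} again, $R^{(\lambda)}(S_w(i^\infty),S_w(j^\infty))=0$.

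Next I would propagate this across cells. By Corollary \ref{th4.3'} the relation $R^{(\lambda)}(\xi,\eta)=0$ coincides with ``$u(\xi)=u(\eta)$ for all $u\in\dom_K$'', so it is an \emph{equivalence relation} on $K$ (transitivity being the point). The localization step says that inside any cell $S_w(K)$ all corners $\{S_w(i^\infty):i\in\Sigma\}$ lie in one class. Since $S_i(i^\infty)=i^\infty$ gives $S_{wi}(i^\infty)=S_w(i^\infty)$, the corner $S_w(i^\infty)$ of $S_w(K)$ is also a corner of the child cell $S_{wi}(K)$; an induction on $|w|$ therefore merges all of $\{S_w(i^\infty):w\in\Sigma^\ast,\ i\in\Sigma\}$ into a single equivalence class. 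As $S_{i_1\cdots i_n}(j^\infty)\to i_1i_2\cdots$, this set is dense in $K$, so every $u\in\dom_K$ is constant on a dense set; being continuous by Proposition \ref{th2.4} (as $\beta>\alpha$), $u$ is constant, which proves the equivalence.

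Finally, the statement on $\beta_3^*$ follows by monotonicity. The inclusion $\Lambda^{\alpha,\beta'/2}_{2,2}\subset\Lambda^{\alpha,\beta/2}_{2,2}$ for $\beta<\beta'$ shows ``only constants'' passes to all larger $\beta$, equivalently to all smaller $\lambda$ (recall $\beta=\log\lambda/\log r$ decreases in $\lambda$). Hence, by the equivalence just proved, $\{\lambda\in(0,r^\alpha): R^{(\lambda)}(i^\infty,j^\infty)=0\ \forall\,i,j\}$ is a down-set with supremum $\lambda_3^*$: for $\lambda\in(\lambda_3^*,r^\alpha)$ some $R^{(\lambda)}(i^\infty,j^\infty)>0$, so $\dom_K^{(\beta)}$ contains nonconstant functions for the corresponding $\beta\in(\alpha,\log\lambda_3^*/\log r)$, while for $\lambda<\lambda_3^*$ only constants occur. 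Taking the supremum over $\beta$ gives $\beta_3^*=\log\lambda_3^*/\log r$ when $\lambda_3^*\in(0,r^\alpha)$; and if that set of $\lambda$ is empty, nonconstant functions exist for every $\lambda\in(0,r^\alpha)$, whence $\beta_3^*=\infty$. The main obstacle is the localization step, namely turning the hypothesis on the finite corner set into vanishing resistance on a dense set; the change-of-variables device handles it cleanly, but one must verify the measure-scaling $\nu(S_w(\cdot))=r_w^\alpha\nu(\cdot)$ under the OSC (overlaps being $\nu\times\nu$-null), and, should one instead argue directly on the network via cutting and self-similarity, the bookkeeping between word length and the level $|\cdot|$ defining $\mathcal J_n$ when the contraction ratios are unequal.
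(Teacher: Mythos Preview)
Your proof is correct and follows essentially the same approach as the paper's own proof: both use Corollary \ref{th4.3'} to translate $R^{(\lambda)}(i^\infty,j^\infty)=0$ into the statement that every $u\in\dom_K$ agrees at $i^\infty$ and $j^\infty$, the same Gagliardo-integral scaling (your displayed identity is exactly the paper's inequality \eqref{eq4.10}) to pass from $u$ to $u\circ S_w$, and the density of the ``corner set'' $\{S_w(i^\infty):w\in\Sigma^\ast,\,i\in\Sigma\}$ together with Proposition \ref{th2.4} to conclude. The only cosmetic difference is that the paper argues by contrapositive (pick a nonconstant $u$, find $\xxx$ with $u(\xxx i^\infty)\neq u(\xxx j^\infty)$, deduce $u\circ S_\xxx\notin\dom_K$, hence $u\notin\dom_K$), whereas you argue directly; and your equivalence-relation packaging of the density step is a slight rephrasing of the paper's inductive claim that $u(\xxx j^\infty)=u(1^\infty)$ for all $\xxx,j$. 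One harmless slip: the remark ``hence finite since $\beta>\alpha$'' is unnecessary, as the right-hand side is bounded by the full Gagliardo integral of $u$ regardless of the sign of $\beta-\alpha$.
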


\begin{proof}
If for some $i,j \in \Sigma$, $R(i^\infty,j^\infty)>0$, then there exists $u \in \mathcal D_K$ with $u(i^\infty) \neq u(j^\infty)$ by Proposition \ref{th4.2} (or by Corollary \ref{th4.3'} (iii) $\Rightarrow$ (ii)). Thus it suffices to show that \eqref{eq4.8} implies $\dom_K=\{\hbox{constant functions}\}$.

\vspace{1mm}

First we claim that for $u \in C(K)$,  if $u(\xxx i^\infty) = u(\xxx j^\infty)$ for any $\xxx \in \Sigma^\ast$ and $i,j \in \Sigma$, then $u$ is a constant function. Indeed, let $c=u(1^\infty)$, then for $\xxx = \vartheta$, by assumption we have $u(i^\infty)=c$ for any $i \in \Sigma$. Next for  $\xxx=i$, by assumption again,  we have $u(ij^\infty)=u(i^\infty)=c$ hence $u(\xxx j^\infty)=c$ for any $\xxx \in \Sigma^1$ and $j \in \Sigma$. Following the same argument inductively, we have $u(\xxx j^\infty)=c$ for any $\xxx \in \Sigma^\ast$ and $j \in \Sigma$. By continuity, $u \equiv c$, a constant function.

\vspace{1mm}

For nonconstant $u \in C(K)$, by the claim we can pick $\xxx \in \Sigma^\ast$ and $i, j \in \Sigma$ such that $u(\xxx i^\infty) \neq u(\xxx j^\infty)$. We telescope $u$ on the cell $S_\xxx(K)$ to get $\widetilde{u} = u \circ S_\xxx$. Then $\widetilde{u}(i^\infty) \neq \widetilde{u}(j^\infty)$. By Proposition \ref{th4.3} (or by Corollary \ref{th4.3'} (ii) $\Rightarrow$ (iii)) and assumption \eqref{eq4.8}, we must have $\widetilde{u} \notin \dom_K$. Note that
\begin{align} \label{eq4.10}
\en_K[u] &\geq c_1 \ds\int_{S_\xxx(K)} \ds\int_{S_\xxx(K)} \dfrac{|u(\xi)-u(\eta)|^2}{|\xi-\eta|^{\alpha+\beta}} d\nu(\xi)d\nu(\eta) \nonumber \\
&\geq c_2 \ds\int_K \ds\int_K \dfrac{|\widetilde{u}(\xi)-\widetilde{u}(\eta)|^2}{|\xi-\eta|^{\alpha+\beta}} d\nu(\xi)d\nu(\eta) \geq c_3 \en_K[\widetilde{u}],
\end{align}
hence $u \notin \dom_K$. Finally as $\dom_K \cap C(K)=\dom_K$ by Theorem \ref{th2.1}, $\dom_K$ contains constant functions only.
\end{proof}

\bigskip

Next we will show that  $\beta_2^*=\beta_3^*$  under the connectedness of the self-similar set. The following lemma is a key step to include more non-trivial functions in ${\mathcal D}_K$.

\medskip

\begin{lemma} \label{th4.5}
With the same assumption as in Theorem \ref{th4.1}, suppose $\xi \in K$ and  $\Psi $ is a closed subset in $K$ satisfying $R(\xi,\Psi)>0$. Let $u=u_{\xi,\Psi} \in \mathcal D_K$ be the limiting harmonic function. Then for $\eta \in K$ such that $0<u(\eta)<1$, we have $R(\eta, \Psi)>0$ and $R(\xi,\Psi \cup \{\eta\})>0$.
\end{lemma}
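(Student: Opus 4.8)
The plan is to build, out of the given equilibrium potential $u=u_{\xi,\Psi}$, two explicit separating functions in $\dom_K$ and then read off positivity of the two resistances from a converse of Theorem \ref{th4.2}. Write $a=u(\eta)$, so that $0<a<1$ by hypothesis and $0\leq u\leq1$ with $u(\xi)=1$, $u|_\Psi=0$. The engine will be the following reusable fact, which is exactly the reverse-inequality half of the proof of Theorem \ref{th4.2} and, crucially, does \emph{not} presuppose $R(\Phi,\Psi)>0$: if $\Phi,\Psi\subset K$ are disjoint nonempty closed sets and there is some $v\in\dom_K$ with $v|_\Phi=1$ and $v|_\Psi=0$, then $R(\Phi,\Psi)>0$. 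To justify it I would argue as follows. Since $\beta>\alpha$, $v$ is continuous (Proposition \ref{th2.4}), so for $\varepsilon\in(0,\tfrac12)$ Corollary \ref{th3.2} yields $n_0$ with $Hv(\kappa_n(\zeta))\geq 1-\varepsilon$ for $\zeta\in\Phi$ and $Hv(\kappa_n(\zeta))\leq\varepsilon$ for $\zeta\in\Psi$, whenever $n\geq n_0$. Truncating and rescaling $Hv$ on $X_n$ to $g=((Hv\wedge(1-\varepsilon))\vee\varepsilon-\varepsilon)/(1-2\varepsilon)$ gives a function that is $1$ on $\kappa_n(\Phi)$ and $0$ on $\kappa_n(\Psi)$, hence an admissible competitor for $R_n(\Phi,\Psi)^{-1}$. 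As clamping is an energy contraction and $\en_{X_n}[Hv]\leq\en_X[Hv]=\en_K[v]$, one gets $R_n(\Phi,\Psi)^{-1}\leq\en_{X_n}[g]\leq(1-2\varepsilon)^{-2}\en_K[v]$; letting $n\to\infty$ (Theorem \ref{th4.1}) and then $\varepsilon\to0$ yields $R(\Phi,\Psi)^{-1}\leq\en_K[v]<\infty$, i.e. $R(\Phi,\Psi)>0$.

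For the first assertion I would take $w_1=\tfrac1a(u\wedge a)$. Because $\en_K$ is given by the nonnegative symmetric kernel $\Theta$ in \eqref{eq2.3} (Theorem \ref{th2.3}), any $1$-Lipschitz postcomposition contracts $\en_K$, so $\en_K[w_1]=a^{-2}\en_K[u\wedge a]\leq a^{-2}\en_K[u]<\infty$; together with $0\leq w_1\leq1$ this gives $w_1\in\dom_K$. By construction $w_1(\eta)=1$ and $w_1|_\Psi=0$, and $u(\eta)=a>0=u|_\Psi$ shows $\eta\notin\Psi$, so $\{\eta\}$ and $\Psi$ are disjoint nonempty closed sets. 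The reusable fact then gives $R(\eta,\Psi)>0$.

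For the second assertion I would take $w_2=\tfrac1{1-a}(u-a)^+$. The same contraction argument gives $w_2\in\dom_K$ with $\en_K[w_2]\leq(1-a)^{-2}\en_K[u]<\infty$. Here $w_2(\xi)=1$ (as $u(\xi)=1$), while $w_2|_\Psi=0$ (as $u|_\Psi=0\leq a$) and $w_2(\eta)=0$ (as $u(\eta)=a$); thus $w_2$ vanishes on $\Psi\cup\{\eta\}$, which is closed and, since $u(\xi)=1$ differs from both $0$ and $a$, disjoint from $\xi$. Applying the reusable fact to $\Phi=\{\xi\}$ and $\Psi\cup\{\eta\}$ gives $R(\xi,\Psi\cup\{\eta\})>0$.

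The conceptual step is recognizing that the two level-type truncations of the potential $u$ are themselves admissible separating functions, which is what lets a single potential generate the two new positive resistances; once that is in hand the rest is routine. The step demanding the most care is the reusable fact: I must check that the reverse inequality $R(\Phi,\Psi)^{-1}\leq\en_K[v]$ from Theorem \ref{th4.2} genuinely holds with no prior knowledge that $R(\Phi,\Psi)>0$ (otherwise the deduction is circular), and that the truncation and affine rescaling used both there and in forming $w_1,w_2$ do not increase the energy. Both points follow from the explicit nonnegative-kernel form of $\en_K$ in \eqref{eq2.3} together with the uniform boundary convergence in Corollary \ref{th3.2}.
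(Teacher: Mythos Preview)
Your proof is correct and takes a genuinely different route from the paper's.

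The paper argues at the discrete level throughout. It first shows, via the geodesic-ray estimates of Lemma \ref{th3.3}(i) and pointwise convergence of the $f_n$, that $f_n(\kappa_n(\eta))\in(\varepsilon/4,1-\varepsilon/4)$ for all large $n$. For $R(\eta,\Psi)>0$ it then reads off $R_n(\eta,\Psi)\geq f_n(\kappa_n(\eta))^2/\en_{X_n}[f_n]$ directly from the variational definition. For $R(\xi,\Psi\cup\{\eta\})>0$ it introduces the second discrete minimizer $g_n$ (for $\{\eta\}$ versus $\Psi$) and forms the explicit \emph{linear} combination $h_n=(1-\gamma_{1,n}\gamma_{2,n})^{-1}(f_n-\gamma_{1,n}g_n)$, checks that $h_n$ hits the right boundary values on $\kappa_n(\{\xi\}\cup\Psi\cup\{\eta\})$, and bounds $\en_{X_n}[h_n]$.

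You instead work entirely on $K$: you isolate the reverse inequality in Theorem \ref{th4.2} as a free-standing criterion (any separating $v\in\dom_K$ forces $R(\Phi,\Psi)>0$, with no circularity), and then manufacture the two separating functions by \emph{nonlinear} truncation of the single potential $u$, namely $a^{-1}(u\wedge a)$ and $(1-a)^{-1}(u-a)^+$, invoking the Markovian contraction property of the kernel form \eqref{eq2.3}. This is shorter and more conceptual; it leans on the Dirichlet-form structure rather than on discrete approximation. The paper's argument, by contrast, stays inside the $X_n$ machinery and produces explicit quantitative lower bounds for the two resistances in terms of $\varepsilon$, $R(\xi,\Psi)$ and $R(\eta,\Psi)$, which your approach does not immediately give (though such bounds are not used downstream in Theorem \ref{th4.6}).
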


\begin{proof}
Let $f=f_{\xi,\Psi} = Hu$ and $\varepsilon = \min \{u(\eta), 1-u(\eta)\}>0$. Fix a $\kappa$-sequence $\{\kappa_n\}_n$. By Proposition \ref{th3.2}, there exists a positive integer $m_0$ such that
\begin{equation} \label{eq4.11}
|f(\kappa_n(\eta))-u(\eta)|<\varepsilon/4,\qquad \forall\, n \geq m_0.
\end{equation}
Following the same argument as in the proof of Proposition \ref{th4.2}, let $f_n \in \ell(X_n)$ be the energy minimizer in \eqref{eq4.7} with $\Phi = \{\xi\}$. By passing to subsequence, we assume, without loss of generality,  that $f_{n} \in \ell(X)$ converges to $f$ pointwise.

\vspace{2mm}

\noindent Note that for $n \geq 1$ and $ k < n$, by Lemma \ref{th3.3}(i),
\begin{align*}
|f_{n}(\kappa_k(\eta))-f_{n}(\kappa_{n}(\eta))| \leq \sum_{m=k}^{n-1}|f_{n}(\kappa_m(\eta))-f_{n}(\kappa_{m+1}(\eta))| \leq C_1(\lambda/r^\alpha)^{k/2}.
\end{align*}
Thus we can pick a positive integer $m_1 \geq m_0$ such that
\begin{equation} \label{eq4.12}
|f_{n}(\kappa_{m_1}(\eta))-f_{n}(\kappa_{n}(\eta))| < \varepsilon/4, \qquad \forall\, n\geq m_1.
\end{equation}
Since $f_{n}(\kappa_{m_1}(\eta)) \to f(\kappa_{m_1}(\eta))$ as $n \to \infty$, there exists a positive integer $n_0$ such that $n_0 \geq m_1$ and
\begin{equation} \label{eq4.13}
|f_{n}(\kappa_{m_1}(\eta)) - f(\kappa_{m_1}(\eta))| < \varepsilon/4, \qquad \forall \, n\geq n_0.
\end{equation}
Combining \eqref{eq4.11}--\eqref{eq4.13}, we have $f_{n}(\kappa_{n}(\eta)) \in (\varepsilon/4, 1-\varepsilon/4)$ for all $n \geq n_0$.
 Using \eqref{eq4.1} and \eqref{eq4.3}, for $n \geq n_0$, we have
\begin{equation}
R_{n}(\eta,\Psi) \geq \frac{f_{n}(\kappa_{n}(\eta))^2}{\mathcal E_{X_{n}}[f_{n}]} > \frac{\varepsilon^2}{16}R_{n}(\xi,\Psi).
\end{equation}
Hence $R(\eta,\Psi) > 0$ by passing limit.

\vspace{2mm}

To prove $R(\xi,\Psi \cup \{\eta\})>0$, let $g_n \in \ell(X_n)$ be the energy minimizer in \eqref{eq4.7} with $\Phi = \{\eta\}$. By passing to subsequence if necessary, we let $\gamma_{1,n} = f_{n}(\kappa_{n}(\eta))$ and $\gamma_{2,n} = g_{n}(\kappa_{n}(\xi))$. Then $\gamma_{1,n}\gamma_{2,n} \in [0,1-\varepsilon/4)$ as $\gamma_{1,n} \in (\varepsilon/4,1-\varepsilon/4)$ (by last part) and $\gamma_{2,n} \in [0,1]$. For $n \geq 1$, we can check that the function
$$
h_n := \frac{1}{1-\gamma_{1,n}\gamma_{2,n}} f_{n} - \frac{\gamma_{1,n}}{1-\gamma_{1,n}\gamma_{2,n}} g_{n} \in \ell(X_{n})
$$
satisfies $h_n(\kappa_{n}(\xi)) = 1$, and $h_n = 0$ on $\kappa_{n}(\Psi \cup \{\eta\})$. Moreover, $h_n$ is harmonic on $X_{n} \setminus \kappa_{n}(\Psi \cup \{\xi,\eta\})$, thus $\mathcal E_{X_{n}}[h_n] = (R_{n}(\xi,\Psi \cup \{\eta\}))^{-1}$ by \eqref{eq4.3}. Hence
\begin{align*}
R(\xi, \Psi \cup \{\eta\}) &= \lim_{n \to \infty} (\mathcal E_{X_{n}}[h_n])^{-1} \geq \lim_{n \to \infty} \left(\frac{2\mathcal E_{X_{n}}[f_{n}]}{(1-\gamma_{1,n}\gamma_{2,n})^2}+\frac{2\gamma_{1,n}^2\mathcal E_{X_{n}}[g_{n}]}{(1-\gamma_{1,n}\gamma_{2,n})^2}\right)^{-1} \\
&\geq \lim_{n \to \infty} \left(\frac{2}{(\varepsilon/4)^2 R_{n}(\xi,\Psi)} + \frac{2(1-\varepsilon/4)^2}{(\varepsilon/4)^2 R_{n}(\eta,\Psi)}\right)^{-1} \\
&= \left(\frac{2}{(\varepsilon/4)^2 R(\xi,\Psi)} + \frac{2(1-\varepsilon/4)^2}{(\varepsilon/4)^2 R(\eta,\Psi)}\right)^{-1} > 0 .
\end{align*}
\end{proof}

\medskip

\begin{theorem} \label{th4.6}
With the assumptions in Theorem \ref{th4.1}, assume further $K$ is connected, and there exists $\beta>\alpha$ such that ${\mathcal D}_K (= {\mathcal D}^{(\beta)}_K$) is non-trivial. Then $\beta_2^*=\beta_3^*$.
\end{theorem}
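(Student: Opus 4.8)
The plan is to split the equality $\beta_2^* = \beta_3^*$ into its two inequalities. The inequality $\beta_2^* \le \beta_3^*$ is immediate, since a domain of infinite dimension automatically contains nonconstant functions. For the reverse, note first that the hypothesis furnishes some $\beta > \alpha$ with $\dom_K^{(\beta)}$ nontrivial, so $\beta_3^* > \alpha$; and because $\Lambda_{2,2}^{\alpha,\beta'/2} \subset \Lambda_{2,2}^{\alpha,\beta/2}$ whenever $\beta < \beta'$, the set $\{\beta : \dom_K^{(\beta)} \text{ is nontrivial}\}$ is downward closed. Hence it suffices to fix an arbitrary $\beta$ with $\alpha < \beta < \beta_3^*$, run the resistance machinery for the associated $\lambda = r^\beta \in (0, r^\alpha)$, and prove $\dim \dom_K^{(\beta)} = \infty$; letting $\beta \uparrow \beta_3^*$ then yields $\beta_2^* \ge \beta_3^*$.

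To exhibit infinitely many independent functions I would iterate Lemma \ref{th4.5}, using connectedness as the mechanism that keeps the iteration alive. Since $\dom_K$ is nontrivial, Corollary \ref{th4.3'} produces distinct $\xi_0, \eta_0 \in K$ with $R(\xi_0, \{\eta_0\}) > 0$; put $\Psi_0 = \{\eta_0\}$. Inductively, suppose I have a finite set $\Psi_n = \{\eta_0, \dots, \eta_n\}$ with $R(\xi_0, \Psi_n) > 0$, and let $u_n = u_{\xi_0, \Psi_n}$ be the energy minimizer of Theorem \ref{th4.2}, so that $u_n(\xi_0) = 1$, $u_n \equiv 0$ on $\Psi_n$, and $0 \le u_n \le 1$. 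By Proposition \ref{th2.4} the function $u_n$ is continuous, and since $K$ is connected $u_n(K)$ is an interval containing $0$ and $1$; thus I may pick $\eta_{n+1}$ with $0 < u_n(\eta_{n+1}) < 1$, a point necessarily distinct from every element of $\Psi_n \cup \{\xi_0\}$. Applying Lemma \ref{th4.5} with $\xi = \xi_0$, $\Psi = \Psi_n$, $\eta = \eta_{n+1}$ gives simultaneously $R(\eta_{n+1}, \Psi_n) > 0$ and $R(\xi_0, \Psi_{n+1}) > 0$, where $\Psi_{n+1} = \Psi_n \cup \{\eta_{n+1}\}$. The second inequality feeds the next stage, so the construction never halts and yields infinitely many distinct points $\eta_0, \eta_1, \eta_2, \dots$.

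The first inequality, $R(\eta_{n+1}, \Psi_n) > 0$, supplies the functions: Theorem \ref{th4.2} gives $w_{n+1} := u_{\eta_{n+1}, \Psi_n} \in \dom_K$ with $w_{n+1}(\eta_{n+1}) = 1$ and $w_{n+1}(\eta_k) = 0$ for all $0 \le k \le n$. The family $\{w_m\}_{m \ge 1}$ is then triangular in the sense that $w_m(\eta_m) = 1$ while $w_m(\eta_k) = 0$ for $k < m$. Linear independence follows by evaluating a finite relation $\sum_{m=1}^N c_m w_m = 0$ successively at $\eta_1, \eta_2, \dots$: at $\eta_j$ every term with $m > j$ vanishes (since $\eta_j \in \Psi_{m-1}$), the terms with $m < j$ drop out by the already-established $c_1 = \dots = c_{j-1} = 0$, and $w_j(\eta_j) = 1$ forces $c_j = 0$. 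Hence $\dim \dom_K = \infty$, completing the argument.

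I would regard Lemma \ref{th4.5} as doing the real work; the crux of assembling it into a proof is ensuring that each stage genuinely produces a new separating direction. This is precisely where connectedness enters---it guarantees an intermediate value $\eta_{n+1}$ at which $u_n$ is strictly between its two extremes---and where the two-fold conclusion of Lemma \ref{th4.5} is essential, one half sustaining the induction and the other half delivering the independent function. The remaining points (continuity from Proposition \ref{th2.4}, distinctness of the $\eta_n$, and the triangular independence) are routine once this scheme is in place.
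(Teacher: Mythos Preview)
Your proposal is correct and follows essentially the same approach as the paper: reduce to showing that nontriviality of $\dom_K$ forces infinite dimension, then iterate Lemma \ref{th4.5} starting from a pair with positive resistance, using connectedness and continuity to find intermediate points $\eta_{n+1}$ at each stage, and conclude via a triangular family of functions. The only cosmetic difference is that the paper uses the minimizers $u_n = u_{\xi_0,\Psi_{n-1}}$ themselves (with $u_n(\eta_n)=1/2$) as the independent family, whereas you invoke the other half of Lemma \ref{th4.5} to build auxiliary functions $w_m = u_{\eta_m,\Psi_{m-1}}$; both yield a triangular evaluation matrix and the argument is the same.
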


\begin{proof}
It suffices to verify that for $\lambda \in (0,r^\alpha)$, $\dim \dom_K > 1$ (i.e., $\dom_K$ contains nonconstant functions) implies that $\dim \dom_K = \infty$. We have $R(\xi, \eta)>0$ for some $\xi,\eta \in K$ by Corollary \ref{th4.3'} (ii) $\Rightarrow$ (iii). The energy minimizer $u_1=u_{\xi,\eta} \in \dom_K$ is continuous with $u_1(\xi)=1$ and $u_1(\eta)=0$, hence there exists $\eta_1 \in K$ such that $u_1(\eta_1)=1/2$. By Lemma \ref{th4.5}, we have $R(\xi,\{\eta,\eta_1\})>0$ and this induces another energy minimizer $u_2=u_{\xi, \{\eta, \eta_1\}} \in \dom_K$ with $u_2(\xi)=1$ and $u_2(\eta)=u_2(\eta_1)=0$. By the continuity, we can pick $\eta_2 \in K$ such that $u_2(\eta_2) = 1/2$. Setting $\eta_0=\eta$ and repeating the above argument, we get a sequence of energy minimizers $\{u_n\}_{n=1}^\infty$ together with a sequence of points $\{\eta_k\}_{k=0}^\infty$ in $K$ such that $u_n(\xi)=1$, $u_n(\eta_n)=1/2$, and $u_n(\eta_k)=0$ for any $0 \leq k < n$. Thus $[u_i(\eta_j)]_{i,j \geq 1}$ is an infinite upper triangular matrix with constant diagonal entries $1/2$. Hence $\{u_n\}_{n=1}^\infty$ is a sequence of linearly independent functions in $\dom_K$, so that $\dim \mathcal D_K = \infty$.
\end{proof}

\medskip

\noindent {\bf Remark.} The connectivity of $K$ is necessary in Theorem \ref{th4.6}. For example, if we let  $\{S_i\}_{i=1}^4$ be an IFS on $\mathbb R$ as follows:
$$
S_1(x)=\frac x4, \quad S_2(x)=\frac x4 + \frac 1{12}, \quad S_3(x)=\frac x4 + \frac 23, \quad S_4(x)=\frac x4 + \frac 34.
$$
Then $K = [0,1/3] \cup [2/3,1]$, and it is easy to check that the IFS satisfies the OSC (let $O = (0,1/3) \cup (2/3,1)$ as the open set). As $K$ consists of two intervals as connected components, we have $\beta_2^*=2$ and $\beta_3^* = \infty$ trivially.

\bigskip

In the rest of this section, we focus on the {\it post critically finite} (p.c.f.) self-similar sets \cite {Ki1}, and provide a criterion to determine $\beta_1^*$. We will need a general lemma as follow.
\medskip

\begin{lemma} \label{th4.8}
With the same assumption as in Theorem \ref{th4.1}, for a finite set $E \subset K$ with $\# E \geq 2$, if $R(\xi,\eta)>0$ for all distinct $\xi \not= \eta$ in $E$, then $R(\xi,E \setminus \{\xi\})>0$ for all $\xi \in E$.
\end{lemma}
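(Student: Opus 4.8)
The plan is to reduce the statement about the point--to--set resistance $R(\xi,E\setminus\{\xi\})$ to a multiplicative closure property of $\dom_K$ together with the variational characterization of the limiting resistance. First I would record the general fact, implicit in Theorem~\ref{th4.2}, that for disjoint closed sets $\Phi,\Psi\subset K$ one has $R(\Phi,\Psi)^{-1}=\inf\{\en_K[u]:u\in\dom_K,\ u=1\text{ on }\Phi,\ u=0\text{ on }\Psi\}$. The point is that the computation of the reverse inequality $R(\Phi,\Psi)^{-1}\le\en_K[u]$ in the proof of Theorem~\ref{th4.2} is valid for \emph{every} admissible $u$ and never uses $R(\Phi,\Psi)>0$. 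Consequently, to prove $R(\xi,E\setminus\{\xi\})>0$ it suffices to exhibit a \emph{single} function $v\in\dom_K$ with $v(\xi)=1$ and $v\equiv 0$ on $E\setminus\{\xi\}$.

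Next I would build $v$ as a product. For each $\eta\in E\setminus\{\xi\}$ we have $R(\xi,\eta)>0$ by hypothesis, so Corollary~\ref{th4.3'} ((iii)$\Rightarrow$(i)) supplies $u_\eta\in\dom_K$ with range in $[0,1]$, $u_\eta(\xi)=1$ and $u_\eta(\eta)=0$. Set $v=\prod_{\eta\in E\setminus\{\xi\}}u_\eta$. Evaluating, $v(\xi)=1$, and for each fixed $\eta_0\in E\setminus\{\xi\}$ the factor $u_{\eta_0}$ vanishes at $\eta_0$, so $v(\eta_0)=0$; thus $v$ has exactly the required boundary values, and it is bounded with $0\le v\le 1$.

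The one point that needs care --- and the step I expect to be the crux --- is that $v\in\dom_K$, i.e.\ that $\dom_K$ is closed under finite products. Here I would use that $\beta>\alpha$, so by Proposition~\ref{th2.4} every function in $\dom_K=\Lambda^{\alpha,\beta/2}_{2,2}$ is bounded and continuous. For bounded $u,w\in\dom_K$ the pointwise bound
\begin{equation*}
|u(\xi)w(\xi)-u(\eta)w(\eta)|^2\le 2\|u\|_\infty^2|w(\xi)-w(\eta)|^2+2\|w\|_\infty^2|u(\xi)-u(\eta)|^2,
\end{equation*}
integrated against $|\xi-\eta|^{-(\alpha+\beta)}\,d\nu(\xi)\,d\nu(\eta)$, shows that the Gagliardo integral $\en^{(\beta)}[uw]$ is finite; since $uw$ is bounded on the finite measure space $(K,\nu)$ it also lies in $L^2(K,\nu)$, so $uw\in\dom_K$ by Theorem~\ref{th2.3}. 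Iterating over the finitely many factors gives $v\in\dom_K$.

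Finally I would conclude: $v$ is admissible for the pair $(\{\xi\},E\setminus\{\xi\})$, so by the variational identity of the first paragraph $R(\xi,E\setminus\{\xi\})^{-1}\le\en_K[v]<\infty$, whence $R(\xi,E\setminus\{\xi\})>0$; since $\xi\in E$ was arbitrary, this proves the lemma. I note that an alternative inductive route through Lemma~\ref{th4.5} runs into the case $u_{\xi,\Psi}(\eta)=1$, which that lemma does not cover and which again forces one to multiply by a separating function, so the product argument handles all cases uniformly and is cleaner.
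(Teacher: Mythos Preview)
Your argument is correct and takes a genuinely different route from the paper. The paper proceeds by induction on $\#E$: at the step $\#E=m+1$ it picks three points $\xi_1,\xi_2,\xi_3\in E$, takes the level-$n$ energy minimizers $f_{1,n},f_{2,n},f_{3,n}$ for the three pairs $(\{\xi_i\},E\setminus\{\xi_i,\xi_{i+1}\})$ (whose limiting resistances are positive by the induction hypothesis), forms an explicit linear combination $h_n$ that has the correct boundary values on $\kappa_n(E)$, and bounds $\en_{X_n}[h_n]$ from above in terms of the three known resistances. Your approach instead works directly on $K$: you observe that the inequality $R(\Phi,\Psi)^{-1}\le \en_K[u]$ from the proof of Theorem~\ref{th4.2} holds for any admissible $u\in\dom_K$ regardless of the sign of $R(\Phi,\Psi)$, and then you manufacture an admissible $u$ as a finite product of the pairwise separating functions supplied by Corollary~\ref{th4.3'}, using the algebra property of $\dom_K=\Lambda_{2,2}^{\alpha,\beta/2}$ for $\beta>\alpha$ (bounded functions, elementary product estimate on the Gagliardo integrand). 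Your proof is shorter and more functional-analytic; the paper's proof stays closer to the discrete network picture and yields, as a by-product, an explicit quantitative lower bound for $R(\xi_1,E\setminus\{\xi_1\})$ in terms of the resistances coming from the induction hypothesis, which your product construction does not directly provide.
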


\begin{proof}  We prove the lemma by induction on $\# E$.
It is trivial if $\#E =2$. Suppose the lemma holds for $\# E = m$ (m $\geq$ 2). Now let $\# E = m+1$. We choose arbitrarily three distinct points $\xi_1, \xi_2, \xi_3 \in E$. Then it suffices to show that $R(\xi_1,E \setminus \{\xi_1\})>0$. By induction hypothesis, we have three positive limiting resistances $R_1:=R(\xi_1, E \setminus \{\xi_1,\xi_2\})$, $R_2:=R(\xi_2, E \setminus \{\xi_2,\xi_3\})$ and $R_3:=R(\xi_3, E \setminus \{\xi_3,\xi_1\})$.

\vspace{1mm}

 For sufficiently large $n$, let $f_{1,n}$, $f_{2,n}$, $f_{3,n} \in \ell(X_n)$ be the energy minimizer in \eqref{eq4.7} with $(\Phi,\Psi)=(\{\xi_1\}, E \setminus \{\xi_1,\xi_2\})$, $(\{\xi_2\}, E \setminus \{\xi_2,\xi_3\})$, $(\{\xi_3\}, E \setminus \{\xi_3,\xi_1\})$ respectively. Fix a $\kappa$-sequence $\{\kappa_n\}_n$. Let $\gamma_{1,n} = f_{1,n}(\kappa_n(\xi_2))$, $\gamma_{2,n}=f_{2,n}(\kappa_n(\xi_3))$, and $\gamma_{3,n} = f_{3,n}(\kappa(\xi_1))$. Then $\gamma_{i,n} \in [0,1]$ for $i=1,2,3$. For sufficiently large $n$, we can check that the function
\begin{equation} \label{eq4.15}
h_n:=\frac{1}{1+\gamma_{1,n}\gamma_{2,n}\gamma_{3,n}}\big (f_{1,n}-
(\gamma_{1,n}f_{2,n}+\gamma_{1,n}\gamma_{2,n}f_{3,n})\big )
\end{equation}
satisfies $h_n(\kappa_n(\xi_1))=1$, and $h_n = 0$ on $\kappa_n(E \setminus \xi_1)$. Moreover, $h_n$ is harmonic on $X_n \setminus \kappa_n(E)$, thus $\en_{X_n}[h_n]=(R_n(\xi_1, E \setminus \{\xi_1\}))^{-1}$ by \eqref{eq4.3}. Hence
\begin{align*}
R(\xi_1, E \setminus \{\xi_1\}) &= \lim_{n \to \infty} (\en_{X_n}[h_n])^{-1} \\
& \geq \lim_{n \to \infty}\left(\frac{3(\en_{X_n}[f_{1,n}]+\gamma_{1,n}^2 \en_{X_n}[f_{2,n}]+\gamma_{1,n}^2 \gamma_{2,n}^2 \en_{X_n}[f_{3,n}])}{(1+\gamma_{1,n}\gamma_{2,n}\gamma_{3,n})^2}\right)^{-1} \\
& = \left(\frac{3(R_1^{-1}+\gamma_{1,n}^2 R_2^{-1}+\gamma_{1,n}^2 \gamma_{2,n}^2 R_3^{-1})}{(1+\gamma_{1,n}\gamma_{2,n}\gamma_{3,n})^2}\right)^{-1} > 0.
\end{align*}
This completes the proof of the induction.
\end{proof}

\medskip
Following Kigami \cite {Ki1}, for an IFS $\{S_j\}_{j=1}^N$ with a self-similar set $K$,   we let $C_K = \bigcup_{i,j \in \Sigma, i \neq j}(S_i(K) \cap S_j(K))$, and define a {\it critical set} by $\mathcal C = \pi^{-1}(C_K)$, a {\it post critical set} by $\mathcal P = \bigcup_{n \geq 1} \sigma^n(\mathcal C)$. We call $K$  {\it post critically finite }(p.c.f.) if ${\mathcal P}$ is a finite set.

\medskip

It is known that for the similitudes  $S_j = r_j (R_jx +b_j), j=1, \cdots , N$, if  the $\{R_j\}_{j=1}^N$  are commensurable, then the p.c.f.~property implies the OSC \cite {DL}, and the statement is not true without the commensurable assumption \cite {TKV}. We introduce two geometric conditions on the p.c.f.~sets:

\vspace {0.2cm}

\noindent {\rm (C)} \ {\it for any family of distinct subcells $S_{i_1}(K), \cdots, S_{i_k}(K)$ that intersects at a point $p$, there exists $0<\delta <1$ and closed cones $\mathcal C_j, 1\leq j \leq k$ with vertex at $p$ such that
$$
 S_{i_j}( K) \cap B(p, \delta) \subset \mathcal C_j, \ \ \hbox {and} \ \  \mathcal C_j \cap \mathcal C_\ell = \{p\} \quad \forall \ 1\leq j,\ \ell \leq k, \ j \not = \ell;
$$}

\vspace {0.1cm}
\noindent {\rm (H)}\ {\it there exists constant $\gamma>0$ such that for any $\xxx, \yyy \in X$ with $|\xxx|=|\yyy|$, if $S_{\xxx}(K) \cap S_{\yyy}(K)=\emptyset$, then}
$$
{\rm dist}(S_\xxx(K),S_\yyy(K))>\gamma \cdot r^{|\xxx|}.
$$
Condition (C) says the intersecting cells are separated by closed cones (except at the vertices), and the geometric meaning is clear. Condition (H) says that if two cells are disjoint, then they are ``strongly" separate; it has been used in \cite {Jo}, \cite {LW1} and \cite{GuL}. Note that the familiar self-similar sets satisfies this condition, and it is proved in \cite{GuL} the if the IFS is of the form  $S_j(x)= r(x+b_j)$ and is p.c.f., then $K$ satisfies condition (H).

\begin{lemma} \label{th4.10}
Let $K$ be a p.c.f.~self-similar set that satisfies either (C) or (H). Suppose for  $\alpha <\beta <\beta'$,  $u$ satisfies $u \circ S_i \in \Lambda_{2,2}^{\alpha,\beta'/2}$ for each $i \in \Sigma$, then $u \in \Lambda_{2,2}^{\alpha,\beta/2}$.
\end{lemma}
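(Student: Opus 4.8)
The plan is to show directly that the Gagliardo integral $\iint_{K\times K}|u(\xi)-u(\eta)|^2|\xi-\eta|^{-(\alpha+\beta)}\,d\nu(\xi)\,d\nu(\eta)$ is finite; since $u$ will be seen to be bounded and $\nu$ is finite, the membership $u\in L^2(K,\nu)$ is then automatic and $u\in\Lambda_{2,2}^{\alpha,\beta/2}$ follows. The starting point is that each $u\circ S_i\in\Lambda_{2,2}^{\alpha,\beta'/2}$ with $\beta'>\alpha$, so Proposition~\ref{th2.4} gives $u\circ S_i\in C^{\gamma'}(K)$ with $\gamma'=(\beta'-\alpha)/2$; consequently $u$ is bounded and continuous on $K=\bigcup_i S_i(K)$, and $|u(\xi)-u(\eta)|\le C|\xi-\eta|^{\gamma'}$ whenever $\xi,\eta$ lie in a common first-level cell $S_i(K)$. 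I would then split $K\times K$ into the diagonal blocks $S_i(K)\times S_i(K)$ and the off-diagonal blocks $S_i(K)\times S_j(K)$ ($i\neq j$) and estimate each separately.

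On a diagonal block the substitution $\xi=S_i\xi'$, $\eta=S_i\eta'$, using $|S_i\xi'-S_i\eta'|=r_i|\xi'-\eta'|$ and $\nu(S_iA)=r_i^\alpha\nu(A)$, rewrites the block as $r_i^{\alpha-\beta}$ times the Gagliardo integral of $u\circ S_i$ at exponent $\beta$; this is finite because $\beta<\beta'$ yields the inclusion $\Lambda_{2,2}^{\alpha,\beta'/2}\subset\Lambda_{2,2}^{\alpha,\beta/2}$ recorded in Section~\ref{sec:1}. For an off-diagonal block with $S_i(K)\cap S_j(K)=\emptyset$, compactness of the two disjoint cells gives $|\xi-\eta|\ge\mathrm{dist}(S_i(K),S_j(K))>0$, so the boundedness of $u$ makes that block finite at once. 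Only the off-diagonal blocks over \emph{touching} cells remain, and it is here that the p.c.f.\ property and conditions (C)/(H) are used.

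The main work, and the expected obstacle, is the touching case $S_i(K)\cap S_j(K)\neq\emptyset$. Since $K$ is p.c.f., this intersection is a finite set of points, and I would localize the integral near each such $p$, using the H\"older bounds $|u(\xi)-u(p)|\le C|\xi-p|^{\gamma'}$ on $S_i(K)$ and $|u(\eta)-u(p)|\le C|\eta-p|^{\gamma'}$ on $S_j(K)$, so that $|u(\xi)-u(\eta)|^2\lesssim|\xi-p|^{\beta'-\alpha}+|\eta-p|^{\beta'-\alpha}$. Under (C) the separating cones force $|\xi-\eta|\asymp|\xi-p|+|\eta-p|$ near $p$; grouping $\xi,\eta$ into the dyadic annuli $\{\,|\cdot-p|\asymp 2^{-k}\,\}$, each of $\nu$-mass $\lesssim 2^{-k\alpha}$, bounds the block by $\sum_{k,l}2^{-k\beta'}2^{-l\alpha}(2^{-k}+2^{-l})^{-(\alpha+\beta)}$, which upon splitting into $k\le l$ and $k>l$ is a geometric series summable exactly when $\beta<\beta'$. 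Under (H) I would instead classify a pair $(\xi,\eta)$ by the level $m$ at which the two subcells containing them first become disjoint while their parents still meet; then (H) gives $|\xi-\eta|\gtrsim r^m$ while $|\xi-p|,|\eta-p|\lesssim r^m$, and since the p.c.f.\ structure bounds the number of subcell pairs meeting at $p$ at each level uniformly, the scale-$m$ contribution is $\lesssim r^{-m(\alpha+\beta)}\cdot r^{2m\alpha}\cdot r^{m(\beta'-\alpha)}=r^{m(\beta'-\beta)}$, again summable over $m$ because $\beta<\beta'$.

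The points requiring the most care are the quantitative comparison $|\xi-\eta|\asymp|\xi-p|+|\eta-p|$ coming from the angular separation in (C), and, in the (H) case, the bookkeeping that the number of level-$m$ subcells touching a fixed vertex stays bounded in $m$ and that the exceptional set of pairs whose cells never separate is $\nu\times\nu$-null. In both regimes the estimate collapses to a geometric series with ratio $2^{-(\beta'-\beta)}$ (resp.\ $r^{\beta'-\beta}$), so the hypothesis $\beta<\beta'$ is precisely what guarantees convergence; the assumption $\alpha<\beta$ keeps us in the standing regime of the paper and, together with $\beta'>\alpha$, ensures that $u$ is continuous so that the pointwise values at the vertices $p$ are meaningful.
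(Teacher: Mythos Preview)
Your overall strategy---split $K\times K$ into diagonal and off-diagonal blocks, handle the diagonal by change of variable, dismiss disjoint off-diagonal pairs by compactness, and treat touching cells via the H\"older estimate coming from Proposition~\ref{th2.4}---is exactly the paper's. The proof is correct.

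The difference lies in how you execute the touching-cell estimate. Under (C), the paper derives the inequality $|\xi-\eta|\ge c(|\xi-p|+|\eta-p|)\ge 2c\,|\xi-p|^{1/2}|\eta-p|^{1/2}$ and then, using $|u(\xi)-u(\eta)|\le C|\xi-\eta|^{(\beta'-\alpha)/2}$, factors the double integral into a product of two one-variable integrals $\int|\xi-p|^{-(\alpha+\beta-\beta')}\,d\nu(\xi)$, each finite because the exponent is below $\alpha$. Your dyadic-annuli summation achieves the same end but trades the clean factoring for a double series. Under (H), the paper does something simpler than your proposed multiscale classification: it observes that (H) again yields the same pointwise bound $|\xi-\eta|\ge c(|\xi-p|+|\eta-p|)$ (by picking the level $k$ with $|\xi-p|\in(r^k,r^{k-1}]$ and noting the level-$k$ cells must be disjoint), and then reuses the (C) estimate verbatim. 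Your level-$m$ decomposition is valid---the key point that the level-$(m-1)$ cells meeting forces them both to contain $p$, hence $|\xi-p|,|\eta-p|\lesssim r^{m-1}$, holds because $S_i(K)\cap S_j(K)$ is finite---but it is more bookkeeping than necessary. The paper's route buys uniformity: one geometric inequality \eqref{eq4.18} handles both hypotheses, and the integral estimate is written once.
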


\begin{proof}
First suppose that $K$ satisfies {\rm (C)}. By the separation of the cones, and the cosine law of a triangle, we can show that there exists  $c>0$ such that if $S_i(K)$ intersects $S_j (K)$ at $p$, and for $\xi \in S_i(K) \cap B(p, \delta) , \  \eta \in S_j(K) \cap B(p, \delta)$,
\begin{equation} \label{eq4.18}
 |\xi -\eta| \geq c (|\xi- p| + |\eta -p|) \geq 2c |\xi-p|^{1/2}\cdot |\eta-p|^{1/2}.
\end{equation}
Since $u\circ S_i \in \Lambda_{2,2}^{\alpha,\beta'/2}$, it follows from Theorem \ref{th2.1} that $u\circ S_i \in C^{(\beta'-\alpha )/2}(K)$. As $u(\xi) = \sum_{i=1}^N u(\xi) \chi_{S_i(K)} (\xi)$, we show that $u$ is also H\"older continuous of order $(\beta'- \alpha )/2$ at any $p \in S_i(K) \cap S_j(K)$.  Indeed we observe that for $\xi \in S_i(K) \cap B(p, \delta) ,  \eta \in S_j(K) \cap B(p, \delta)$,
 \begin{align*}
 |u(\xi) - u(\eta)|&\leq |u(\xi)-u(p)| + |u(\eta)-u(p)|\\
 &\leq C( |\xi -p|^{ (\beta'-\alpha)/2} + |\eta -p|^{(\beta'- \alpha)/2})\\
 & \leq 2C (|\xi -p|+ |\eta -p|)^{(\beta'- \alpha)/2}\\
 & \leq C_1|\xi - \eta|^{(\beta'- \alpha)/2}  \qquad (\hbox {by} \ \eqref{eq4.18}).
\end{align*}
 This together with \eqref{eq4.18} imply
 \begin{align} \label{eq4.19}
 & \int_{S_i(K)\cap B(p, \delta)}\int_{S_j(K)\cap B(p, \delta)} \frac {|u(\xi) -u(\eta)|^2}{|\xi- \eta|^{\alpha + \beta}}d\nu(\xi) d\nu(\eta) \nonumber\\
 \leq C_2 & \int_{S_i(K)\cap B(p, \delta)}\frac {d\nu(\xi)}{|\xi- p|^{\alpha+\beta-\beta'}}
 \cdot \int_{S_j(K)\cap B(p, \delta)}\frac {d\nu(\eta)}{|\eta- p|^{\alpha+\beta-\beta'}} < \infty .
 \end{align}

Now as $u(\xi) = \sum_{i=1}^N u(\xi) \chi_{S_i(K)} (\xi)$, we have
 \begin{align*}
 \en_K[u] = &\sum_{i, j =1}^N \int_{S_i(K)}\int_{S_j(K)} \frac {|u(\xi) - u(\eta)|^2} {|\xi- \eta|^{\alpha + \beta}}d\nu(\xi) d\nu(\eta) \\
 = & \big (\sum_{i=j} +  \sum_{i\not = j} \big ) \int_{S_i(K)}\int_{S_j(K)} \frac {|u(\xi) - u(\eta)|^2} {|\xi- \eta|^{\alpha + \beta}}d\nu(\xi) d\nu(\eta)\\
 := & S_I + S_{II}.
 \end{align*}
By a change of variable,
$$
S_I = \sum_{i=1}^n r_i^{\alpha -\beta} \int_K \int_K \frac {|u\circ S_i(\xi) - u\circ S_i(\eta)|^2} {|\xi- \eta|^{\alpha + \beta}}d\nu(\xi) d\nu(\eta) < \infty.
$$
By \eqref{eq4.19}, it is easy to check that $S_{II} < \infty$. This shows that $\en_K[u] < \infty$, so that $u \in {\mathcal D}_K = \Lambda_{2,2}^{\alpha,\beta/2}$.

\vspace{2mm}

Next we suppose that $K$ satisfies {\rm (H)}. Assume without loss of generality that ${\rm diam}(K)=1$. For $p \in S_i(K)\cap S_j(K)$, $i \neq j \in \Sigma$, let $\delta=\frac 12 \min\{|p-q|:q \in S_i(K)\cap S_j(K),q \neq p\}$. Following the same argument in the last paragraph, it suffices to show that \eqref{eq4.18} holds for $\xi \in S_i(K) \cap B(p, \delta)$ and $\eta \in S_j(K) \cap B(p, \delta)$. Indeed, suppose that $|\eta-p| \leq |\xi-p| \in (r^k, r^{k-1}]$ for some positive integer $k$. Let $\xxx, \yyy \in \mathcal J_k$ with $\xi \in S_\xxx(K) \subset S_i(K)$ and $\eta \in S_\yyy(K) \subset S_j(K)$. As ${\rm diam} (S_\xxx (K)), {\rm diam} (S_\yyy(K)) \leq r^k$, $S_\xxx(K) \cap S_\yyy(K) = \emptyset$. Hence by condition {\rm (H)},
$$
|\xi-\eta| \geq \gamma\cdot r^k \geq \gamma r |\xi-p| \geq \frac{\gamma r}{2} (|\xi-p|+|\eta-p|).
$$
This completes the proof.
\end{proof}

\medskip

We let  $V_0 = \pi(\mathcal P)$ be the ``boundary" of a p.c.f.~set $K$, and let $V_n = \cup_{\xxx \in \Sigma^n} S_\xxx(V_0)$, $n \geq 1$.

\begin{theorem} \label{th4.9}
With the same assumption as in Theorem \ref{th4.1}, assume further $K$ is a p.c.f.~set with boundary $V_0$ and satisfies {\rm (C)} or {\rm (H)}.  Suppose
\begin{equation} \label{eq4.16}
R^{(\lambda-\varepsilon)}(\xi, \eta)>0, \qquad \forall \, \xi \neq \eta \in V_0,
\end{equation}
for some $\varepsilon \in (0,\lambda)$, then  $\dom_K$ is dense in $C(K)$ (with the supremum norm).

\vspace{0.1cm}
Consequently, $\beta_1^* = \log \lambda_1^* / \log r$ if
\begin{equation} \label{eq4.17}
\lambda_1^*:= \inf \{\lambda>0: R^{(\lambda)}(\xi, \eta)>0, \ \forall \, \xi \neq \eta \in V_0\} \in (0, r^\alpha),
\end{equation}
and   $\beta_1^* \leq \alpha$ otherwise.
\end{theorem}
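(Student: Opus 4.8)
The plan is to deduce density of $\dom_K$ in $C(K)$ by building, for each $g\in C(K)$, a cellwise energy-minimizing interpolant $I_n g$ that lies in $\dom_K$ and converges to $g$ uniformly. I first fix the building blocks at the lower level. Set $\beta'=\log(\lambda-\varepsilon)/\log r$, so that $\alpha<\beta<\beta'$; the strict gap $\beta<\beta'$, which is exactly what $\varepsilon>0$ provides, will be the engine of the whole argument. Since $\lambda-\varepsilon\in(0,r^\alpha)$, the machinery of Section~\ref{sec:4} applies at this level: from $R^{(\lambda-\varepsilon)}(\xi,\eta)>0$ on $V_0$ and Lemma~\ref{th4.8} I get $R^{(\lambda-\varepsilon)}(p,V_0\setminus\{p\})>0$ for every $p\in V_0$, and hence by Theorem~\ref{th4.2} energy minimizers $u_p\in\dom_K^{(\beta')}$ with range $[0,1]$, $u_p(p)=1$ and $u_p=0$ on $V_0\setminus\{p\}$. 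For $\phi\in\ell(V_0)$ put $P\phi=\sum_{p\in V_0}\phi(p)u_p\in\dom_K^{(\beta')}$; then $P\phi|_{V_0}=\phi$, and $P\phi$ is the unique $\en_K^{(\beta')}$-minimizer with these boundary values. Because $\sum_p u_p$ is the minimizer with boundary values $\equiv1$, uniqueness forces $\sum_p u_p\equiv1$, so $\{u_p\}$ is a partition of unity and $\min_{V_0}\phi\le P\phi\le\max_{V_0}\phi$ on $K$.

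Now define, for $\xxx\in\Sigma^n$, the scale-$n$ interpolant by $(I_n g)\circ S_\xxx=P\big((g\circ S_\xxx)|_{V_0}\big)$. The p.c.f.\ property $S_\xxx(K)\cap S_\yyy(K)\subset V_n$ for $\xxx\ne\yyy$ makes $I_n g$ a well-defined continuous function, since at any shared vertex both branches return the common value of $g$; one also checks the self-similar compatibility $(I_n g)\circ S_i=I_{n-1}(g\circ S_i)$. Uniform convergence is then immediate from the maximum principle: on each $S_\xxx(K)$ both $g$ and $I_n g$ take values in $[\min_{S_\xxx(K)}g,\max_{S_\xxx(K)}g]$, so $\|I_n g-g\|_\infty$ is bounded by the modulus of continuity of $g$ evaluated at $\sup_{\xxx\in\Sigma^n}\diam S_\xxx(K)\le(\max_i r_i)^n\diam K\to0$.

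The crux is the membership $I_n g\in\dom_K^{(\beta)}=\Lambda_{2,2}^{\alpha,\beta/2}$, and this is where the strict gap $\beta<\beta'$ is indispensable, since Lemma~\ref{th4.10} only \emph{decreases} the smoothness exponent. For fixed $n$ I would take the ladder $\theta_k=\beta+k(\beta'-\beta)/n$, $0\le k\le n$, so that $\alpha<\theta_0=\beta<\theta_1<\cdots<\theta_n=\beta'$, and prove by downward induction on word length that $(I_n g)\circ S_\xxx\in\Lambda_{2,2}^{\alpha,\theta_{|\xxx|}/2}$. The base case $|\xxx|=n$ holds because the restriction equals some $P(\cdots)\in\Lambda_{2,2}^{\alpha,\beta'/2}$; for $|\xxx|=k<n$ one has $(I_n g)\circ S_{\xxx i}\in\Lambda_{2,2}^{\alpha,\theta_{k+1}/2}$ for every $i\in\Sigma$, and Lemma~\ref{th4.10} (with $\alpha<\theta_k<\theta_{k+1}$) yields $(I_n g)\circ S_\xxx\in\Lambda_{2,2}^{\alpha,\theta_k/2}$. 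Taking $\xxx=\vartheta$ gives $I_n g\in\Lambda_{2,2}^{\alpha,\beta/2}=\dom_K$, and combined with the uniform convergence this shows $\dom_K$ is dense in $C(K)$. I expect this exponent bookkeeping, together with justifying the maximum principle and the self-similar compatibility, to be the main technical obstacle.

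For the consequence, note that $\beta=\log\lambda/\log r$ decreases in $\lambda$, that $R^{(\lambda)}$ increases in $\lambda$ (smaller $\lambda$ raises the conductances and lowers the resistances), and that density of $\dom_K^{(\beta)}$ is downward closed in $\beta$ since $\Lambda_{2,2}^{\alpha,\beta/2}\subset\Lambda_{2,2}^{\alpha,\beta''/2}$ for $\beta>\beta''$. If $\lambda_1^*\in(0,r^\alpha)$, then for any $\lambda_0\in(\lambda_1^*,r^\alpha)$ I pick $\varepsilon$ with $\lambda_0-\varepsilon>\lambda_1^*$, so $R^{(\lambda_0-\varepsilon)}>0$ on $V_0$ and the density statement gives $\beta_1^*\ge\log\lambda_0/\log r$; letting $\lambda_0\downarrow\lambda_1^*$ yields $\beta_1^*\ge\log\lambda_1^*/\log r$. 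Conversely, for each $\lambda<\lambda_1^*$ some boundary pair satisfies $R^{(\lambda)}(\xi,\eta)=0$, so Corollary~\ref{th4.3} gives $\beta_1^*\le\log\lambda/\log r$, and letting $\lambda\uparrow\lambda_1^*$ gives the matching bound, whence $\beta_1^*=\log\lambda_1^*/\log r$. If instead $\lambda_1^*\ge r^\alpha$, the same use of Corollary~\ref{th4.3} across all $\lambda<r^\alpha$ forces $\beta_1^*\le\alpha$.
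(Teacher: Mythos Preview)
Your proof is correct and follows essentially the same architecture as the paper's: obtain basis functions on $V_0$ via Lemma~\ref{th4.8} and Theorem~\ref{th4.2} at the stronger exponent $\beta'=\log(\lambda-\varepsilon)/\log r$, then descend through the cell structure using Lemma~\ref{th4.10} along a decreasing ladder of exponents, and finally invoke Corollary~\ref{th4.3} for the upper bound on $\beta_1^*$. The only notable differences are cosmetic or expository. The paper uses the geometric ladder $\beta_n=\log(\lambda-\varepsilon/2^n)/\log r$ and proves by forward induction on $n$ that every $v\in\ell(V_n)$ extends into $\Lambda_{2,2}^{\alpha,\beta_n/2}$, then concludes density in $C(K)$ in one sentence; you instead fix $n$, use the linear ladder $\theta_k$, and run the induction downward on word length. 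Your version has the advantage of making the final density step explicit: by proving the partition-of-unity identity $\sum_p u_p\equiv 1$ (via uniqueness of the energy minimizer with boundary values $\equiv 1$, which forces $\en_K[\sum_p u_p]=0$ and hence constancy) you get the maximum principle $\min_{V_0}\phi\le P\phi\le\max_{V_0}\phi$, and with it the uniform convergence $I_ng\to g$ that the paper leaves to the reader.
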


\medskip

\begin{proof}
Let $V_0 = \{\xi_1,\xi_2,\cdots, \xi_m\}$. If $R(\xi_i,\xi_j)=0$ for some $i \neq j$, then $\mathcal D_K$ is not dense in $C(K)$ by Corollary \ref{th4.3}. Now suppose that \eqref{eq4.16} holds and let $\beta_0=\log (\lambda-\varepsilon)/\log r$. Then $R^{(\lambda-\varepsilon)}(\xi_i, V_0 \setminus \{\xi_i\})>0$ for all $i$ by Lemma \ref{th4.8}. Thus we can obtain a ``basis" of functions $\{u_i\}_{1 \leq i \leq m} \subset \Lambda_{2,2}^{\alpha,\beta_0/2}$ with $u_i(\xi_j) = \delta_{ij}$ following from Proposition \ref{th4.2}. Using  linear combinations, for any $v \in \ell(V_0)$, one can check that $u = \sum_{i=1}^m v(\xi_i)u_i \in \Lambda_{2,2}^{\alpha,\beta_0/2} \subset \mathcal D_K$ satisfies $u|_{V_0} = v$.

\vspace{0.1cm}

We use induction on $n$ to claim that for $\beta_n=\log(\lambda-\frac{\varepsilon}{2^n})/\log r$ and for any $v \in \ell(V_n)$, there exists $u \in \Lambda_{2,2}^{\alpha,\beta_n/2} \subset \mathcal D_K$ such that $u|_{V_n}=v$. Indeed, the above verifies the case $n=0$. Assume the statement holds for some $n$. Let $v \in \ell(V_{n+1})$. Note that $V_n=S_i^{-1}(V_{n+1} \cap S_i(K))$ for all $i \in \Sigma$. By induction hypothesis, for each $i$, there exists $w_i \in \Lambda_{2,2}^{\alpha,\beta_n/2}$ such that $w_i|_{V_n} =v|_{V_{n+1} \cap S_i(K)} \circ S_i$. Let $u(\xi)=\sum_{i=1}^N (w_i \circ S_i^{-1}) (\xi) \chi_{S_i(K)}(\xi)$. Then $u|_{V_{n+1}}=v$ and $u \circ S_i = w_i \in \Lambda_{2,2}^{\alpha,\beta_n/2}$. By Lemma \ref{th4.10}, $u \in \Lambda_{2,2}^{\alpha,\beta_{n+1}/2} \subset \mathcal D_K$. This completes the proof of induction.

\vspace{1mm}

As $n$ tends to infinity, $\beta_n$ decreases to $\beta=\log \lambda / \log r$, and $\bigcup_{n \geq 0} V_n$ is dense in $K$. Hence $\dom_K = \Lambda_{2,2}^{\alpha,\beta/2}$ is dense in $C(K)$.
\end{proof}

\bigskip

\section{Network reduction and examples}
\label{sec:6}

\noindent In this section, we will provide a device to calculate the limiting resistances and the critical exponents of the Besov spaces on $K$. We first recall some formal notions and techniques on electric network theory \cite {DS,LP}.

\medskip

Let  $\mathcal N=(V,c)$ denote the {\it (electric) network} with vertex set $V$ (finite or countably infinite) and conductance $c: V \times V \to [0, \infty)$ ($c(x,y)=c(y,x)$ for all $x,y \in V$). The edge set $E = \{(x,y) \in (V \times V) \setminus \Delta: c(x,y)>0\}$. An edge $(x,y) \in E$ is referred as a {\it resistor} (or {\it conductor}) with resistance $r_{xy}=r(x,y)=c(x,y)^{-1}$. The energy of $f \in \ell(V)$ on $\mathcal N$ is given  by
\begin{equation} \label{eq5.01}
\mathcal E_{\mathcal N}[f] = \frac 12 \sum_{x,y \in V} c(x,y)(f(x)-f(y))^2
\end{equation}
as in \eqref{eq2.2}. Also we can define the effective resistance $R_{\mathcal N}(A,B)$ between two nonempty subsets $A,B \subset V$ as in \eqref{eq4.1}.

\medskip

\begin{definition} \label{de5.1}
For two networks $\mathcal N_1=(V_1,c_1)$ and $\mathcal N_2=(V_2,c_2)$ with a set of common vertices $U \subset V_1 \cap V_2$, $\# U \geq 2$, we say that $\mathcal N_1$ and $\mathcal N_2$ are {\rm equivalent} on $U$ if for any $f \in \ell(U)$,
\begin{equation} \label{eq5.02}
\inf\{\mathcal E_{\mathcal N_1}[g_1]: g_1 \in \ell(V_1), g_1 |_U=f\} = \inf\{\mathcal E_{\mathcal N_2}[g_2]: g_2 \in \ell(V_2), g_2 |_U=f\}.
\end{equation}
\end{definition}

\medskip

It is easy to show that if $\mathcal N_1$ and $\mathcal N_2$ are equivalent on $U$, then they are also equivalent on any $U' \subset U$. As a result, $R_{\mathcal N_1}(A,B) = R_{\mathcal N_2}(A,B)$ for any nonempty $A,B \subset U$.

\medskip

The two most basic transformations to reduce networks to equivalent ones are the  {\it series law} and  the {\it parallel law} of resistance.
The third one is  the {\it $\Delta$-Y transform} (or {\it star-triangle Law}): let $\mathcal N_1$ be the triangle shaped network with $V_1=\{x,y,z\}$ as on the left of Figure \ref{fig:1}, and let $\mathcal N_2$ be the starlike network on the right with $V_2 = V_1 \cup \{p\}$;  for the two network to be equivalent, the resistances are related by
\begin{figure}[ht]
\begin{center}
\includegraphics[width=72mm,height=25mm]{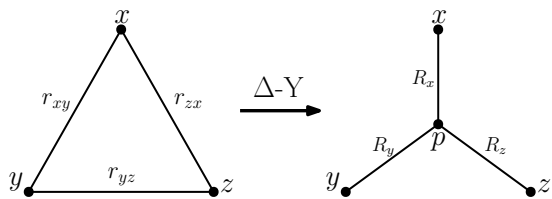}
\caption{{\small $\Delta$-Y transform}}
\label{fig:1}
\end{center}
\end{figure}
$$
R_x=\frac{r_{xy}r_{zx}}{r_{xy}+r_{yz}+r_{zx}}, \quad R_y=\frac{r_{xy}r_{yz}}{r_{xy}+r_{yz}+r_{zx}}, \quad R_z=\frac{r_{zx}r_{yz}}{r_{xy}+r_{yz}+r_{zx}}
$$
respectively. For some network $\mathcal N=\{V,c\}$, $\# V > 3$ with proper symmetry, we can add one vertex and transform it to an equivalent starlike network (see the examples in the sequel and \cite{K} for more details); we regard such transformation as a {\it generalized} $\Delta$-Y transform.

\medskip

More generally, we have from Proposition \ref{th4.0}, that if $V = V^\circ \cup \partial V$, $\# \partial V \geq 2$ then for $f \in \ell(\partial V)$,
\begin{equation} \label{eq6.3}
\min \{\mathcal E_{\mathcal N}[g]: g \in \ell(V), g |_{\partial V}=f\} = \frac 12 \sum_{x,y \in \partial V, x \neq y} c_\ast(x,y)(f(x)-f(y))^2.
\end{equation}
Then the network $\mathcal N_\ast=\{\partial V, c_\ast\}$ is equivalent to $\mathcal N$ on $\partial V$. For proper $\partial V$, the graph of network $\mathcal N_\ast$ may contain a complete subgraph $K_n$. In this case, we say that the transform $\mathcal N \to \mathcal N_\ast$ is a {\it local completion}. For example, as in Figure \ref{fig:2}, let $\partial V=\{x_1,x_2,\ldots,x_5\}$, then the graph of $\mathcal N_\ast$ is a complete graph $K_5$.

\begin{figure}[ht]
\begin{center}
\includegraphics[width=100mm,height=30mm]{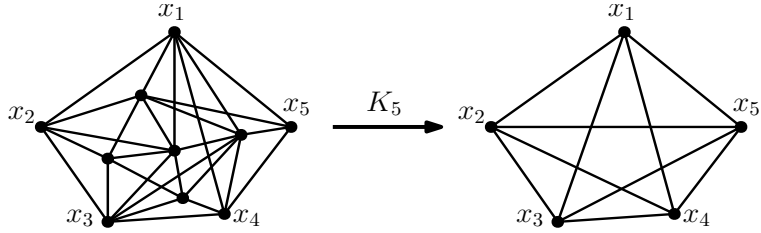}
\caption{{\small Local completion}}
\label{fig:2}
\end{center}
\end{figure}

Besides the above mentioned transformations, there are other  basic tools in network reduction we will use: cutting and shorting, and the Rayleigh's monotonicity law, namely, {\it if some resistances of resistors in a network are increased (decreased), then the effective resistance between any two points in the graph can only increase (decrease)}.

\vspace{5mm}

\begin{example} \label{ex5.1} {\bf Cantor middle third set}
Let $S_1(\xi)=\frac 13\xi $ and $S_2(\xi)=\frac 13(\xi+2)$ on $\mathbb R$. Then the self-similar set $K$ is the {\it Cantor middle-third set} with ratio $r = \frac 13$. It is totally disconnected  and the Hausdorff dimension is $\alpha = \frac{\log 2}{\log 3}$. The critical exponents $\beta_1^*=\beta_2^*=\beta_3^*=\infty$.
\end{example}
\begin{figure}[ht]
\begin{center}
\includegraphics[width=50mm,height=25mm]{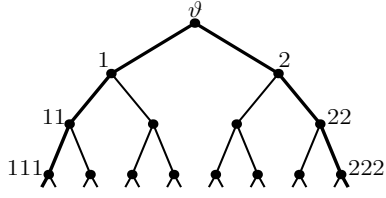}
\caption{{\small The limiting resistance for Cantor set}}
\label{fig:3}
\end{center}
\end{figure}

\noindent Indeed, for $\lambda \in (0,\frac 12)$ ($r^\alpha = \frac 12$), the limiting resistance between 0 and 1 (see Figure \ref{fig:3}) is
\begin{align*}
R^{(\lambda)}(0,1) &= R^{(\lambda)}(1^\infty,2^\infty) = \lim_{n \to \infty} R^{(\lambda)}_n (1^n,2^n) \\
&= \lim_{n \to \infty}\left(\sum_{k=1}^n c(1^k,1^{k-1})^{-1} + \sum_{k=1}^n c(2^k,2^{k-1})^{-1}\right) \\
& = 2 \lim_{n \to \infty}\sum_{k=1}^n (2\lambda)^{-k}= \frac{4\lambda}{1-2\lambda},
\end{align*}
and Theorem \ref{th4.9} implies the result. \hfill $\Box$

\medskip

\begin{example} \label{ex5.4} {\bf Sierpi{\'n}ski gasket}  It is the self-similar set $K$ generated by the maps $S_i(\xi)=\frac 12 (\xi - e_{i-1})+e_{i-1}$ where  $e_0=0$ and $e_i, i=1,\ldots,N-1$ are the standard basis vectors in $\mathbb{R}^{N-1}$.  It is a p.c.f.~set with $\mathcal P=\{1^\infty, 2^\infty, \ldots, N^\infty\}$, and  $\alpha = \dim_H K = \frac{\log N}{\log 2}$. For the $\lambda$-NRW ($r^\alpha = \frac 1N$),  the conductance is $c(\xxx,\xxx^-) = c(\xxx, \yyy) = (\lambda N)^{-|\xxx|}$ where $\xxx\sim_h\yyy$.
The critical exponent of $\Lambda^{\alpha, \beta/2}_{2,2}$ is
$$ \beta_1^*= \beta^*_2 = \beta^*_3 = \frac{\log (N+2)}{\log 2} \quad \hbox{at} \quad \lambda = \frac 1{N+2}.$$
(The critical exponent is known in \cite {Jo}.)
\end{example}

\begin{figure}[ht]
\begin{center}
\includegraphics[width=92mm,height=35mm]{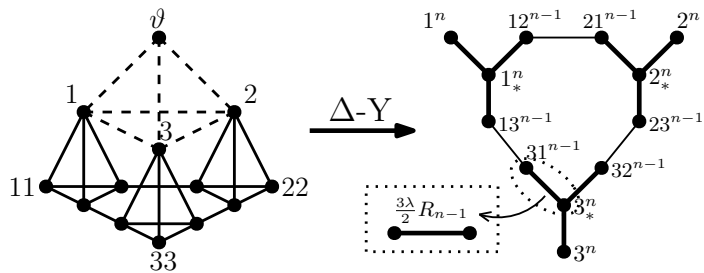}
\caption{{\small Cutting in Sierpi{\'n}ski gasket, $N=3$}}
\label{fig:8}
\end{center}
\end{figure}

We only prove the case $N=3$ (the other cases are quite similar; the reader is also advised to use $N=2$ to get a clearer picture). By symmetry, it suffices to find the limiting resistance $R^{(\lambda)}(1^\infty,2^\infty)$.  We denote $R^{(\lambda)}_n=R^{(\lambda)}_n(1^n,2^n)$ for short.

\medskip

To estimate the upper bound,  we delete the edges $(\vartheta,i)$, $(ij^k,ji^k)$, for $i \neq j \in \Sigma$, $k=0,1,\ldots, n-2$ in the subgraph of $X_n$ (see Figure \ref{fig:8}). Then we get a new subgraph consisting of $3$ copies of $X_{n-1}$ with $3$ horizontal edges $(ij^{n-1},ji^{n-1})$, $i \neq j \in \Sigma$ at level $n$ connecting them; we label these copies by $1,2,3$ such that the copy $i$ contains the vertex $i^n$. Then apply the the  $\Delta$-Y transform to the three vertices in $\mathcal A_i:=\{ij^{n-1}:  j \in \Sigma\}$ at the $n$-th level of each copy to get a starlike tree with center $i^n_*$, $i \in \Sigma$ respectively.  As the resistance between any pair of vertices in $\mathcal A_i$ equals $3\lambda R_{n-1}$, it follows that the resistance between $i^n_*$ and a vertex in $\mathcal A_i$ in the corresponding starlike tree is $\frac{3\lambda}{2}R_{n-1}$. Moreover, between any pair $i^n_*, j^n_*$, $i \neq j$, there is a 3-step path $[i^n_*, ij^{n-1}, ji^{n-1}, j^n_*]$. Replacing these paths with resistors, we get a triangle with vertices $\{i^n_*: i \in \Sigma\}$ and each side has resistance $3\lambda R_{n-1}+(3\lambda)^n$.

By applying the monotonicity law and the series law,
\begin{align*}
R^{(\lambda)}_n &\leq R(1^n,1^n_*)+R(1^n_*,2^n_*)+R(2^n_*,2^n) \\
&= \frac{3\lambda}{2}R^{(\lambda)}_{n-1}+\frac 23(3\lambda R^{(\lambda)}_{n-1}+(3\lambda)^n)+\frac{3\lambda}{2}R^{(\lambda)}_{n-1} \\
&= 5\lambda R^{(\lambda)}_{n-1} + 2\cdot3^{n-1}\lambda^n.
\end{align*}
Hence $R^{(\lambda)}(1^\infty,2^\infty)=\lim_{n \to \infty} R^{(\lambda)}_n = 0$ for $\lambda \in (0,\frac 1{5})$. By Proposition \ref{th4.3} and Theorem \ref{th4.4}, we have $\beta_1^* \leq \beta_3^* \leq \frac{\log 5}{\log 2}$.

\vspace{2mm}

 To obtain the lower bound of the critical exponent, we need another technique. We reassign the conductance on the $n$-th level of the subgraph $X_n$ ($n \geq 1$): for $\mu>0$, let $\widetilde c(\xxx,\xxx^-) = (3\lambda)^{-|\xxx|}$ for $\xxx \in X_n$, and let
$$
\widetilde c(\xxx,\yyy) =
\begin{cases}
\ (3\lambda)^{-|\xxx|}, & \quad \hbox{if } |\xxx|<n, \\
\ \mu^{-1}(3\lambda)^{-n}, & \quad \hbox{if } |\xxx|=n,
\end{cases}
\qquad \hbox{for } \xxx \sim_h \yyy \in X_n.
$$
Denote the level-$n$ resistance between $1^n$ and $2^n$ with respect to the above $\widetilde c$ by $R_n^{(\lambda,\mu)}$. Then apply the generalized $\Delta$-Y transforms to each triangle $(\xxx,\xxx 1,\xxx 2, \xxx 3)$ for $\xxx \in \mathcal J_{n-1}$, and then replace each pair $\{\xxx,\xxx'\}$ by a single $\xxx$ (see Figure \ref{fig:5} for $N=2$ for a clearer illustration; Figure \ref{fig:9} for $N=3$ corresponds to the dotted box in Figure \ref{fig:5}).
\begin{figure}[ht]
\begin{center}
\includegraphics[width=90mm,height=55mm]{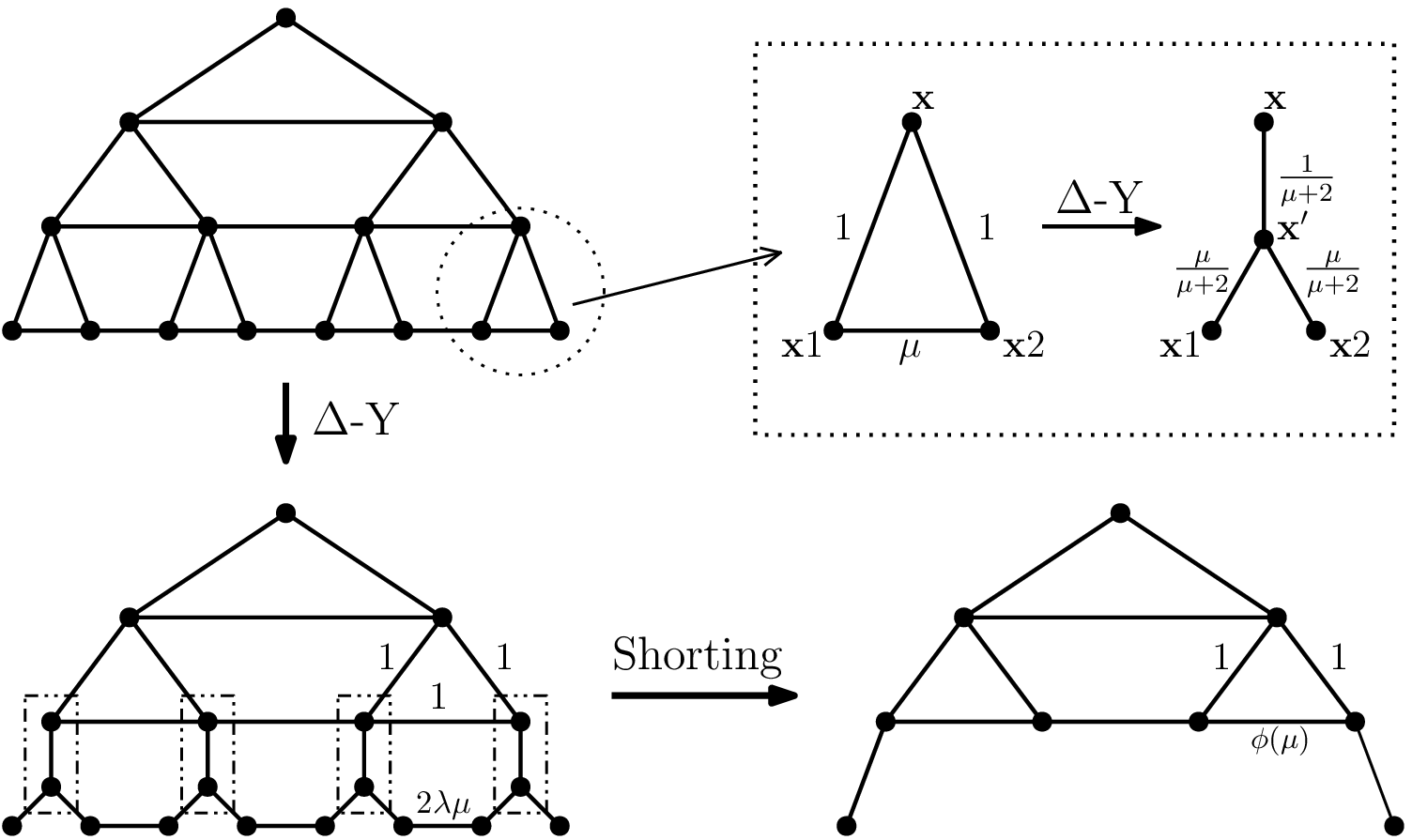}
\caption{{\small $\mu$-parameter and shorting for $N=2$
}} \label{fig:5}
\end{center}
\end{figure}
\begin{figure}[ht]
\begin{center}
\includegraphics[width=80mm,height=25mm]{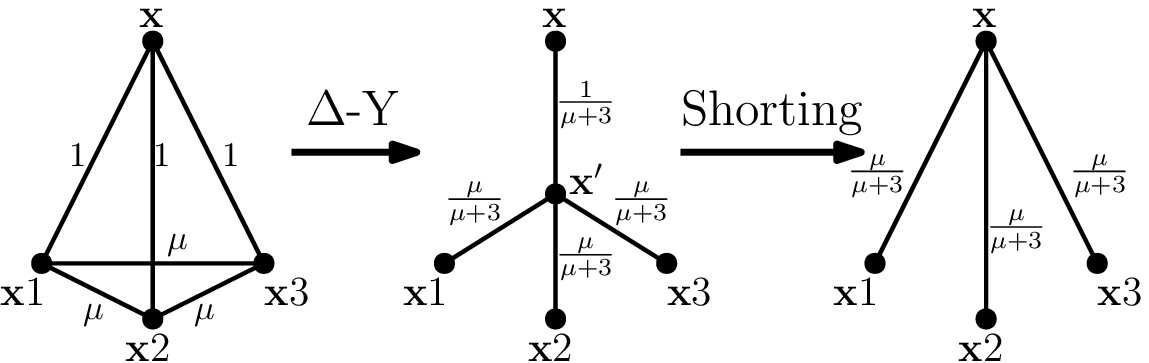}
\caption{{\small $\mu$-parameter and shorting for $N=3$}}
\label{fig:9}
\end{center}
\end{figure}

\medskip

\noindent We have
\begin{equation} \label{eq5.6}
R^{(\lambda,\mu)}_n \geq \frac{2\mu}{\mu+3}(3\lambda)^n + R^{(\lambda,\phi(\mu))}_{n-1},
\end{equation}
where $\phi$ is given by the parallel resistance formula
\begin{equation} \label{eq5.7}
\phi(\mu)^{-1} = \left[3\lambda\left(\frac{2\mu}{\mu+3}+\mu\right)\right]^{-1} + 1.
\end{equation}
The equation $\phi(\mu)=\mu$ has a solution $\overline \mu \in (0,1)$ if and only if $\lambda>\frac 1{5}$. With such fixed point $\overline \mu$, by \eqref{eq5.6}, we have
$R^{(\lambda)}(1^\infty,2^\infty) \geq \lim_{n \to \infty} R^{(\lambda,\overline \mu)}_n \geq R^{(\lambda,\overline \mu)}_1 > 0$. By Theorem \ref{th4.8}, we have $\frac{\log 5}{\log 2} \leq \beta_1^* \leq \beta_3^*$, and completes the proof. \hfill $\Box$

\bigskip

In the next example, we adjust the above method slightly for the new situation with two different limiting resistances of $(i^\infty, j^\infty)$.

\medskip

\begin{example} \label{ex5.5} {\bf Pentagasket}
The pentagasket is the attractor $K$ of the five similitudes  $S_i(\xi)=\frac{3-\sqrt 5}{2}(\xi-p_i)+p_i$, here we identify  $\mathbb R^2$ with $\mathbb C$, and $p_i = e^{2\pi i /5}$. It is a p.c.f.~set with $\mathcal P = \{1^\infty, 2^\infty, \ldots , 5^\infty\}$, and $\alpha := \dim_H K = -\frac{\log 5}{\log ((3-\sqrt 5)/2)}$. As $r^\alpha = \frac 15$, the $\lambda$-NRW has conductance $(5\lambda)^{-n}$ on level $n$. The critical exponent is
$$
\beta_1^* =   \beta_2^* = \beta_3^* =\frac{\log ((\sqrt{161}-9)/40)}{\log((3-\sqrt 5)/ 2)}   \quad \hbox {at} \quad \lambda = \frac {\sqrt{161}-9}{40}.
$$
 (The critical exponent is known in \cite{Ku}.)
\end{example}

\begin{figure}[ht]
\begin{center}
\includegraphics[width=90mm,height=40mm]{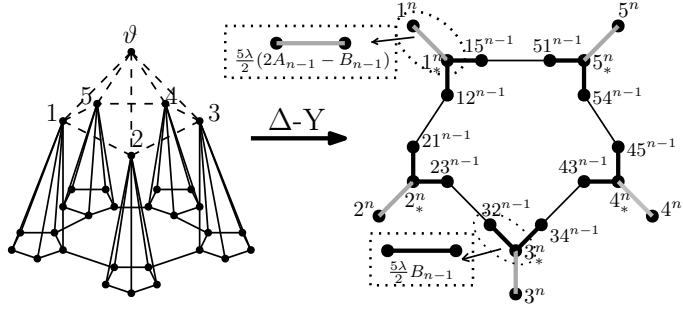}
\caption{{\small Cutting in pentagasket}} \label{fig:10}
\end{center}
\end{figure}

To determine the critical exponent,  we need to calculate the limiting resistances
$$
R^{(\lambda)}(1^\infty,2^\infty) \quad \hbox {and} \quad R^{(\lambda)}(1^\infty,3^\infty).
$$
We denote $A_n = R^{(\lambda)}_n(1^n,3^n)$ and $B_n = R^{(\lambda)}_n(1^n,2^n)$ for short.
By referring to Figure \ref{fig:10}, and using the same technique as before, we have
\begin{align*}
A_n &\leq R(1^n,1^n_*)+R(1^n_*,3^n_*)+R(3^n_*,3^n) \\
&= 5\lambda(2A_{n-1}-B_{n-1})+\left[(10\lambda B_{n-1}+2(5\lambda)^n)^{-1}+(15\lambda B_{n-1}+3(5\lambda)^n)^{-1}\right]^{-1} \\
&= 10\lambda A_{n-1}+\lambda B_{n-1}+\frac 65(5\lambda)^n.
\end{align*}
Analogously, we have $B_n \leq 10\lambda A_{n-1}-\lambda B_n+\frac 45 (5\lambda)^n$. As the coefficient matrix $\left(
\begin{array}{cc}
10\lambda & \lambda \\
10\lambda & -\lambda \\
\end{array}
\right)$ has eigenvalues $\frac{9\pm\sqrt{161}}{2}\lambda$, we have $\lim\limits_{n \to \infty} A_n = \lim\limits_{n \to \infty} B_n = 0$ if $\lambda < (\frac{9+\sqrt{161}}{2})^{-1} = \frac{\sqrt{161}-9}{40}$.
Hence $R^{(\lambda)}(1^\infty,2^\infty)=R^{(\lambda)}(1^\infty,3^\infty) = 0$ for $\lambda \in (0,\frac{\sqrt{161}-9}{40})$. By Proposition \ref{th4.3} and Theorem \ref{th4.4}, we have $\beta_1^* \leq \beta_3^* \leq \frac{\log ((\sqrt{161}-9)/40)}{\log((3-\sqrt 5)/ 2)}$.

\vspace{2mm}

 To obtain the lower bound of the critical exponents, we reassign the conductance on the bottom of the subgraph $X_n$ ($n \geq 1$) with two parameters $\mu_1$ and $\mu_2$
: for $\mu_1,\mu_2 \in (0,1)$, let $\widetilde c(\xxx,\xxx^-) = (5\lambda)^{-|\xxx|}$ for $\xxx \in X_n$, and let
$$
\widetilde c(\xxx,\yyy) =
\begin{cases}
\ (5\lambda)^{-|\xxx|}, & \quad \hbox{if } |\xxx|<n, \\
\ \mu_1^{-1}(5\lambda)^{-n}, & \quad \hbox{if } |\xxx|=n \hbox{ and } \xxx^-=\yyy^-, \\
\ \mu_2^{-1}(5\lambda)^{-n}, & \quad \hbox{if } |\xxx|=n \hbox{ and } \xxx^-\neq \yyy^-,
\end{cases}
\qquad \hbox{for } \xxx \sim_h \yyy \in X_n.
$$
Denote the level-$n$ resistance between $1^n$ and $3^n$ (or $2^n$) with respect to above $\widetilde c$ by $A_n^{(\mu_1,\mu_2)}$ (or $B_n^{(\mu_1,\mu_2)}$).  We apply the local completion to each cone $(\xxx 1,\xxx 11,\xxx 13, \xxx 14)$, $(\xxx 2, \xxx 22, \xxx 24, \xxx 25)$, $(\xxx 3,\xxx 33, \xxx 35,\xxx 31)$, $(\xxx 4, \xxx 44, \xxx 41, \xxx 42)$, $(\xxx 5,\xxx 55, \xxx 52, \xxx 53)$ for $\xxx \in \mathcal J_{n-2}$, and then replace each complete subgraph $K_4$ by a starlike network with greater energy (Figure \ref{fig:11}).
\begin{figure}[ht]
\begin{center}
\includegraphics[width=85mm,height=25mm]{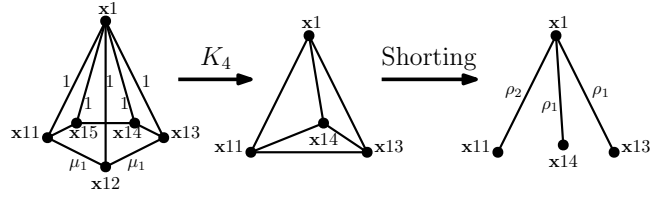}
\caption{{\small Shorting in pentagasket}}\label{fig:11}
\end{center}
\end{figure}
By a direct calculation, the conductance $c_\ast$ in $K_4$ is given by
$$
c_\ast(\xxx 1,\xxx 11)=\frac{\mu_1+4}{\mu_1+2}, \quad c_\ast(\xxx 1,\xxx 13)=c_\ast(\xxx 1,\xxx 14)=\frac{\mu_1+3}{\mu_1+2},
$$
$$
c_\ast(\xxx 11,\xxx 13)=c_\ast(\xxx 11,\xxx 14)=\frac{1}{\mu_1(\mu_1+2)}, \quad c_\ast(\xxx 13, \xxx 14)=\frac{1}{\mu_1},
$$
and the resistances in the star are given by
$$
\rho_1=\frac{\mu_1(\mu_1+2)}{\mu_1^2+5\mu_1+5}, \quad \rho_2=\frac{\mu_1(\mu_1+2)^2}{(\mu_1+1)(\mu_1^2+5\mu_1+5)}.
$$
By the monotonicity law and series law,
\begin{equation} \label{eq5.8}
A^{(\mu_1,\mu_2)}_n \geq 2\rho_2(5\lambda)^n + A^{(\phi_1(\mu_1,\mu_2),\phi_2(\mu_1,\mu_2))}_{n-1},
\end{equation}
(same inequality holds if we replace $A$ by $B$)
where $\phi_1$ and $\phi_2$ are given by the parallel resistance formulas
\begin{equation} \label{eq5.9}
\begin{cases}
\ \phi_1(\mu_1,\mu_2)^{-1} = \left[5\lambda\left(2\rho_1+\mu_2\right)\right]^{-1} + 1, \\
\ \phi_2(\mu_1,\mu_2)^{-1} = \left[5\lambda\left(2\rho_2+\mu_2\right)\right]^{-1} + 1.
\end{cases}
\end{equation}
The equations $\phi_i(\mu_1,\mu_2)=\mu_i$, $i=1,2$ have a solution $(\overline \mu_1, \overline \mu_2) \in (0,1)^2$ if and only if $\lambda>\frac{\sqrt{161}-9}{40}$. With such fixed point $(\overline \mu_1, \overline \mu_2)$, by \eqref{eq5.8}, we have
$R^{(\lambda)}(1^\infty,3^\infty) \geq \lim_{n \to \infty} A^{(\overline \mu_1, \overline \mu_2)}_n \geq A^{(\overline \mu_1, \overline \mu_2)}_1 > 0$. Similarly we also have $R^{(\lambda)}(1^\infty,2^\infty) > 0$ if $\lambda>\frac{\sqrt{161}-9}{40}$. By Theorem \ref{th4.8}, we have $\frac{\log ((\sqrt{161}-9)/40)}{\log((3-\sqrt 5)/ 2)} \leq \beta_1^* \leq \beta_3^*$.  \hfill  $\Box$

\bigskip

More computational issues on the critical exponent of nested fractals can be found in \cite{K}. Finally, we give an example that $\beta^*_1 \not = \beta^*_3$.

\begin{example} \label{ex5.3} {\bf Cantor set$\times$interval} \  Let $\Sigma=\{1,2,3,4,5,6\}$ and let $p_1=0, p_2=(0,\frac 13), p_3=(0,\frac 23), p_4=(\frac 23,0), p_5=(\frac 23,\frac 13), p_6=(\frac 23,\frac 23)$ in $\mathbb R^2$. For $i \in \Sigma$, let $S_i(\xi) = \frac 13\xi + p_i$ on $\mathbb R^2$. Then the self-similar set $K$ is the product of a Cantor middle-third set and a unit interval (see the associated augmented tree in Figure \ref{fig:6}), and $\alpha=\dim_H K=\frac {\log 2}{\log 3}+1=\frac {\log 6}{\log 3}$.   The $\lambda$-NRW has conductance $(6\lambda)^{-n}$ on the $n$-th level ($r^\alpha=\frac 16$). The critical exponents are
$$
\beta^*_1 = 2 \  \ \hbox {at} \ \  \lambda = \frac 19;  \qquad \beta^*_2 = \beta^*_3 =\infty
$$

\begin{figure}[ht]
\begin{center}
\includegraphics[width=80mm,height=28mm]{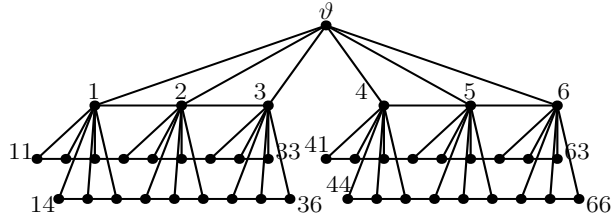}
\caption{{\small The graph for Cantor set$\times$interval}}\label{fig:6}
\end{center}
\end{figure}

{\rm First we show that $R^{(\lambda)}(1^\infty,4^\infty)>0$ for any $\lambda>0$. For $n \geq 1$, consider a function $f_n$ on $X_n$ defined by
$$
f_n(\xxx) = \begin{cases}
\ 1/2, & \quad \hbox{if } \xxx = \vartheta, \\
\ 1, & \quad \hbox{if } i_1=1,2,3, \\
\ 0, & \quad \hbox{if } i_1=4,5,6,
\end{cases}
\qquad \hbox{for } \xxx=i_1 i_2 \cdots i_k \in X_n.
$$
Then by \eqref{eq4.1}, $R^{(\lambda)}_n(1^n,4^n) \geq (\mathcal E_{X_n}[f_n])^{-1} = (6\cdot(\frac 12)^2\cdot\frac 1{6\lambda})^{-1} = 4\lambda$. Thus for any $\lambda>0$, $R^{(\lambda)}(1^\infty,4^\infty) = \lim_{n \to \infty}R^{(\lambda)}_n(1^n,4^n) \geq 4\lambda > 0$. By Theorem \ref{th4.4}, we have $\beta_3^*=\infty$. Also it is easy to see that $\beta_2^*=\infty$.

\vspace{2mm}
\begin{figure}[ht]
\begin{center}
\includegraphics[width=85mm,height=50mm]{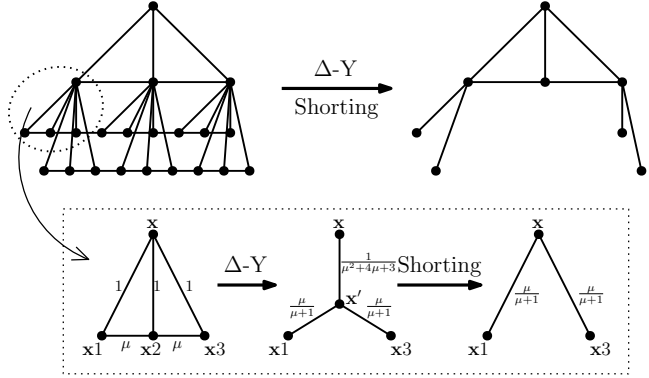}
\caption{{\small Shorting in Cantor set$\times$interval}}\label{fig:7}
\end{center}
\end{figure}

 Next we consider the limiting resistance $R^{(\lambda)}(1^\infty,3^\infty)$ by using a  similar shorting device as in previous examples. Denote $R^{(\lambda)}_n = R^{(\lambda)}_n(1^n,3^n)$ for short. As in Example \ref{ex5.5}, we reassign the conductance on the bottom of the subgraph $X_n$ by an additional factor $\mu^{-1}$, and by the same method applied to triangles $(\xxx,\xxx 1,\xxx 3)$ (also to  $(\xxx, \xxx 4,\xxx 6)$, see Figure \ref{fig:7}), we have
\begin{equation} \label{eq5.3}
R^{(\lambda,\mu)}_n \geq \frac{2\mu}{\mu+1}(6\lambda)^n + R^{(\lambda,\phi(\mu))}_{n-1},
\end{equation}
where $\phi$ is given by
\begin{equation} \label{eq5.4}
\phi(\mu)^{-1} = 2\left[6\lambda\left(\frac{2\mu}{\mu+1}+\mu\right)\right]^{-1} + 1.
\end{equation}
The equation $\phi(\mu)=\mu$ has a solution $\overline \mu \in (0,1)$ if and only if $\lambda>\frac 19$.

\medskip

With such fixed point $\overline \mu$, by \eqref{eq5.3}, we have
$R^{(\lambda)}(1^\infty,3^\infty) \geq \lim_{n \to \infty} R^{(\lambda,\overline \mu)}_n \geq R^{(\lambda,\overline \mu)}_1 > 0$.  On the other hand, we show that if $R^{(\lambda)}(1^\infty,3^\infty)>0$, then  $\lambda \geq \frac 19$. Without loss of generality, we assume that $0< \lambda < 1/6$. For $n \geq 1$, let $f_n$ be the energy minimizer (harmonic function) on $X_n$ with boundary conditions $f_n(1^n)=1$ and $f_n(3^n)=0$. Then $R_n(1^n,3^n) = \mathcal E_{X_n}[f_n]^{-1}$. By Corollary \ref{th4.3'} (iv) $\Rightarrow$ (iii), let $C_1:=\sup_{n \geq 1} \mathcal E_{X_n}[f_n] = (\inf_{n \geq 1} R_n(1^n,3^n))^{-1}<\infty$. Pick a positive integer $n_1$ such that $\sum_{n=n_1+1}^\infty (6\lambda)^n < \frac 1{36C_1}$. Then for $n \geq n_1$,
\begin{equation} \label{eq5.5}
|f_n(1^n)-f_n(1^{n_1})|^2 \leq \mathcal E_{X_n}[f_n]R_{X_n}(1^n, 1^{n_1}) \leq C_1\sum_{k=n_1+1}^n (6\lambda)^k \leq \frac 1{36},
\end{equation}
which implies $f_n(1^{n_1}) \geq \frac 56$. Analogously we have $f_n(3^{n_1}) \leq \frac 16$. Let $m=n-n_1$. With a similar argument as in \eqref{eq5.5}, for $\zzz \in \{1,4\}^m$,
$$
|f_n(1^{n_1}\zzz)-f_n(1^{n_1})|^2 \leq \mathcal E_{X_n}[f_n]R_{X_n}(1^{n_1}\zzz, 1^{n_1}) \leq \frac 1{36},
$$
which implies $f_n(1^{n_1}\zzz) \geq \frac 23$. Analogously we have $f_n(3^{n_1}\www) \leq \frac 13$ for all $\www \in \{3,6\}^m$.
Now, for $\zzz = i_1 i_2 \cdots i_m \in \{1,4\}^m$, denote the word $j_1 j_2 \cdots j_m \in \{3,6\}^m$ with $j_k=i_k+2$ for all $k$ by $\zzz'$. Note that for each $\zzz \in \{1,4\}^m$, there is a horizontal path with length $3^n-1$ from $1^{n_1}\zzz$ to $3^{n_1}(\zzz')$. The resistance on such path is given by $R_{\mathcal J_n}(1^{n_1}\zzz, 3^{n_1}(\zzz')) = (3^n-1)(6\lambda)^n$. Counting the energy on these $2^m$ disjoint horizontal paths, we get
$$
C_1 \geq \mathcal E_{X_n}[f_n] \geq \sum_{\zzz \in \{1,4\}^m} \frac{[f_n(1^{n_1}\zzz)-f_n(3^{n_1}(\zzz'))]^2}{R_{\mathcal J_n}(1^{n_1}\zzz, 3^{n_1}(\zzz'))} \geq \frac{2^{n-n_1}}{9(3^n-1)(6\lambda)^n}
$$
for arbitrary $n \geq n_1$. Hence $\lambda \geq \frac 19$ and the claim follows. By Proposition \ref{th4.3}, we have $\beta_1^*=2$. \hfill  $\Box$}
\end{example}

\bigskip

\noindent {\bf Remark.} To investigate the situation that $\beta_1^*<\beta_3^*$, it is natural to study the products of self-similar sets. But in general, if $K_1$ and $K_2$ are connected self-similar sets, then the critical exponent of the product $K_1 \times K_2$ satisfies
$$\beta_1^* \leq \max\{\dim_H K_1,\dim_H K_2\}+1 \leq \dim_H K_1+ \dim_H K_2 = \alpha.$$
Although the criteria in the last section cannot be applied directly, it still has a similar link between the effective resistance of $\en_X$ and the energy on the product (see \cite{K} for more details). For example, in the product $[0,1] \times SG$, the limiting resistances $R^{(\lambda)}(i^\infty,j^\infty)$ have two critical exponents $\lambda_1^* = \frac 14$ and $\lambda_3^* = \frac 15$ for various $i,j$, while $2=\beta_1^*<\frac {\log 5}{\log 2}=\beta_3^*< \alpha = \frac {\log 6}{\log 2}$. With a similar technique as in Example \ref{ex5.3}, it follows that $\beta_1^*=2$ if one of $K_i$ is a unit interval. To generalize the results above, we may leave a conjecture as
$$
\beta_1^*(K_1 \times K_2) = \min\{\beta_1^*(K_1),\beta_1^*(K_2)\}, \quad \hbox{and} \quad \beta_3^*(K_1 \times K_2) = \max\{\beta_3^*(K_1),\beta_3^*(K_2)\}.
$$

\bigskip

\section{Remarks and open problems}
\label{sec:7}

\noindent The calculation of the critical exponents in Section \ref{sec:6} depends very much on the p.c.f.~property. It is challenging to find an effective technique to estimate the non-p.c.f.~sets like the Sierpi{\'n}ski carpet.

\medskip

In our discussions, we assumed the return ratio $\lambda \in (0,r^\alpha)$ (hence $\alpha < \beta_1^*$) in order to guarantee functions in the domain of the induced bilinear form on $K$ are continuous (Proposition \ref{th2.4}). While the condition is satisfied by the well-known fractals, it also excludes the situation that $\beta_1^* \leq \alpha$, which contains important examples (e.g., the classical domain, and product of fractals). We conjecture that the consideration in the paper is possible to adjust to this case. We also like to know if there is a nice sufficient condition for $\alpha <\beta_1^*$ based on the geometry of the self-similar sets.

\medskip

We call a self-similar set  $K$  {\it mono-critical} if it has a single critical exponent $\beta^* = \beta^*(K)$, i.e., $\beta^*=\beta_1^*=\beta_2^*=\beta_3^*$. It is known that all nested fractals, Cantor-type sets, and some non-p.c.f.~sets including Sierpi{\'n}ski carpet are mono-critical ( see \cite{BB1,BB2,BB3}). For these sets, the critical exponent plays an important role. It is well-known that $\Lambda_{2,2}^{\alpha,\beta^*/2}$ is trivial (see \cite{Jo,P1}) while $\Lambda_{2,\infty}^{\alpha,\beta^*/2}$ admits a local regular Dirichlet form on $L^2(K)$. On the other hand, it is constructed in \cite{GuL} a modified Vicsek set that is mono-critical; on this set, $\Lambda_{2,\infty}^{\alpha,\beta^*/2}$ is dense in $L^2(K, \nu)$, but is not dense in $C(K)$, and there is a local regular Dirichlet form on $K$ which does not define on $\Lambda_{2,\infty}^{\alpha,\beta^*/2}$ or satisfies the energy self-similar identity in \cite{Ki1}.

\medskip

In conclusion, the question of constructing  a local Dirichlet form  on a self-similar set  is still unsettled. It has much to do  with the functional behavior of the Besov spaces at the critical exponents. Our study offers an alternative approach of using the return ratio $\lambda$ of the random walk and the induced Dirichlet form to study these critical cases. It will be interesting to carry out this initiation to a greater extension.

\bigskip

\noindent {\bf Acknowledgements}: The authors would like to thank Professors A.~Grigoryan, J.X.~Hu and Dr.~Q.S.~Gu for many valuable discussions. They also thank Professor S.M.~Ngai for going through the manuscript carefully. Part of the work was carried out while the second author was visiting the University of Pittsburgh, he is grateful to Professors C.~Lennard and J.~Manfredi for the arrangement of the visit.

\bigskip
{\small
}

\bigskip
\bigskip

\noindent Shi-Lei Kong, Fakult{\"a}t f{\"u}r Mathematik, Universit{\"a}t Bielefeld, Postfach 100131, 33501 Bielefeld, Germany. \\
skong@math.uni-bielefeld.de

\bigskip
\noindent Ka-Sing Lau, Department of Mathematics, The Chinese University of Hong Kong, Hong Kong.\\
\& \ School of Mathematics and Statistics, Central China Normal University, Wuhan, 430079,
\ China.\\
\& Department of Mathematics, University of Pittsburgh, Pittsburgh, PA 15260, USA.\\
kslau@math.cuhk.edu.hk


\begin{thebibliography}{10}

\bibitem{A}
R.~Adams, \emph{Sobolev Spaces}, Academic Press, New York, 1975.

\bibitem{An}
A.~Ancona, \emph{Positive harmonic functions and hyperbolicity}, in: Potential Theory: Surveys and Problems, in: Lecture Notes in Math., vol.~1344, Springer, Heidelberg, 1988, pp.~1--23.
%\bibitem{B}
%B.~Barlow, \emph{Diffusions on fractals}, Lecture Notes in Math., vol. 1690,
%  Springer, Heidelberg, 1998, pp.~1--121.

\bibitem{BB1}
M.~Barlow and R.~Bass, \emph{The construction of {B}rownian motion on the {S}ierpi\'{n}ski carpet}, Ann. Inst. Henri Poincar\'{e} \textbf{25} (1989), 225--257.

\bibitem{BB2}
M.~Barlow and R.~Bass, \emph{On the resistance of the {S}ierpi\'{n}ski carpet}, Proc. Roy. Soc. London Ser.~A, \textbf{431} (1990), 345--360.

\bibitem{BB3}
M.~Barlow and R.~Bass, \emph{Transition densities for Brownian motion on the {S}ierpi\'{n}ski carpet}, Probab. Theory Related Fields, \textbf{91} (1992), 307--330.

\bibitem{BeD}
A.~Beurling and J.~Deny, \emph{Espaces de {D}irichlet. {I}. {L}e cas lmentaire}, Acta Mathematica \textbf{99} (1958), 203--224.

\bibitem{CK}
Z.Q.~Chen and T.~Kumagai, \emph{Heat kernel estimates for stable-like processes on $d$-sets}, Stochastic Processes and their Applications \textbf{108} (2003), 27--62.

\bibitem{CKW1}
Z.Q.~Chen, T.~Kumagai, and J.~Wang, \emph{Stability of heat kernel estimates for symmetric jump processes on metric measure spaces}, arXiv:1604.04035.

%\bibitem{CKW2}
%Z.Q.~Chen, T.~Kumagai, and J.~Wang, \emph{Stability of parabolic {H}arnack inequalities for symmetric non-local {D}irichlet forms}, arXiv:1609.07594.

\bibitem{DL}
Q.R.~Deng and K.S.~Lau, \emph{Open set condition and post-critically finite
  self-similar sets}, Nonlinearity \textbf{21} (2008), 1227--1232.

\bibitem{DS}
P.~Doyle and L.~Snell, \emph{Random Walks and Electric Networks}, The Carus Math. Monogr., vol.~22, 1984.

\bibitem{Fa}
K.~Falconer, \emph{Fractal {G}eometry: {M}athematical {F}oundations and
  {A}pplications}, John Wiley and Sons, Ltd., Chichester, 1990.

\bibitem{FOT}
M.~Fukushima, Y.~Oshima and  M.~Takeda, \emph{Diricht Forms and Symmetric Markov Processes}, de Gruyter Stud. Math., vol.~19, Walter de Gruyter \& Co., Berlin, 1994.

\bibitem{GH1}
A.~Grigor'yan and J.X.~Hu, \emph{Off-diagonal upper estimates for the heat kernel of the {D}irichlet forms on metric spaces}, Invent. Math. \textbf{174} (2008), 81--126.

\bibitem{GHH}
A.~Grigor'yan, E.Y.~Hu, and J.X.~Hu, \emph{Lower estimates of heat kernels for non-local {D}irichlet forms on metric measure spaces}, J. Funct. Anal. \textbf{272} (2017), 3311--3346.

%\bibitem{GHH2}
%A.~Grigor'yan, E.Y.~Hu, and J.X.~Hu, \emph{Two-sided estimates of heat kernels of jump type {D}irichlet
%  forms}, preprint.

%\bibitem{GH2}
%A.~Grigor'yan and J.X.~Hu, \emph{Upper bounds of heat kernels on doubling spaces}, Mosco Math. J.
%  \textbf{14} (2014), 505--563.

\bibitem{GHL1}
A.~Grigor'yan, J.X.~Hu, and K.S.~Lau, \emph{Heat kernels on metric-measure
  spaces and an application to semilinear elliptic equations}, Trans. Amer.
  Math. Soc. \textbf{355} (2003), 2065--2095.

\bibitem{GHL3}
A.~Grigor'yan, J.X.~Hu, and K.S.~Lau, \emph{Estimates of heat kernels for non-local regular {D}irichlet forms}, Tran. Amer. Math. Soc. \textbf{366} (2014), 6397--6441.

\bibitem{GHL2}
A.~Grigor'yan, J.X.~Hu, and K.S.~Lau, \emph{Heat kernels on metric spaces}, in: Geometry and Anaysis of Fractals, Springer Proc. Math. Stat., vol.~88, Springer, Heidelberg, 2014, pp.~147--207.

\bibitem{GHL4}
A.~Grigor'yan, J.X.~Hu, and K.S.~Lau, \emph{Generalized capacity, {H}arnack inequality and heat kernels of
  {D}irichlet form on metric measure spaces}, J. Math. Soc. Japan \textbf{67}
  (2015), 1--65.

\bibitem{GuL}
Q.S.~Gu and K.S.~Lau, \emph{Dirichlet forms and critical exponents on fractals}, arXiv:1703.07061.

\bibitem{HK}
J.X.~Hu and T.~Kumagai, \emph{Nash-type inequalities and heat kernels for
  non-local {D}irichlet forms}, Kyushu J. Math. \textbf{60} (2006), 245--265.

\bibitem{Jo}
A.~Jonsson, \emph{Brownian motion on fractals and function spaces}, Math. Zeit. \textbf{222} (1996), 495--504.

\bibitem{JW}
A.~Jonsson and H.~Wallin, \emph {Function Spaces on Subsets of ${\mathbb R}^n$}, Math. Rep., vol.~2, Part 1, Harwood Academic Publ., 1984.

\bibitem{Ka}
V.~Kaimanovich, \emph{Random walks on {S}ierpi\'{n}ski graphs: hyperbolicity and stochastic homogenization}, in: Fractals in Graz 2001, in: Trends Math., Birkh{\"a}user, Basel, 2003, pp.~145--183.

\bibitem{Ki1}
J.~Kigami, \emph{Analysis on Fractals}, Cambridge Tracts in Mathematics, vol.~143, Cambridge University Press, Cambridge, 2001.

\bibitem{K}
S.L.~Kong, \emph{Random walks and induced {D}irichlet forms on self-similar
  sets}, Ph.D. thesis, The Chinese University of Hong Kong, 2017.

\bibitem{KLW}
S.L.~Kong, K.S.~Lau, and T.K.L. Wong, \emph{Random walks and induced
  {D}irichlet forms on self-similar sets}, Adv. Math. \textbf{320} (2017), 1099--1134.

\bibitem{Ku}
T.~Kumagai, \emph{Estimates of transition densities for Brownian motion on nested fractals}, Probab. Theory Related Fields \textbf{96} (1993), 205--224.

\bibitem{LW1}
K.S.~Lau and X.Y.~Wang, \emph{Self-similar sets as hyperbolic boundaries},
  Indiana Univ. Math. J. \textbf{58} (2009), 1777--1795.

\bibitem{LW2}
K.S.~Lau and X.Y.~Wang, \emph{Denker-{S}ato type {M}arkov chains on self-similar sets}, Math.
  Zeit. \textbf{280} (2015), 401--420.

\bibitem{LP}
R.~Lyons and Y.~Peres, \emph{Probability on Trees and Networks}, Cambridge Series in Statistical and Probabilistic Mathematics, vol.~42, Cambridge University Press, Cambridge, 2016.

\bibitem{P1}
K.~Pietruska-Pa{\l}uba, \emph{Some function spaces related to the {B}rownian
  motion on simple nested fractals}, Stochastics and Stochastics Reports
  \textbf{67} (1999), 267--285.

\bibitem{P2}
K.~Pietruska-Pa{\l}uba, \emph{On function spaces related to fractional diffusions on
  $d$-sets}, Stochastics and Stochastics Reports \textbf{70} (2000), 153--164.

\bibitem{Si}
M.~Silverstein, \emph{Classification of stable symmetric {M}arkov chains},
  Indiana Univ. Math. J. \textbf{24} (1974), 29--77.

\bibitem{So}
P.~Soardi, \emph{Potential Theory on Infinite Networks}, Lecture Notes in Math., vol.~1590, Springer, Heidelberg, 1994.

\bibitem{St}
A.~St{\'o}s, \emph{Symmetric $\alpha$-stable processes on $d$-sets}, Bull.
  Polish Acad. Sci. Math. \textbf{48} (2000), 237--245.

\bibitem{TKV}
A.~Tetenov, K.~Kamalutdinov, and D.~Vaulin, \emph{Self-similar {J}ordan arcs which do not satisfy {OSC}}, arXiv:1512.00290.

\bibitem{Wo1}
W.~Woess, \emph{Random Walks on Infinite Graphs and Groups}, Cambridge Tracts in Mathematics, vol.~138, Cambridge University Press, Cambridge, 2000.

\end{thebibliography}
\end{document}